\newtheorem{theorem}{Theorem}[section]
\newtheorem*{theorem*}{Theorem (Lehmann--Scheff\'e)}
\newtheorem{lemma}[theorem]{Lemma}
\newtheorem{corollary}[theorem]{Corollary}
\newtheorem{proposition}[theorem]{Proposition}
\theoremstyle{definition}
\newtheorem{remark}[theorem]{Remark}
\theoremstyle{definition} 
\newtheorem*{remark*}{Remark}
\numberwithin{equation}{section}
\newcommand{\R}{\mathbb{R}}
\newcommand{\N}{\mathbb{N}}
\newcommand{\CC}{\mathbb{C}}
\renewcommand{\H}{\mathcal{H}}
\newcommand{\T}{\mathcal{T}}
\newcommand{\tga}{\tilde\gamma}
\newcommand{\trho}{\tilde\rho}
\newcommand{\ffrown}{\text{\raisebox{3pt}[0pt][0pt]{$\frown$}}}
\renewcommand{\O}{\underset{\ffrown}{<}}
\newcommand{\OG}{\underset{\ffrown}{>}}
\newcommand{\OO}{\mathrel{\text{\raisebox{2pt}{$\O$}}}}
\newcommand{\OOG}{\mathrel{\text{\raisebox{2pt}{$\OG$}}}}
\newcommand{\al}{\alpha}
\newcommand{\be}{\beta}
\newcommand{\ga}{\gamma}
\newcommand{\de}{\delta}
\newcommand{\De}{\Delta}
\newcommand{\vp}{\varepsilon}
\newcommand{\ka}{\kappa}
\newcommand{\la}{\lambda}
\newcommand{\La}{\Lambda}
\newcommand{\si}{\sigma}
\newcommand{\Si}{\Sigma}
\renewcommand{\th}{\theta}
\newcommand{\om}{\omega}
\newcommand{\kk}{\mathbf{k}}
\newcommand{\uu}{\mathbf{u}}
\newcommand{\dd}{\operatorname{d}\!}
\newcommand{\erf}{\operatorname{erf}}
\newcommand{\ii}{\operatorname{I}}
\newcommand{\Alt}{\mathsf{Alt}}
\newcommand{\EM}{\mathsf{EM}}
\newcommand{\Ra}{\mathsf{Ra}}
\newcommand{\gaeu}{\operatorname{\ga_{\,\mathsf{Eu}}}}
\newcommand{\der}{\mathsf{der}}
\newcommand{\ev}{\mathsf{ev}}
\newcommand{\od}{\mathsf{od}}
\newcommand{\tT}{\tilde T}
\newcommand{\pd}[2]{\frac{\partial^{#1}}{{\partial#2\,}^{#1}}}
\newcommand{\fd}[2]{\frac{\dd^{#1}}{{\dd#2\,}^{#1}}}
\newcommand{\ce}{\centering}
\begin{document}

\title[Approximating sums by integrals only]{An alternative to the Euler--Maclaurin formula: \\ Approximating sums by integrals only}



\author{Iosif Pinelis}
\address{Department of Mathematical Sciences, Michigan Technological University}
\email{ipinelis@mtu.edu}

\subjclass[2010]{Primary 
41A35; secondary 26D15, 
40A05, 40A25, 
41A10, 41A17, 41A25, 41A55, 41A58, 41A60, 41A80, 
65B05, 65B10, 65B15, 65D10, 65D30, 65D32, 
68Q17
}

%
%
%

 	
\keywords{Euler--Maclaurin summation formula, 
sums, series, integrals, derivatives, approximation, inequalities, divergent series}

\begin{abstract} 
The Euler--Maclaurin (EM) summation formula is used in many theoretical studies and numerical calculations. It approximates the sum 
$\sum_{k=0}^{n-1} f(k)$ of values of a function $f$ by a linear combination of a corresponding integral of $f$ and values of its higher-order derivatives $f^{(j)}$. 
An alternative (Alt) summation formula is proposed, which approximates the sum by a linear combination of integrals only,  
without using high-order derivatives of $f$. 
Explicit and rather easy to use bounds on the remainder are given. 
Possible extensions to multi-index summation are suggested. 
Applications to summing possibly divergent series are presented. 
It is shown that the Alt formula will in most cases outperform, or greatly outperform, the EM formula in terms of the execution time and memory use. One of the advantages of the Alt calculations is that, in contrast with the EM ones, they can be almost completely parallelized.  
Illustrative examples are given. 
In one of the examples, where an array of values of the Hurwitz generalized zeta function is computed with high accuracy, it is shown that both our implementation of the EM formula and, especially, the Alt formula perform much faster than the built-in Mathematica command \texttt{HurwitzZeta[]}. 
\end{abstract}

\maketitle

\setcounter{tocdepth}{1}
\tableofcontents

\section{Introduction}\label{intro}

The Euler--Maclaurin (EM) summation formula
can be written as follows: 
\begin{equation*}\label{eq:EMintro}
	\sum_{k=0}^{n-1} f(k)\approx 
	\int^n_0\dd x\, f(x)  
    +
    \sum_{j=1}^{2m-1}\frac{B_j}{j!}[f^{(j-1)}(n) - f^{(j-1)}(0)], \tag{EM}      
\end{equation*} 
where $f$ is a smooth enough function, 
$B_{j}$ is the $j$th Bernoulli number, and $n$ and $m$ are natural numbers. The remainder/error term of the approximation provided by this formula has a certain explicit integral expression, which depends linearly on $f^{(2m-1)}$; in particular, the EM approximation is exact when $f$ is a polynomial of degree $<2m-1$. 
The integral $\int^n_0\dd x\, f(x)$ in \eqref{eq:EMintro} may be considered the main term of the approximation, whereas the summands $\frac{B_j}{j!}[f^{(j-1)}(n) - f^{(j-1)}(0)]$ may be referred to as the correction terms.  
 
The EM formula has been used 
in a large number of theoretical studies and numerical calculations, even outside mathematical sciences -- see e.g.\ applications of this formula in problems of renormalization in
quantum electrodynamics and quantum field theory \cite[\S2.15.6]{kleinert} and \cite[\S3.1.3]{sadovskii}. 
The EM formula is 
used, in particular, for numerical calculations of sums in such mathematical software packages as Mathematica (command \verb!NSum[]! with option \verb!Method->"EulerMaclaurin"!), Maple (command \verb!eulermac()!\,), PARI/GP (command 
\verb!sumnum()!), 
and Matlab (\verb!numeric::sum()!). EM is also used internally in other built-in commands in such software packages requiring calculations of sums. 
%

Clearly, to use the EM formula in a theoretical or computational study, one will usually need to have an  antiderivative $F$ of $f$ and the derivatives $f^{(j-1)}$ for $j=1,\dots,2m-1$ in tractable or, respectively, computable form. 

In this paper, an alternative summation formula (Alt) is offered, which approximates the sum $\sum_{k=0}^{n-1} f(k)$ by a linear combination of values of an antiderivative $F$ of $f$ only, without using values of any derivatives of $f$: 
\begin{equation*}\label{eq:intro}
	\sum_{k=0}^{n-1} f(k)\approx 
	\sum_{j=1-m}^{m-1}\tau_{m,1+|j|}\,\int_
	{-1/2-j/2}^{n-1/2-j/2}\dd x\, f(x), \tag{Alt}  
\end{equation*}
where $f$ is again a smooth enough function, 
the coefficients $\tau_{m,r}$ are certain rational numbers not depending on $f$ and such that $\sum_{j=1-m}^{m-1}\tau_{m,1+|j|}=1$, and $n$ and $m$ are natural numbers. 
Similarly to the case of the EM formula, the remainder/error term of the approximation provided by the Alt formula has a certain explicit integral expression, which depends linearly on $f^{(2m)}$; in particular, the Alt approximation is exact when $f$ is a polynomial of degree $<2m$. 

Even though the exact expressions of the remainders in the Alt and EM formulas involve higher-order derivatives $f^{(2m-1)}$ and $f^{(2m)}$ of the function $f$, such derivatives need not be computed to compute tight enough bounds on the remainders in typical applications, where 
the function $f$ has rather natural analyticity properties; in such cases, in view of Cauchy's integral formula, it is enough to have appropriate bounds just on the function $f$ itself. 

The coefficients $\tau_{m,r}$ in the Alt formula are significantly easier to compute than the Bernoulli numbers $B_j$, which are the coefficients in the EM formula. Moreover, it will be shown (see \eqref{eq:=B_p}) that the Bernoulli numbers $B_j$ can be expressed as certain linear combinations of $\tau_{m,r}$'s. 

As was noted, to use either the EM formula or the Alt one, one needs to have an antiderivative $F$ of $f$ in tractable/computable form anyway. Then $f$ (being the derivative of $F$) will usually be of complexity no less than that of $F$. On the other hand, the complexity of higher-order derivatives of $f$ will usually be much greater than that of $F$ or $f$. This is the main advantage of the Alt summation formula over the EM one. 

Closely related to this is another important advantage of Alt over EM, to be detailed in Subsection~\ref{par}: the Alt calculations can be almost completely parallelized, whereas the calculation of the derivatives $f^{(j-1)}$ for $j=1,\dots,2m-1$ is non-parallelizable -- except for a few special cases when these derivatives are available in simple closed form, rather than only recursively. 

Even in such special cases, comparatively least favorable for the Alt formula, it will outperform the EM one when the needed accuracy is high enough. As explained in Subsection~\ref{euler}, this will happen because of the comparatively large time needed to compute the Bernoulli numbers. 

Summarizing these considerations, we see that the Alt formula should be usually expected to outperform the EM one.  
Alt's advantage will usually be the greater, the greater accuracy of the result is required -- because then one will need to use a greater number $\asymp m$ of correction terms $\frac{B_j}{j!}[f^{(j-1)}(n) - f^{(j-1)}(0)]$, and thus a higher order of the derivatives of $f$ in the EM formula will be needed. 
In Section~\ref{examples}, we shall consider specific examples illustrating such general expectations for high-accuracy calculations.
\big(Here and in what follows, for any two positive expressions $a$ and $b$, we write $a\asymp b$ to mean that $a\OO b\ \&\ a\OOG b$; in turn,
$a\OO b$ means $a\le Cb$ for some 
constant $C$, and $a\OOG b$ means $b\OO a$. We also write $a\sim b$ for $a/b\to1$.\big) 

Concerning very high accuracy, one may ask -- as it was done, rhetorically,  
in the Foreword by D.~H.~Bailey to \cite{100digit}: 
``[...] why should anyone care about finding any answers to 10,000-digit
accuracy?'' 
An answer to this question was given in the same Foreword: 
``[...] recent work in experimental mathematics has provided an important venue
where numerical results are needed to very high numerical precision, in some cases to
thousands of decimal digits. In particular, precision on this scale is often required when
applying integer relation algorithms to discover new mathematical identities. [...] 
Numerical quadrature (i.e., numerical evaluation of definite integrals), series
evaluation, and limit evaluation, each performed to very high precision, are particularly
important. These algorithms require multiple-precision arithmetic, of course, but often also
involve significant symbolic manipulation and considerable mathematical cleverness as well.''
The footnote on the same page in \cite{100digit} 
adds this information: ``Such algorithms were ranked among \emph{The Top 10} [...]
of algorithms `with the greatest influence on the development and practice of science and engineering in the 20th
century.' '' 
See also \cite{bailey-borwein} for some specifics on this, as well as  
\cite{bornemann} for a recent update on  \cite{100digit} -- where, in particular, an example is cited, which is now part of Mathematica's documentation, stating that the largest positive root of Riemann's prime counting function $R$ is ``shocking[ly]'' small, about $1.83\times10^{-14828}$. 


The EM formula was introduced about $282$ years ago. Accordingly, a large body of literature has been produced with further developments in the theory and applications of this tool. 
In contrast, the Alt summation formula appears to have no precedents in the literature. The idea behind the formula and techniques used in its proof also appear to be new.  
Given the mostly superior performance of the Alt formula in calculations of sums and the 
already uncovered theoretical connections with the EM formula, 
new developments in the theory of the Alt formula and its applications do not seem unlikely, and they are  certainly welcome. 

\medskip

The rest of this paper is organized as follows. 

In Section~\ref{EU}, a rigorous review of the EU formula is given, with an application to the Faulhaber formula for the sums of powers, to be subsequently used in the paper. 

In Section~\ref{result}, the Alt formula is rigorously stated, with discussion. The main idea behind this formula is presented. The mentioned expression of the Bernoulli numbers (which are the coefficients in the EM formula) in terms of the coefficients in the Alt formula is stated as well. Possible extensions of the Alt formula to the case of multi-index sums are suggested. 

In Section~\ref{R bounds}, explicit and rather easy to use bounds on the remainders in the Alt and EM formulas are presented, especially in the case when the function $f$ has rather natural analyticity properties. 

In Section~\ref{series}, applications of the Alt and EM formulas to summing possibly divergent series are given. It is shown that the corresponding generalized sums, $\sum_{k\ge0}^\Alt f(k)$ and $\sum_{k\ge0}^\EM f(k)$ are equal to each other, and that they both differ from the Ramanujan sum $\sum_{k\ge0}^\Ra f(k)$ by the additive constant $F(0)$, where $F$ is the chosen antiderivative of $f$. 
A simple shift trick is presented, which allows one to make the remainders in the Alt and EM formulas arbitrarily small; this trick is an extension of the method used by Knuth~\cite{knuth62} to compute an approximation to the value of the Euler constant. 

In Section~\ref{c,m}, it is discussed how to choose the number $m$ ``of the correction terms'' in the Alt and EM formulas and the value (say $c$) of the shift mentioned in the previous paragraph -- in order to obtain the desired number $d$ of digits of accuracy of the (generalized) sum in a nearly optimal time. This is significantly more difficult to do for the EM formula, mainly because it is difficult to assess, especially in general terms, the time needed to compute the values of the higher-order derivatives of $f$. 

A general and rather straightforward way to parallelize the calculations by the Alt formula is detailed in Section~\ref{par}. The matter of memory use is also considered there. In most cases, the Alt calculations will require substantially less memory than the corresponding EU ones. 

To illustrate the above comparisons between the Alt and EM formulas' performance, 
four specific examples of a function $f$ are considered in Section~\ref{examples}, with various levels of the complexity of $f$, its antiderivative $F$, and its higher-order derivatives $f^{(j-1)}$. Measurements of the execution time and memory use for the Alt and EM formulas are presented there for various values of the desired number $d$ of digits of accuracy. Similar measurements are also presented for the Richardson extrapolation process (REP) in the two of the examples where this kind of extrapolation seems applicable. 
It appears that the REP cannot compete with either the EM or Alt formula as far as high-accuracy calculations of sums are concerned. 
It also turns out, as shown in the example in Subsection~\ref{zeta}, where an array of values of the Hurwitz generalized zeta function is computed, that both our implementation of the EM formula and, especially, the Alt formula significantly outperform the built-in Mathematica command \texttt{HurwitzZeta[]} in terms of the execution time, which suggests that our code is rather well optimized in that respect. 

One should note here that the mentioned comparisons of the performance of the Alt summation with the EM one and the REP concern only problems of summation. Of course, the REP has many other uses in which the Alt formula is not applicable at all. Also, the EM formula may be used to approximate integrals by sums, which cannot be done in general with the Alt formula.  


The necessary proofs are deferred to Section~\ref{proof}. 

\medskip

At the end of this introduction, let us fix the notation to be used in the rest of the paper:  
For any natural number $\al$, let $C^{\al-}$ denote the set of all functions $f\colon\R\to\R$ such that $f$ has continuous derivatives $f^{(i)}$ of all orders $i=0,\dots,\al-1$ and the derivative $f^{(\al-1)}$ is absolutely continuous, with a Radon--Nikodym derivative denoted here simply by $f^{(\al)}$. 
As usual, $f^{(0)}:=f$. 

Suppose that $n$ is a nonnegative integer, $m$ is a natural number, and $f\in C^{2m-}$.

\section{
The EM summation formula}\label{EU}
Here is an exact form of the formula \eqref{eq:EMintro} stated in the Introduction  
(see e.g.\ 
\cite{knopp51
}): 
\begin{equation}\label{eq:EM}
\begin{aligned}
	\sum_{k=0}^{n-1} f(k) =A^\EM_m + R^\EM_m, 
\end{aligned}    
\end{equation}
where 
\begin{equation}\label{eq:A^EM}
	   A^\EM_m:= \int^{n-1}_0\!\!\!\dd x\, f(x) 
    + \frac{f(n-1) + f(0)}2  
    +
    \sum_{j=1}^{m-1}\frac{B_{2j}}{(2j)!}[f^{(2j - 1)}(n-1) - f^{(2j - 1)}(0)],   
\end{equation} 
$B_{j}$ is the $j$th Bernoulli number, $R^\EM_m$ is the remainder given by the formula 
\begin{equation}\label{eq:R^EM}
	R^\EM_m:=\frac1{(2m-1)!}\,\int_0^{n-1} \dd x\,f^{(2m-1)}(x)\,
	B_{2m-1}(x-\lfloor x\rfloor),  
\end{equation}
and $B_j(x)$ is the $j$th Bernoulli polynomial, defined recursively by the conditions 
$B_0(x)=1$, $B_j'(x)=jB_{j - 1}(x)$, and $\int_0^1 \dd x\,B_j(x) = 0$ for $j=1,2,\dots$ and real $x$. 
In particular, for all $j=2,3,\dots$ the $j$th Bernoulli number coincides with the value of the $j$th Bernoulli polynomial at $0$: $B_j=B_j(0)$.  
Here and in what follows, we assume the standard convention, according to which the sum of an empty family is $0$. 
So, the sum in \eqref{eq:A^EM} equals $0$ if $m=1$. 
It is known that for all real $x\in[0,1]$ 
\begin{equation*}
	|B_{2m-1}(x)|\le\frac{2(2m-1)!}{(2\pi)^{2m-1}}\,\zeta(2m-1);    
\end{equation*}
see e.g.\ \cite[page~525]{knopp51}.  
Therefore, 
\begin{equation}\label{eq:R^EM<}
	|R^\EM_m|\le\frac{2\zeta(2m-1)}{(2\pi)^{2m-1}}\,\int_0^{n-1} \dd x\,|f^{(2m-1)}(x)|. 
\end{equation}
Here $\zeta$ is the Riemann zeta function, so that 
$\zeta(2m-1)<1.01$ for $m\ge4$.  

In \cite{butzer-etal11}, it is shown that the Abel-Plana summation formula, the Poisson summation formula, and the approximate sampling formula are in a certain sense equivalent to the EM summation formula.   

For $m=0$, the Euler--MacLaurin formula takes the form 
$$\sum_{k=0}^{n-1} f(k) =
    \int^{n-1}_0\dd x\, f(x)+ \frac{f(n-1) + f(0)}2+R^\EM_0.$$ 
Therefore, the general formula \eqref{eq:EM} can be viewed as a higher-order extension of the trapezoidal quadrature formula.   

One may note that the Euler--MacLaurin formula implies the Faulhaber formula for the sums of powers. Indeed, take any natural $p$ and $n$. 
Then, 
using \eqref{eq:EM} with $f(x)=x^p$, $m=\lceil p/2\rceil+1$, and $n$ in place of $n-1$, and recalling that $B_3=B_5=\dots=0$, one has $f^{(2m-1)}=0$ and hence $R^\EM_m=0$ and 
\begin{equation*}
	\sum_{k=0}^{n-1}k^p=-n^p+\sum_{k=0}^{n}k^p=\frac{n^{p+1}}{p+1}-\frac{n^p}2
	+\frac1{p+1}\,\sum_{\al=2}^p
	B_\al \binom{p+1}\al n^{p+1-\al};    
\end{equation*}
here we also use the simple observation that $f^{(p)}(n) - f^{(p)}(0)=0$ if $f(x)=x^p$. 
Therefore and because $B_0=1$ and $B_1=-1/2$, we have the following version of the Faulhaber formula:    
\begin{equation}\label{eq:faulhaber}
	\sum_{k=0}^{n-1}k^p=\frac1{p+1}\,\sum_{\al=0}^p B_\al \binom{p+1}\al n^{p+1-\al},   
\end{equation}
for any natural $p$ and $n$; 
this conclusion holds for any nonnegative integers $p$ and $n$ as well -- assuming the convention $0^0:=1$, which will be indeed assumed throughout this paper. 
We shall use \eqref{eq:faulhaber} in the proof of Proposition~\ref{lem:=B_p}. 

\section{
An alternative (Alt) to the EM formula}\label{result}

%
The following is the main result of this paper: 

\begin{theorem}\label{th:}
%
One has 
\begin{equation}\label{eq:}
	\sum_{k=0}^{n-1}f(k)
	=A_m-R_m,
\end{equation}

\newcommand{\bx}[2]{\makebox[#1in][l]{$\displaystyle{#2}$}}

where 
\begin{align}
	A_m&:=\bx{1.6}{\sum_{j=1}^m\ga_{m,j}\sum_{i=0}^{j-1}\int_{i-j/2}^{n-1+j/2-i}}
	\bx{1.5}{=\sum_{j=1}^m\ga_{m,j}\sum_{i=0}^{j-1}\int_{-1+j/2-i}^{n-1+j/2-i} } 
	\label{eq:A_m} \\ 
	&=\bx{1.6}{\sum_{\al=1-m}^{m-1}\tau_{m,1+|\al|}\,\int_{\al/2-1/2}^{n-1/2-\al/2} }
		\bx{1.7}{=\sum_{\al=1-m}^{m-1}\tau_{m,1+|\al|}\,\int_{-1/2-\al/2}^{n-1/2-\al/2} }
	\label{eq:A_m,alt1} \\ 
	&
	\begin{aligned}
	=&\bx{3.5}{\;\tau_{m,1}\,\int_{-1/2}^{n-1/2}
	+\sum_{\al=1}^{m-1}\tau_{m,1+\al}\,
	\Big(\int_{\al/2-1/2}^{n-1/2-\al/2}
	+\int_{-\al/2-1/2}^{n-1/2+\al/2}
	\Big) }\\ 
		=&\bx{3.5}{\;\tau_{m,1}\,\int_{-1/2}^{n-1/2}
	+\sum_{\al=1}^{m-1}\tau_{m,1+\al}\,
	\Big(\int_{-1/2-\al/2}^{n-1/2-\al/2}
	+\int_{-1/2+\al/2}^{n-1/2+\al/2} 
	\Big)	} 
	\end{aligned} \label{eq:A_m,alt2}
\end{align}  
is the integral approximation to the sum $\sum_{k=0}^{n-1}f(k)$, 
\begin{equation}\label{eq:int}
	\int_a^b:=\int_a^b \dd x\,f(x):=F(b)-F(a),   
\end{equation}
$F$ is any antiderivative of $f$ \big(so that $\int_a^b=\int_{(a,b]}$\, if $a\le b$\big),  
\begin{equation}\label{eq:ga_j}
	\ga_{m,j}:=(-1)^{j-1}\,\frac2j\,\binom{2m}{m+j}\Big/ \binom{2m}{m}, 
\end{equation}
\begin{equation}\label{eq:tau_j}
	\tau_{m,r}:=
	\sum_{\be=0}^{\lfloor m/2-r/2\rfloor}\ga_{m,r+2\be}
	\,=\sum_{\be=0}^\infty\ga_{m,r+2\be}, 
\end{equation}
and 
$R_m$ is the remainder given by the formula 
\begin{equation}\label{eq:R_m}
	R_m
	 :=\frac1{(2m-1)!\,2^{2m+1}}\,\int_0^1\dd s\,(1-s)^{2m-1}\int_{-1}^{1}\dd v\,v^{2m}
	\sum_{j=1}^m\ga_{m,j}j^{2m+1} \sum_{k=0}^{n-1}f^{(2m)}(k+jsv/2). 
\end{equation}
The sum of all the coefficients of the integrals in each of the expressions in \eqref{eq:A_m}, \eqref{eq:A_m,alt1}, and \eqref{eq:A_m,alt2} of $A_m$ is   
\begin{equation}\label{eq:sum ga_j}
	\sum_{j=1}^m\ga_{m,j}\sum_{i=0}^{j-1}1=\sum_{j=1}^m\ga_{m,j} j
	=\sum_{\al=1-m}^{m-1}\tau_{m,1+|\al|}=1. 
\end{equation}

If $M_{2m}$ is a real number such that 
\begin{equation}\label{eq:<M}
	\Big|\sum_{k=0}^{n-1}f^{(2m)}(k+w)\Big|\le M_{2m}\quad\text{for all}\quad w\in[-m/2,m/2], 
\end{equation}
then the remainder $R_m$ can be bounded as follows: 
\begin{align}
	|R_m|&\le\frac{M_{2m}}{(2m+1)!\,2^{2m}}\,\sum_{j=1}^m|\ga_{m,j}|j^{2m+1}.  \label{eq:|R|<} 
\end{align}
\end{theorem}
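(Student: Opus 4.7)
The plan is to first show that the various expressions for $A_m$ in \eqref{eq:A_m} are equivalent to the ``localized'' form
\[
A_m=\sum_{k=0}^{n-1}\sum_{j=1}^m\ga_{m,j}\int_{-j/2}^{j/2}\dd u\,f(k+u),
\]
then use Taylor's theorem to pin down the $\ga_{m,j}$'s from the requirement that the inner $j$-sum reproduce $f(k)$ modulo $f^{(2m)}$, and finally read off $R_m$, the bound, and the reformulations in terms of the $\tau$'s. For the first reduction, the two displays on the top line of \eqref{eq:A_m} agree via the reindexing $i\mapsto j-1-i$ together with $\int_a^b=F(b)-F(a)$. Each length-$n$ integral $\int_{-1+j/2-i}^{n-1+j/2-i}f$ then splits into $n$ consecutive unit integrals, and the union over $i=0,\dots,j-1$ of the resulting shifted unit intervals tiles $[k-j/2,\,k+j/2]$ for every $k\in\{0,\dots,n-1\}$; this yields the displayed identity.

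Next, by the integral form of Taylor's theorem,
\[
f(k+u)=\sum_{i=0}^{2m-1}\frac{u^i}{i!}\,f^{(i)}(k)+\frac{u^{2m}}{(2m-1)!}\int_0^1\dd s\,(1-s)^{2m-1}f^{(2m)}(k+us).
\]
Integrating over $u\in[-j/2,j/2]$, odd powers of $u$ vanish, and the even ones contribute $\tfrac{2(j/2)^{2l+1}}{(2l+1)!}f^{(2l)}(k)$. For $\sum_{j=1}^m\ga_{m,j}$ of the polynomial piece to collapse to $f(k)$ alone, one needs the moment relations $\sum_{j=1}^m\ga_{m,j}\,j^{2l+1}=\delta_{l,0}$ for $l=0,\dots,m-1$. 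I would verify these for \eqref{eq:ga_j} via the generating-function identity
\[
(e^x-e^{-x})^{2m}=(-1)^m\sum_{j=-m}^{m}(-1)^j\binom{2m}{m+j}e^{2jx}.
\]
Since the left-hand side equals $(2\sinh x)^{2m}=O(x^{2m})$, equating coefficients of $x^{2l}$ for $0\le l<m$ and using the symmetry $j\mapsto -j$ gives $\sum_{j=1}^m(-1)^{j-1}\binom{2m}{m+j}j^{2l}=\tfrac12\binom{2m}{m}\,\delta_{l,0}$, which, after multiplying by $2/(j\binom{2m}{m})$, is exactly the moment relation. The $l=0$ instance is $\sum_j\ga_{m,j}\,j=1$, which together with the reindexing in the last paragraph delivers \eqref{eq:sum ga_j}. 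This combinatorial identity is the only delicate ingredient; the rest is bookkeeping.

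With the moment relations in place, substituting $u=jv/2$ ($v\in[-1,1]$) in the Taylor remainder, multiplying by $\ga_{m,j}$, and summing over $j$ and $k$ yields
\[
R_m=A_m-\sum_{k=0}^{n-1}f(k)=\frac{1}{(2m-1)!\,2^{2m+1}}\int_0^1\dd s\,(1-s)^{2m-1}\int_{-1}^1\dd v\,v^{2m}\sum_{j=1}^m\ga_{m,j}j^{2m+1}\sum_{k=0}^{n-1}f^{(2m)}(k+jsv/2),
\]
which is \eqref{eq:R_m}. The bound \eqref{eq:|R|<} then follows by noting $jsv/2\in[-m/2,m/2]$, pulling $M_{2m}$ outside via \eqref{eq:<M}, and evaluating $\int_0^1(1-s)^{2m-1}\dd s=1/(2m)$ together with $\int_{-1}^1 v^{2m}\dd v=2/(2m+1)$.

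Finally, \eqref{eq:A_m,alt1} and \eqref{eq:A_m,alt2} are reindexings. The bijection $(j,i)\leftrightarrow(\al,j)$ defined by $\al:=2i-j+1$ on $\{(j,i):1\le j\le m,\ 0\le i\le j-1\}$ has image $\{(\al,j):|\al|\le m-1,\ j\ge 1+|\al|,\ j\equiv 1+|\al|\pmod 2,\ j\le m\}$, and it converts $\int_{-1+j/2-i}^{n-1+j/2-i}$ into $\int_{\al/2-1/2}^{n-1/2+\al/2}$. Collecting the $\ga_{m,j}$'s over the admissible $j$'s for fixed $\al$ gives precisely $\tau_{m,1+|\al|}$ as in \eqref{eq:tau_j}. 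The second form in \eqref{eq:A_m,alt1} differs from the first by a quantity odd in $\al$ (after $\al\mapsto-\al$), while $\tau_{m,1+|\al|}$ is even in $\al$, so the two forms coincide upon summation; \eqref{eq:A_m,alt2} is just a regrouping of the $\al=0$ and $\pm\al$ contributions.
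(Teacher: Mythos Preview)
Your proof is correct and follows essentially the same architecture as the paper's: Taylor-expand $f(k+u)$, integrate over $u\in[-j/2,j/2]$, use the moment relations $\sum_j\ga_{m,j}j^{2l+1}=\de_{l,0}$ to kill the lower-order terms, and show $\sum_k\int_{k-j/2}^{k+j/2}$ matches the double sum in \eqref{eq:A_m} by a tiling/telescoping argument. The one substantive variation is your proof of the moment relations via the generating function $(e^x-e^{-x})^{2m}$; the paper instead observes that $(\De^{2m}\psi_\al)(0)=0$ for $\psi_\al(z)=z^{2\al}$ with $\De$ the symmetric difference operator. These are equivalent devices (both encode that the relevant polynomial has degree $<2m$), and neither is simpler than the other.

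Two small slips to fix. First, in your reindexing paragraph, the substitution $\al=2i-j+1$ gives $j/2-i=(1-\al)/2$, so $\int_{-1+j/2-i}^{n-1+j/2-i}$ becomes $\int_{-1/2-\al/2}^{\,n-1/2-\al/2}$ (the second form in \eqref{eq:A_m,alt1}), not $\int_{\al/2-1/2}^{\,n-1/2+\al/2}$ as you wrote; your subsequent parity argument for the equivalence of the two forms in \eqref{eq:A_m,alt1} is nonetheless correct. Second, the phrase ``after multiplying by $2/(j\binom{2m}{m})$'' is imprecise since $j$ is the summation index; what you mean is that inserting the definition \eqref{eq:ga_j} into the combinatorial identity yields the moment relation directly.
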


Recall the convention that the sum of an empty family is $0$. 
In particular, if $n=0$, then $\sum_{k=0}^{n-1}f(k)=0 
=A_m=R_m$.  

A formal proof of Theorem~\ref{th:} will be given in 
Section~\ref{proof}. At this point, let us just present the idea leading to representation \eqref{eq:}. 
First here, one may note that, in accordance with \eqref{eq:}--\eqref{eq:tau_j}, the first approximation $A_1=\int_{-1/2}^{n-1/2}$ of the sum $\sum_{k=0}^{n-1}f(k)$ is 
obtained by approximating each summand $f(k)$ by $\int_{k-1/2}^{k+1/2}$.  
Next, formally integrating the Taylor expansion 
\begin{equation}\label{eq:taylor}
f(x)=f(k)+f'(k)(x-k)+f''(k)(x-k)^2/2+\cdots	
\end{equation}
in $x$ from $k-1/2$ to $k+1/2$ and then summing in $k=0,\dots,n-1$, one sees that 
\begin{equation}\label{eq:int1}
	\sum_{k=0}^{n-1}f(k)=\int_{-1/2}^{n-1/2}-\sum_{k=0}^{n-1}f''(k)/24-\cdots,  
\end{equation}
with the ellipsis $\cdots$ standing for a ``negligible'' remainder.  
Integrating now the Taylor expansion \eqref{eq:taylor} in $x$ from $k-1$ to $k+1$ and then summing again in $k=0,\dots,n-1$, one has 
\begin{equation}\label{eq:int2}
	\sum_{k=0}^{n-1}f(k)=\frac12\,\Big(\int_{-1}^n+\int_0^{n-1}\Big)-\sum_{k=0}^{n-1}f''(k)/6-\cdots,  
\end{equation}
because 
\begin{equation}\label{eq:sum-ints}
	\sum_{k=0}^{n-1}\int_{k-1}^{k+1}=\sum_{k=0}^{n-1}\Big(\int_{k-1}^k+\int_k^{k+1}\Big)
	=\int_{-1}^{n-1}+\int_0^n=\int_{-1}^n+\int_0^{n-1}. 
\end{equation}
Multiplying now both sides of the identities in \eqref{eq:int1} and \eqref{eq:int2} by $4/3$ and $-1/3$, respectively, and then adding the resulting identities, we eliminate the term containing the second derivatives $f''(k)$: 
\begin{equation}\label{eq:A_2-dots}
	\sum_{k=0}^{n-1}f(k)=\frac43\,\int_{-1/2}^{n-1/2}
	-\frac16\Big(\int_{-1}^n+\int_0^{n-1}\Big)-\cdots=A_2-\cdots,   
\end{equation}
where $A_m$ is as in \eqref{eq:A_m}--\eqref{eq:A_m,alt2}. The last identity, $\sum_{k=0}^{n-1}f(k)=A_2-\cdots$, is a non-explicit and non-rigorous version of identity \eqref{eq:} for $m=2$. 

Similarly eliminating higher-order derivatives in an explicit and rigorous version of the Taylor expansion and generalizing the calculations in \eqref{eq:sum-ints}, we shall derive identity \eqref{eq:} for all natural $m$.  

The idea described above may remind one the idea of the mentioned earier Richardson extrapolation process (REP), which results in an elimination of higher-order terms of an 
asymptotic expansion and thus in a higher rate of convergence; see e.g.\ \cite{richardson,sidi}. An important difference between the two ideas is that Richardson's elimination is iterative, in distinction with our representation \eqref{eq:}, and so, one has to be concerned with the stability of the REP and its generalizations; cf.\ e.g.\ \cite[Sections~0.5.2 and 1.6]{sidi}. 
Moreover, it will be shown in Section~\ref{par} that the computation by formula \eqref{eq:} can be almost completely parallelized, whereas the iterative character of Richardson's method makes that problematic, if at all possible. 
However, 
one may note the following. 

\begin{remark}\label{rem:ga}
Let 
\begin{equation}\label{eq:rho:=} 
	\rho_j:=\rho_{m,j}:=\ga_{m,j} j \text{\ \ for\ \ } j=1,\dots,m. 
\end{equation}
Then one has the (downward) recursion 
\begin{equation}\label{eq:rho iter}
	\rho_{m,j-1}=\rho_{m,j}\frac{m+j}{j-m-1} \text{ for } j=m,\dots,2,\quad\text{with}\quad
	\rho_{m,m}=(-1)^{m-1}2\Big/ \binom{2m}{m}. 
\end{equation}
Of course, this can be rewritten as an ``upward'' recursion. However, it is the downward recursion that will be used for parallelization, to be described in detail in Section~\ref{par}. 
\hfill\qed
\end{remark}

\begin{remark}\label{rem:intrs}
In each of the formulas \eqref{eq:A_m}, \eqref{eq:A_m,alt1}, and \eqref{eq:A_m,alt2}, the first expression is a linear combination of integrals over intervals centered at the point $(n-1)/2$, whereas the endpoints of each of the intervals corresponding to the second expression differ by $n$.\hfill\qed
\end{remark}

\begin{remark}\label{rem:A_m-alt} 
The expressions for $A_m$ in \eqref{eq:A_m,alt1} are obtained from those in \eqref{eq:A_m} by grouping the summands in the double sum with the same integral $\int_{i-j/2}^{n-1+j/2-i}$ or $\int_{-1+j/2-i}^{n-1+j/2-i}$ \big(and then the expressions for $A_m$ in the two lines of \eqref{eq:A_m,alt2} are obtained from the corresponding expressions in \eqref{eq:A_m,alt1} by grouping the summands with the same value of $\tau_{m,1+|\al|}$\big). So, each of the expressions in \eqref{eq:A_m,alt2} requires the calculation of $2m-1$ integrals, which is much fewer for large $m$ than  
the $(m+1)m/2$ integrals in \eqref{eq:A_m}.  
On the other hand, the coefficients $\ga_{m,j}$ in \eqref{eq:A_m} are a bit easier to compute than the coefficients $\tau_{m,j}$ in \eqref{eq:A_m,alt1} or \eqref{eq:A_m,alt2}. 
\hfill\qed
\end{remark}

\begin{remark}\label{rem:vect}
Instead of assuming that the function $f$ is real-valued, one may assume, more generally, that $f$ takes values in any normed space. In particular, one may allow $f$ to take values in the $q$-dimensional complex space $\CC^q$, for any natural $q$. Such a situation will be considered in the example in Subsection~\ref{zeta}. 
An advantage of dealing with a vector function such as the one defined by \eqref{eq:f,vect} (rather than separately with each of its components) is that this way one has to compute the coefficients -- say $\tau_{m,\be}$ in \eqref{eq:A_m,alt2} and $B_{2j}/(2j)!$ in \eqref{eq:A^EM} -- only once, for all the components of the vector function.  
\hfill\qed
\end{remark}

We have the following curious and useful identity, whereby the Bernoulli numbers $B_p$, which are the coefficients in the EM approximation \eqref{eq:A^EM}, are expressed as linear combinations of the coefficients $\ga_{m,j}$ and $\tau_{m,r}$ in the Alt approximation \eqref{eq:A_m}--\eqref{eq:A_m,alt2} to the sum $\sum_{k=0}^{n-1} f(k)$. 


\begin{proposition}\label{lem:=B_p}
Take any $p=0,\dots,2m-1$.  
Then  
\begin{equation}\label{eq:=B_p}
	B_p=\sum_{j=1}^m\ga_{m,j}\,\sum_{i=0}^{j-1}\Big(\frac j2-i-1\Big)^p
	=\frac1{2^p}\,\Big(\tau_{m,1}(-1)^p
	+\sum _{\be=2}^m \tau_{m,\be}\big[(\be-2)^p+(-\be)^p\big]\Big).   
\end{equation} 
\end{proposition}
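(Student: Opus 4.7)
The plan is to apply Theorem~\ref{th:} to the monomial $x\mapsto x^P$ for a conveniently chosen $P\in\{0,1,\dots,2m-1\}$; then $f^{(2m)}\equiv0$, so \eqref{eq:R_m} gives $R_m=0$ and \eqref{eq:} reduces to the polynomial identity $\sum_{k=0}^{n-1}k^P=A_m$ in $n$. Matching coefficients of $n^{P+1-p}$ on the two sides against the Faulhaber formula \eqref{eq:faulhaber} will then directly produce both expressions in \eqref{eq:=B_p}. Coefficient matching is legitimate because both sides of $\sum_{k=0}^{n-1}k^P=A_m$ are polynomials in $n$ of degree $P+1$ that agree at every nonnegative integer (both vanish at $n=0$, as noted right after Theorem~\ref{th:}), and hence coincide as polynomials.

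For the first equality in \eqref{eq:=B_p}, I would use the form \eqref{eq:A_m} of $A_m$ with antiderivative $F(x)=x^{P+1}/(P+1)$, obtaining
\begin{equation*}
A_m=\frac1{P+1}\sum_{j=1}^m\ga_{m,j}\sum_{i=0}^{j-1}\bigl[(n-1+j/2-i)^{P+1}-(i-j/2)^{P+1}\bigr].
\end{equation*}
The piece $(i-j/2)^{P+1}$ is independent of $n$, so it affects only the $n^0$ coefficient; for $p\in\{0,\dots,P\}$ the binomial expansion of $(n-1+j/2-i)^{P+1}$ supplies $\binom{P+1}{p}(j/2-i-1)^p$ as the $n^{P+1-p}$ coefficient. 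Equating with Faulhaber's coefficient $\tfrac1{P+1}B_p\binom{P+1}{p}$ and canceling $\binom{P+1}{p}/(P+1)$ (nonzero for $0\le p\le P$) yields the first equality of \eqref{eq:=B_p} for each such $p$; taking $P=2m-1$ covers the full range $p\in\{0,\dots,2m-1\}$.

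For the second equality, I would apply the same procedure to the second form of $A_m$ in \eqref{eq:A_m,alt2}. Reindexing that sum by $\be=1+\al$ (so $\al=\be-1$ and $\be$ ranges over $2,\dots,m$), the two integration intervals $[-1/2\mp\al/2,\,n-1/2\mp\al/2]$ simplify to $[-\be/2,\,n-\be/2]$ and $[(\be-2)/2,\,n+(\be-2)/2]$ respectively, while the $\tau_{m,1}$ term contributes $[(n-1/2)^{P+1}-(-1/2)^{P+1}]/(P+1)$. Evaluating on $f(x)=x^P$, expanding by the binomial theorem, and extracting the $n^{P+1-p}$ coefficient in $(P+1)A_m$ gives
\begin{equation*}
\binom{P+1}{p}\Bigl(\tau_{m,1}(-1/2)^{p}+\sum_{\be=2}^m\tau_{m,\be}\bigl[(-\be/2)^{p}+((\be-2)/2)^{p}\bigr]\Bigr),
\end{equation*}
and comparing with Faulhaber as above, then factoring out $2^{-p}$, yields the second equality of \eqref{eq:=B_p}. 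The argument is purely combinatorial once $\sum_{k=0}^{n-1}k^P=A_m$ is in hand; the only point requiring genuine care is the index bookkeeping under the substitution $\be=1+\al$ and the associated endpoint simplifications, so I do not anticipate any substantive obstacle.
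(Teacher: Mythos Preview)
The proposal is correct and follows essentially the same approach as the paper: apply Theorem~\ref{th:} to a monomial so that $R_m=0$, then compare coefficients of powers of $n$ with the Faulhaber formula~\eqref{eq:faulhaber}. The only minor differences are that the paper takes $f(x)=x^p$ directly (extracting the coefficient of $n^1$) rather than working with a general $P$, and it obtains the second equality in \eqref{eq:=B_p} by regrouping the first expression (as in the passage from \eqref{eq:A_m} to \eqref{eq:A_m,alt1}--\eqref{eq:A_m,alt2}) rather than by a separate coefficient-matching applied to \eqref{eq:A_m,alt2}; both routes are equivalent.
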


The last expression in \eqref{eq:=B_p} is obtained from the preceding one by grouping the summands in the double sum with the same value of $\frac j2-i-1$; cf.\ Remark~\ref{rem:A_m-alt}. 

Identity \eqref{eq:=B_p} will be used in the proofs of Propositions~\ref{prop:series} and \ref{prop:same}. 
In fact, this identity was discovered in numerical experiments suggesting the limit relation \eqref{eq:same} in Proposition~\ref{prop:same}.
The special case of \eqref{eq:=B_p} for $p=0$ is 
\eqref{eq:sum ga_j}.

Identities of a kind somewhat similar to \eqref{eq:=B_p} have been known. A survey of them was given in \cite{gould}. In particular, identity \cite[(1)]{gould} can be written as 
\begin{equation}\label{eq:gould}
	B_p=\sum _{j=0}^p \frac1{j+1}\sum _{i=0}^j (-1)^i \binom{j}{i} i^p
	=\sum _{j=0}^m \frac1{j+1}\sum _{i=0}^j (-1)^i \binom{j}{i} i^p
\end{equation}
for any $p=0,1,\dots$ and any $m=p,p+1,\dots$; the second equality in \eqref{eq:gould} holds because \break $\sum _{i=0}^j (-1)^i \binom{j}{i} i^p=0$ for all $j=p+1,p+2,\dots$. 

A notable distinction between identities \eqref{eq:=B_p} and \eqref{eq:gould} is that, in view of the definition \eqref{eq:ga_j} of $\ga_{m,j}$, the binomial coefficients involved in \eqref{eq:=B_p} are of the form $\binom{2m}\cdot$, with the constant choose-from index $2m[\ge p+1]$, whereas \eqref{eq:gould} involves all the first $p+1$ rows of the Pascal binomial triangle.

Recall the notation introduced in \eqref{eq:int}. 
The first 
three approximations $A_m$ of the sum $\sum_{k=0}^{n-1}f(k)$ are as follows: 
\begin{align}
A_1=& \int_{-1/2}^{n-1/2}, \notag \\
A_2=&  \frac{4}{3} \int_{-1/2}^{n-1/2}
-\frac{1}{6} \Big(\int_{-1}^n+\int_0^{n-1}\Big)\quad \text{(cf.\ \eqref{eq:A_2-dots})}, 
\notag \\
A_3=&  \frac{3}{2} \int_{-1/2}^{n-1/2}-\frac{3}{10} \Big(\int_{-1}^n+\int_0^{n-1}\Big)+\frac{1}{30} \Big(\int_{-3/2}^{n+1/2}+\int_{-1/2}^{n-1/2} +\int_{1/2}^{n-3/2}\Big) \label{eq:A_3}\\
=&  \frac{23}{15} \int_{-1/2}^{n-1/2}-\frac{3}{10} \Big(\int_{-1}^n+\int_0^{n-1}\Big)+\frac{1}{30} \Big(\int_{-3/2}^{n+1/2}+
\int_{1/2}^{n-3/2}\Big),   \label{eq:A_3-alt} 
\end{align}
using the first 
expression for $A_m$ in \eqref{eq:A_m}; cf.\ Remark~\ref{rem:intrs}. 

If (as usually will be the case) $n\ge m-1$, then the integral approximation $A_m$ can be written as just one integral, as follows:
\begin{equation}\label{eq:one int}
	A_m=\int_{-m/2}^{n-1+m/2}\dd x\,f(x)h_m(x), 
\end{equation}
where 
\begin{equation}\label{eq:h_m}
	h_m:=\sum_{j=1}^m\ga_{m,j}\sum_{i=0}^{j-1}\ii_{(i-j/2,n-1+j/2-i]}
	=\sum_{\al=1-m}^{m-1}\tau_{m,1+|\al|}\,\ii_{(\al/2-1/2,n-1/2-\al/2]} 
\end{equation}
and $\ii_A$ denotes the indicator function of a set $A$. 

The integral approximation of the sum $\sum_{k=0}^{n-1}f(k)$ is illustrated in Figure~\ref{fig:TeX}, for $n=10$ and $m=3$. In the left panel of the figure, each of the six integrals $\int_a^b$ in the expression \eqref{eq:A_3} for $A_3$ is represented by a rectangle whose projection onto the horizontal axis is the interval $(a,b]$ and whose height equals the absolute value of the coefficient of the integral in that expression for $A_3$. The rectangle is placed above or below the horizontal axis depending on whether the respective coefficient is positive or negative. 
Thus, each such rectangle also represents a summand of the form $\ga_{m,j}\ii_{(i-j/2,n-1+j/2-i]}$ in the expression \eqref{eq:h_m} of $h_m$. 
The rectangles of the same height are shown in the same color.  E.g., the two green rectangles represent the integrals $\int_{-1}^n=\int_{-1}^{10}$ and $\int_0^{n-1}=\int_0^9$; the height of each of these green rectangles is $\frac3{10}$, the absolute value of the coefficient $-\frac3{10}$ of these integrals, and these rectangles are ``negative'' (that is, below the horizontal axis), since the coefficient $-\frac3{10}$ is negative.   

The 
resulting function $h_3$, which is a sort of sum of all the ``positive'' and  ``negative'' rectangles or, more precisely, the sum of the corresponding functions 
$\ga_{m,j}\ii_{(i-j/2,n-1+j/2-i]}$ (for $n=10$), is shown in the right panel of Figure~\ref{fig:TeX}. 
In accordance with \eqref{eq:A_3}--\eqref{eq:A_3-alt}, the middle blue rectangle has the same base as, and hence can be ``absorbed into'', the red rectangle. 

\begin{figure}[htbp]
	\centering
		\includegraphics[width=1.00\textwidth]{
		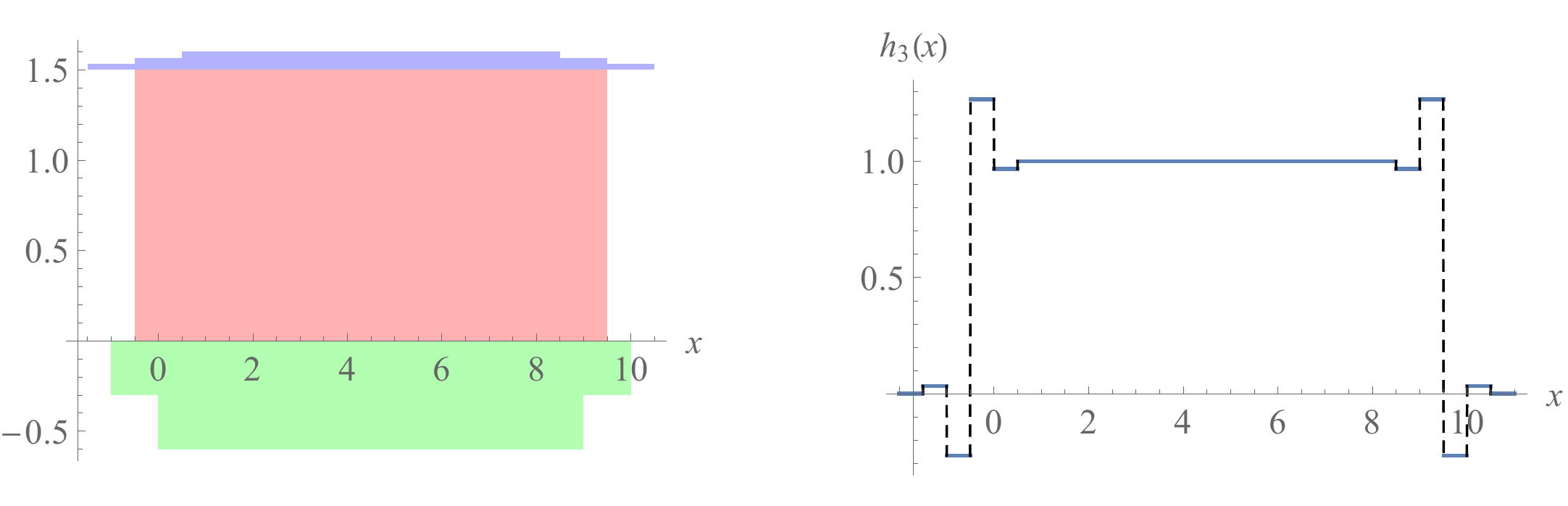}
	\caption{Left panel: Graphical representation of the integral approximation $A_3$ for $n=10$. Right panel: Graph of the function $h_3$ for $n=10$. }
	\label{fig:TeX}
\end{figure}

One can see that the proposed integral approximation of the sum $\sum_{k=0}^{n-1}f(k)$ works by (i)~``borrowing'' information about how the function $f$ integrates in left and right neighborhoods of, respectively, the left and right endpoints of the interval $[0,n-1]$ and (ii) taking into account boundary effects near the endpoints both inside and outside the interval $[0,n-1]$. 

\begin{remark}\label{rem:infty}
For real $a$, let $\int_a^{\infty-}\dd x\,f(x):=\lim\big(\int_a^{r/2}\dd x\,f(x)\colon r\in\N,\,r\to\infty\big)$, if this limit exists and is finite. 
If such an ``improper'' integral $\int_{-m/2}^{\infty-}\dd x\,f(x)$ exists and is finite and if  
the series \break 
$\sum_{k=0}^\infty f^{(2m)}(k+v)$ converges uniformly in $v\in[-m/2,m/2]$, then \eqref{eq:} will hold if the instances of $\sum_{k=0}^{n-1}$, $\int_{i-j/2}^{n-1+j/2-i}$, 
$\int_{-1+j/2-i}^{n-1+j/2-i}$, $\int_{\al/2-1/2}^{n-1/2-\al/2}$, $\int_{-1/2-\al/2}^{n-1/2-\al/2}$, $\int_{-\al/2-1/2}^{n-1/2+\al/2}$, and $\int_{-1/2+\al/2}^{n-1/2+\al/2}$ in \eqref{eq:}, \eqref{eq:A_m}, \eqref{eq:A_m,alt1}, \eqref{eq:A_m,alt2}, and \eqref{eq:R_m} are replaced respectively by $\sum_{k=0}^\infty$, $\int_{i-j/2}^{\infty-}$, $\int_{-1+j/2-i}^{\infty-}$, $\int_{\al/2-1/2}^{\infty-}$, $\int_{-1/2-\al/2}^{\infty-}$, $\int_{-\al/2-1/2}^{\infty-}$, and $\int_{-1/2+\al/2}^{\infty-}$. 
\hfill\qed
\end{remark}

\begin{remark}\label{rem:ep g(ep)}
Suppose that the function $f$ in Theorem~\ref{th:} is given by the formula $f(x)=g(\vp x)\vp$ for some function $g$, some real $\vp>0$, and all real $x$. So, if $\vp$ is a small number, the sum $\sum_{k=0}^{n-1}f(k)=\sum_{k=0}^{n-1}g(\vp k)\vp$ may be thought of as an integral sum for the function $g$ over a fine partition of an interval. Suppose now that, for instance, the function $|g^{(2m)}|$ is nondecreasing on the interval $[-\vp-\vp m/2,\infty)$ and let $\tilde M_{2m}:=\int_{-\vp-\vp m/2}^\infty\dd y\,|g^{(2m)}(y)|$. Then for all $v\in[-m/2,m/2]$
\begin{equation*}
	\sum_{k=0}^{n-1}|f^{(2m)}(k+v)|=\vp^{2m}\sum_{k=0}^{n-1}|g^{(2m)}\big((k+v)\vp)|\,\vp
	\le\vp^{2m}\tilde M_{2m}, 
\end{equation*}
so that, by \eqref{eq:|R|<}, 
\begin{equation*}
		|R_m|\le\vp^{2m} \frac{\tilde M_{2m}}{(2m+1)!\,2^{2m}}\,\sum_{j=1}^m|\ga_{m,j}|j^{2m+1}, 
\end{equation*}
which provides a justification for referring to $R_m$ as the remainder. 
Another, less trivial but hopefully more convincing justification will be provided in Sections~\ref{R bounds}, \ref{series}, and \ref{c,m}. 

Also, it is clear that $R_m=0$ if the function $f$ is any polynomial of degree at most $2m-1$. 
\hfill\qed
\end{remark}

\begin{remark}
The alternative summation formula given in Theorem~\ref{th:} can be generalized to multi-index sums. Indeed, one can similarly approximate multi-index sums $$\sum_{k_1=0}^{n_1-1}\cdots\sum_{k_r=0}^{n_r-1}f(k_1,\dots,k_r)$$ 
of values of functions $f$ of several variables by linear combinations of corresponding integrals of $f$ over rectangles in $\R^r$, using the multivariable Taylor expansions 
$f(\kk+\uu)=f(\kk)+
f'(\kk)\cdot\uu+\cdots$, where $\kk:=(k_1,\dots,k_r)$ and $\uu:=(u_1,\dots,u_r)$. 
See \cite{sums-via-ints-multivar}, where an extension to sums over lattice polytopes is also given. 
\hfill\qed
\end{remark}

\section{Bounds on the remainders}\label{R bounds}

\begin{remark}\label{rem:R}
If on the interval $(-m/2-1/2,n-1/2+m/2)$ one has $|f^{(2m)}|\le g_{2m}$ for some nonnegative convex function $g_{2m}\colon\R\to\R$, then \eqref{eq:<M} will hold with 
\begin{equation}\label{eq:M,eu-const}
	M_{2m}=\int_{-m/2-1/2}^{n-1/2+m/2}\dd x\, g_{2m}(x)\le\int_{-m/2-1/2}^\infty\dd x\, g_{2m}(x);
\end{equation}
here we used the simple observation that $g(a)\le\int_{a-1/2}^{a+1/2}\dd x\,g(x)$ for any real $a$ and any function $g$ that is convex on the interval $(a-1/2,a+1/2)$.  
\hfill\qed
\end{remark}

In typical applications, the function $f$ will admit an 
analytic extension of at most a polynomial growth into a large enough region of the complex plane $\CC$ containing $[0,\infty)$. Then the higher-order derivatives of $f$ can be nicely bounded without an explicit evaluation of them, and then one can use Remark~\ref{rem:R} and 
\eqref{eq:|R|<} to easily bound the remainder $R_m$. 
Such a bounding tool is provided by 

\begin{proposition}\label{prop:cauchy}  
For real $a\ge(m+3)/2$ and $\th_0\in(0,\pi/2]$, consider the 
sector 
\begin{equation*}
	S:=\{-a+re^{i\th}\colon r\ge0,\,|\th|\le\th_0\}    
\end{equation*} 
of the complex plane $\CC$ with vertex at the point $-a$, so that $S\supset[0,\infty)$; 
of course, 
here $i$ denotes the imaginary unit.  
Suppose that the function $f\colon\R\to\R$ can be extended to a function (which we shall still denote by $f$) mapping $\R\cup S$ into $\CC$ such that $f$ is continuous on the set $S$, 
holomorphic in the interior of $S$, and 
\begin{equation}\label{eq:|f|<}
	|f(z)|\le\mu\,
	|z+a+1|^\la
\end{equation}
for some real $\mu\in[0,\infty)$ and $\la\in[0,2m-1)$ and for all $z\in S$. Then 
\begin{equation}\label{eq:R_m<}
	|R_m|\le\frac{C(\mu,\la,\th_0,m)}{(a-m/2-1/2)^{2m-1-\la}}, 
\end{equation}
where 
\begin{equation}\label{eq:C:=}
	C(\mu,\la,\th_0,m):=	\frac{\mu\,(
	2+\sin\th_0)^\la}{(2m+1)(2m-1-\la)(2\sin\th_0)^{2m}}\,\sum_{j=1}^m|\ga_{m,j}|j^{2m+1}. 
\end{equation}
Similarly, for $m\ge4$, $a>0$, and $\la\in[0,2m-2)$,  
\begin{equation}\label{eq:R_m^EM<}
	|R^\EM_m|\le\frac{C^\EM(\mu,\la,\th_0,m)}{a^{2m-2-\la}}, 
\end{equation}
where 
\begin{equation}\label{eq:C^EM:=}
	C^\EM(\mu,\la,\th_0,m):=	\frac{2.02\mu\,(
	2+\sin\th_0)^\la}{(2m-2-\la)(\sin\th_0)^{2m-1}}\,
	\frac{(2m-1)!}{(2\pi)^{2m-1}}.  
\end{equation}
\end{proposition}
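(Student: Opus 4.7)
The plan is to apply Cauchy's integral formula for derivatives on disks inscribed in the sector $S$ and then feed the resulting pointwise upper bounds into \eqref{eq:|R|<} (via Remark~\ref{rem:R}) for $|R_m|$ and into \eqref{eq:R^EM<} for $|R^\EM_m|$. Fix a real $x$ with $x+a>0$. Elementary geometry shows that the largest closed disk centered at $x$ and contained in $S$ has radius $r:=(x+a)\sin\th_0$, since the two rays bounding $S$ make angle $\th_0$ with the real axis at the vertex $-a$. Combining
\[
|f^{(k)}(x)|\le\frac{k!}{r^k}\,\sup_{|z-x|=r}|f(z)|
\]
with the growth hypothesis \eqref{eq:|f|<} and the elementary bound $|z+a+1|\le (x+a+1)+r\le (x+a)(2+\sin\th_0)$, valid whenever $x+a\ge 1$, yields
\[
|f^{(k)}(x)|\le\frac{k!\,\mu\,(2+\sin\th_0)^\la}{(\sin\th_0)^{k}\,(x+a)^{k-\la}}.
\]

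For the Alt bound \eqref{eq:R_m<} I would take $k=2m$. Because $a\ge(m+3)/2$, every $x\ge -m/2-1/2$ satisfies $x+a\ge 1$, so the displayed estimate is valid on the whole range appearing in Remark~\ref{rem:R}. The right-hand side is a decreasing convex function of $x$ on $(-a,\infty)$, so I would use it as $g_{2m}$ and integrate from $-m/2-1/2$ to $\infty$ as in \eqref{eq:M,eu-const}, getting
\[
M_{2m}\le\frac{(2m)!\,\mu\,(2+\sin\th_0)^\la}{(\sin\th_0)^{2m}(2m-1-\la)(a-m/2-1/2)^{2m-1-\la}}.
\]
Substituting into \eqref{eq:|R|<} and simplifying $(2m)!/(2m+1)!=1/(2m+1)$ and $2^{2m}(\sin\th_0)^{2m}=(2\sin\th_0)^{2m}$ produces exactly the constant \eqref{eq:C:=}.

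The EM bound runs in parallel. I would take $k=2m-1$ in the Cauchy estimate and integrate over $[0,\infty)\supseteq[0,n-1]$ to obtain
\[
\int_0^{n-1}|f^{(2m-1)}(x)|\,\dd x\le\frac{(2m-1)!\,\mu\,(2+\sin\th_0)^\la}{(\sin\th_0)^{2m-1}(2m-2-\la)\,a^{2m-2-\la}},
\]
and then combine this with $\zeta(2m-1)\le\zeta(7)<1.01$ for $m\ge 4$ and with \eqref{eq:R^EM<} to get \eqref{eq:R_m^EM<}--\eqref{eq:C^EM:=}.

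The main friction point will be the bookkeeping of the absolute constant on the Cauchy circle: one must select the version of the elementary inequality $|z+a+1|\le\cdots$ that yields the clean factor $(2+\sin\th_0)^\la$ throughout the relevant range of $x$, since a naive estimate instead produces $(1+\sin\th_0)^\la$ together with an unwanted $a$-dependent correction. The convexity check needed to invoke Remark~\ref{rem:R}, the choice of $r$ as the maximal inscribed radius, and the tail-extension of the EM integral are then all routine.
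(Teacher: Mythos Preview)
Your proposal is correct and follows essentially the same route as the paper's proof: the same Cauchy circle of radius $(x+a)\sin\th_0$, the same pointwise bound $|f^{(k)}(x)|\le k!\,\mu(2+\sin\th_0)^\la(\sin\th_0)^{-k}(x+a)^{\la-k}$, and the same appeal to Remark~\ref{rem:R} with \eqref{eq:|R|<} for the Alt bound and to \eqref{eq:R^EM<} with $\zeta(2m-1)<1.01$ for the EM bound. The ``friction point'' you anticipate is exactly the one the paper handles, in the same way.
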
 

In applications, $m$ will be rather large, and $a$ will be substantially greater than $m$, to make the upper bounds on $|R_m|$ and $|R^\EM_m|$ in \eqref{eq:R_m<}  and \eqref{eq:R_m^EM<} very small. 

\begin{remark}\label{rem:mu} 
As in Proposition~\ref{prop:cauchy}, suppose that $f$ is continuous on the set $S$ and 
holomorphic in the interior of $S$. 
Consider the function $h$ defined by the formula $h(z):=f(z)/(z+a+1)^\la$ for $z\in S$. 
In view of the Phragm\'en--Lindel\"of extension of the maximum modulus principle 
(applied to $h$), for the condition \eqref{eq:|f|<} to hold for all $z\in S$, it is enough that \eqref{eq:|f|<} hold for all $z$ on the boundary of $S$ -- provided that $f$ does not grow too fast on $S$, that is, provided that $|f(z)|\le Ce^{|z+a|^\al}$ for some $\al\in\big[0,\frac\pi{2\th_0}\big)$,  some real $C$, and all $z\in S$; see e.g.\ \cite[\S 5.6.1]{titchmarsh}. 
\end{remark}

Bounds \eqref{eq:|R|<} and \eqref{eq:R_m<}--\eqref{eq:C:=} are complemented by 

\begin{proposition}\label{prop:sum gaj}
If $m\ge2$ then 
\begin{equation}\label{eq:sum gaj}
	\sum_{j=1}^m|\ga_{m,j}|j^{2m+1}\le1.001\pi\La_*^m m^{2m+1}, 
\end{equation}
where 
\begin{equation}\label{eq:La}
	\La_*:=\max_{0<t<1}\La(t)=0.3081\dots,\quad 
	\La(t):=(1-t)^{t-1} (1+t)^{-1-t} t^2. 
\end{equation}
For $m=1$, \eqref{eq:sum gaj} holds with $1.0331$ in place of $1.001$. 
\end{proposition}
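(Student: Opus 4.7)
The plan is to reduce the bound to a binomial-weighted sum, apply Stirling's formula, and then control the result by a Laplace/Poisson argument. By \eqref{eq:ga_j}, $|\ga_{m,j}|j^{2m+1}=2j^{2m}\binom{2m}{m+j}/\binom{2m}{m}$, so \eqref{eq:sum gaj} reduces to bounding $\bigl(2/\binom{2m}{m}\bigr)\sum_{j=1}^m j^{2m}\binom{2m}{m+j}$. Using Robbins's two-sided Stirling bounds on $n!$, I would show that with $t_j:=j/m$, for every $1\le j\le m-1$,
\[
\frac{j^{2m}\binom{2m}{m+j}}{\binom{2m}{m}} \;\le\; (1+\delta_m)\,\frac{m^{2m}\La(t_j)^m}{\sqrt{1-t_j^2}},
\]
with an explicit $\delta_m\to 0$; at leading Stirling order this is an equality. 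The single boundary term $j=m$ contributes $m^{2m}/\binom{2m}{m}\le(4\La_*)^{-m}\,\La_*^m\,\sqrt{\pi m}\,m^{2m}\,(1+o(1))$, which is exponentially negligible relative to the target since $4\La_*\approx 1.232>1$.

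Next, I would produce a Gaussian majorant for $\La^m$. From \eqref{eq:La}, $(\log\La)'(t)=\log\tfrac{1-t}{1+t}+\tfrac{2}{t}$ and $(\log\La)''(t)=-\tfrac{2}{1-t^2}-\tfrac{2}{t^2}$; the latter attains its interior maximum $-8$ at $t=1/\sqrt2$ (easy one-variable optimization in $u=t^2$). Since $(\log\La)'(t_*)=0$ by definition of the maximizer $t_*$, Taylor's theorem with integral remainder gives
\[
\La(t)^m \;\le\; \La_*^m\,e^{-4m(t-t_*)^2}\qquad (t\in(0,1)).
\]
Combining this with $1/\sqrt{1-t_j^2}\le(1+o(1))/\sqrt{1-t_*^2}$ inside the relevant window $|j-mt_*|=O(\sqrt m)$ (outside it the Gaussian factor already overwhelms the $1/\sqrt{1-t_j^2}\le\sqrt m$ bound), the sum reduces to $\sum_{k\in\Z}e^{-4(k-\theta)^2/m}$ for some $\theta\in[0,1)$; by Poisson summation this equals $\tfrac{\sqrt{\pi m}}{2}\bigl(1+O(e^{-\pi^2 m/4})\bigr)$.

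Assembling,
\[
\sum_{j=1}^m |\ga_{m,j}|\,j^{2m+1} \;\le\; \frac{m^{2m+1/2}\La_*^m\sqrt{\pi}}{\sqrt{1-t_*^2}}\,(1+o(1))\;\approx\; 3.2\,m^{2m+1/2}\La_*^m,
\]
which sits below the target $1.001\pi\,\La_*^m m^{2m+1}\approx 3.145\,m^{2m+1}\La_*^m$ once the extra factor of $m^{1/2}$ kicks in (the asymptotic ratio is $\approx 1.020/\sqrt m$). The case $m=1$ is the direct calculation $|\ga_{1,1}|=1$ together with $1.0331\pi\La_*\approx 1.00003$.

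The main obstacle is uniformity in $m$. Because the asymptotic prefactor $3.2$ only just beats $3.145$, the Stirling factor $(1+\delta_m)$ and the Poisson tail $e^{-\pi^2 m/4}$ are not entirely negligible at $m=2,3$, so a clean proof would either push the explicit constants in Robbins's bound hard enough to close the gap uniformly, or establish the asymptotic inequality for $m\ge m_0$ with a small explicit $m_0$ and verify $m=2,\dots,m_0-1$ by direct numerical evaluation of the (rational) sum $\sum_{j=1}^m|\ga_{m,j}|j^{2m+1}$.
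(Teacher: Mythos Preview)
Your outline is sound and would work, but it is more elaborate than necessary, and the paper's proof is considerably shorter. The paper obtains the same Stirling-type pointwise bound you state, namely $|\ga_{m,j}|j^{2m+1}\le 2m^{2m}\La(t_j)^m(1-t_j^2)^{-1/2}$ for $1\le j\le m-1$, but proves it as an \emph{exact} inequality (no $(1+\delta_m)$) by a direct ratio/monotonicity argument on $r_{m,j}:=\trho_{m,j}/(|\ga_{m,j}|j)$ rather than by invoking Robbins. The main simplification comes next: instead of your Gaussian majorant plus Poisson summation, the paper simply discards the Laplace localization altogether, using the crude bound $\La(t_j)^m\le\La_*^m$ and the elementary convexity estimate
\[
\sum_{j=1}^{m-1}\frac{1}{\sqrt{1-j^2/m^2}}\;<\;\int_0^m\frac{\dd x}{\sqrt{1-x^2/m^2}}\;=\;\frac{\pi}{2}\,m.
\]
This produces $\pi\,m^{2m+1}\La_*^m$ on the nose, with the boundary term $j=m$ contributing a relative correction $\vp_m=2(4\La_*)^{-m}e^{1/(8m)}/\sqrt{\pi m}<0.001$ for $m\ge 26$; the range $2\le m\le 25$ is checked directly. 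Your route gains an extra factor $\sqrt m$ in the asymptotic, but that gain is not needed for the stated bound, and the price is having to control the $(1-t_j^2)^{-1/2}$ weight across the Laplace window/tail split and to track the Poisson and Robbins constants carefully---which, as you note yourself, still forces a small-$m$ numerical check. The paper's argument avoids all of that.
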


It appears that in most practical situations it will be possible to use Proposition~\ref{prop:cauchy} with $\th_0=\pi/2$. In such a case, the expression for the constant $ $ in \eqref{eq:C:=} can be simplified, and we immediately obtain the following corollary of Propositions~\ref{prop:cauchy} and \ref{prop:sum gaj}. 

\begin{corollary}\label{cor:R bound}
Suppose that $m\ge2$ and the conditions in Proposition~\ref{prop:cauchy} hold with $\th_0=\pi/2$. Then 
\begin{equation}\label{eq:R bound}
	|R_m|\le \frac{1.001\pi\mu 3^\la}{(2m+1)(2m-1-\la)}\,\Big(\frac{\La_*}4\Big)^m 
	\frac{m^{2m+1}}{(a-m/2-1/2)^{2m-1-\la}}. 
\end{equation}
For $m=1$, \eqref{eq:R bound} holds with $1.0331$ in place of $1.001$. 
\end{corollary}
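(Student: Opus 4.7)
The plan is to obtain the corollary by a direct substitution: specialize Proposition~\ref{prop:cauchy} to $\th_0=\pi/2$, then apply the upper bound on $\sum_{j=1}^m|\ga_{m,j}|j^{2m+1}$ furnished by Proposition~\ref{prop:sum gaj}. There is essentially no new analytic work to do; everything follows by arithmetic simplification of the constants.

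First I would set $\th_0=\pi/2$ in the bound \eqref{eq:R_m<}--\eqref{eq:C:=}. Since $\sin(\pi/2)=1$, the factor $(2+\sin\th_0)^\la$ reduces to $3^\la$, while $(2\sin\th_0)^{2m}$ reduces to $2^{2m}=4^m$. Thus
\begin{equation*}
C(\mu,\la,\pi/2,m)=\frac{\mu\,3^\la}{(2m+1)(2m-1-\la)\,4^m}\,\sum_{j=1}^m|\ga_{m,j}|j^{2m+1},
\end{equation*}
and \eqref{eq:R_m<} becomes
\begin{equation*}
|R_m|\le\frac{\mu\,3^\la}{(2m+1)(2m-1-\la)\,4^m}\,\sum_{j=1}^m|\ga_{m,j}|j^{2m+1}\cdot\frac1{(a-m/2-1/2)^{2m-1-\la}}.
\end{equation*}

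Next I would insert the inequality $\sum_{j=1}^m|\ga_{m,j}|j^{2m+1}\le 1.001\pi\La_*^m m^{2m+1}$ from Proposition~\ref{prop:sum gaj} (valid for $m\ge2$), and group the factor $\La_*^m/4^m$ into $(\La_*/4)^m$. This produces exactly the bound \eqref{eq:R bound}. For the case $m=1$, the same substitution carries through verbatim with the constant $1.001$ replaced by $1.0331$, as noted in Proposition~\ref{prop:sum gaj}, yielding the final clause of the corollary.

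The hard part, if any, is purely bookkeeping: making sure the exponents of $3$, the factors of $4^m$ and $m^{2m+1}$, and the polynomial denominator $(2m+1)(2m-1-\la)$ are distributed correctly between the ``constant'' and the ``$a$-dependent'' part. Since Propositions~\ref{prop:cauchy} and \ref{prop:sum gaj} have already been established and the hypothesis $\th_0=\pi/2$ simply trivializes the trigonometric constants, the corollary follows immediately.
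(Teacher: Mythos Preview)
Your proposal is correct and matches the paper's own treatment exactly: the paper introduces this as an immediate corollary of Propositions~\ref{prop:cauchy} and \ref{prop:sum gaj}, obtained by setting $\th_0=\pi/2$ in \eqref{eq:C:=} and substituting the bound \eqref{eq:sum gaj}. Your arithmetic check of the constants (the $3^\la$, the $4^m$, and the grouping into $(\La_*/4)^m$) is accurate, and there is nothing more to add.
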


\section{Application to summing (possibly divergent) series}\label{series}

The alternative summation formula presented in Theorem~\ref{th:} can be used for summing (possibly divergent) series, as follows.  

\begin{proposition}\label{prop:series}
Let $m_0$ be a natural number, and suppose that $m\ge m_0$. 
Suppose that 
\begin{equation}\label{eq:f^ to0}
	f^{(2m_0-1)}(x)\underset{x\to\infty}\longrightarrow0 
\end{equation}
and the series 
\begin{equation}\label{eq:R unif}
	\sum_{k=0}^\infty f^{(2m)}(k+w) \ \text{converges uniformly in $w\in[-m/2,m/2]$. }
\end{equation}
Let $F$ be any antiderivative of $f$, so that $F'=f$. 
Then 
\begin{equation}\label{eq:series}
\sum_{k\ge0}^\Alt f(k):=\lim_{n\to\infty}	\Big(\sum_{k=0}^{n-1} f(k)-G_{m_0,F}(n)\Big)=
	-G_{m,F}(0)-R_{m,f}(\infty), 
\end{equation}
where (cf.\ \eqref{eq:A_m}, \eqref{eq:A_m,alt1},  and \eqref{eq:A_m,alt2})   
\begin{align}
G_{m,F}(n):=& \sum_{j=1}^m\ga_{m,j}\sum_{i=0}^{j-1}F(n-1+j/2-i) \label{eq:G}\\ 
	=&\sum_{\al=1-m}^{m-1}\tau_{m,1+|\al|}\,F(n-1/2-\al/2)
	\label{eq:G_m,alt1} 
	\\ 
	=&\tau_{m,1}\,F(n-1/2)
	+\sum_{\al=1}^{m-1}\tau_{m,1+\al}\,
	\big[F(n-1/2-\al/2)
	+F(n-1/2+\al/2)\big]
	\label{eq:G_m,alt2}
\end{align}
and (cf.\ \eqref{eq:R_m}) 
\begin{equation}\label{eq:R_m,f}
	R_{m,f}(\infty):=
	\frac1{(2m-1)!\,2^{2m+1}}\,\int_0^1\dd s\,(1-s)^{2m-1}\int_{-1}^{1}\dd v\,v^{2m}
	\sum_{j=1}^m\ga_{m,j}j^{2m+1} \sum_{k=0}^\infty f^{(2m)}(k+jsv/2). 
\end{equation} 
\end{proposition}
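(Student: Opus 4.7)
The plan is to read off the identity of Theorem~\ref{th:} as a recipe for $\sum_{k=0}^{n-1}f(k)-G_{m,F}(n)$, pass to the limit on the right-hand side, and then match that limit to the one involving $G_{m_0,F}$ in place of $G_{m,F}$. First, rewriting each integral in \eqref{eq:A_m} as $F(n-1+j/2-i)-F(i-j/2)$ and using the substitution $i\mapsto j-1-i$ to identify $\sum_{i=0}^{j-1}F(i-j/2)$ with $\sum_{i=0}^{j-1}F(-1+j/2-i)$, one sees that $A_m=G_{m,F}(n)-G_{m,F}(0)$. Inserting this into \eqref{eq:} gives
\begin{equation*}
\sum_{k=0}^{n-1} f(k)-G_{m,F}(n)=-G_{m,F}(0)-R_m.
\end{equation*}
The uniform convergence hypothesis \eqref{eq:R unif} lets one pass the limit $n\to\infty$ through the finite outer sum over $j$ and, by dominated convergence, through the $s$- and $v$-integrals in \eqref{eq:R_m}, yielding $R_m\to R_{m,f}(\infty)$. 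Hence the right-hand side converges to $-G_{m,F}(0)-R_{m,f}(\infty)$.

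The crucial remaining step is to show $\lim_{n\to\infty}\bigl[G_{m,F}(n)-G_{m_0,F}(n)\bigr]=0$, so that the limit on the left of \eqref{eq:series} (defined using $m_0$) coincides with the one just computed (defined using $m$). For this I would apply Taylor's theorem with integral remainder to each term $F(n-1+j/2-i)=F(n+d)$, with $d:=j/2-i-1\in[-m/2,m/2-1]$, expanding to order $2m_0-1$:
\begin{equation*}
F(n+d)=\sum_{p=0}^{2m_0-1}\frac{F^{(p)}(n)}{p!}\,d^p+\frac{1}{(2m_0-1)!}\int_0^d (d-t)^{2m_0-1}f^{(2m_0-1)}(n+t)\,\dd t.
\end{equation*}
When the polynomial parts are assembled with coefficients $\ga_{m,j}$ (resp.\ $\ga_{m_0,j}$), Proposition~\ref{lem:=B_p}, applied at each order $p\le 2m_0-1\le 2m-1$, shows that both assemblies produce exactly $\sum_{p=0}^{2m_0-1}(B_p/p!)F^{(p)}(n)$ and hence cancel in $G_{m,F}(n)-G_{m_0,F}(n)$. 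What survives is a finite linear combination of Taylor remainders, each an integral of $f^{(2m_0-1)}(n+t)$ over a bounded interval of $t$. By \eqref{eq:f^ to0} the convergence $f^{(2m_0-1)}(x)\to 0$ at infinity is automatically uniform on bounded translates, so this combination tends to $0$.

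Combining the two steps via $[\sum_{k=0}^{n-1}f(k)-G_{m_0,F}(n)]=[\sum_{k=0}^{n-1}f(k)-G_{m,F}(n)]+[G_{m,F}(n)-G_{m_0,F}(n)]$ yields \eqref{eq:series}; the alternative forms \eqref{eq:G_m,alt1} and \eqref{eq:G_m,alt2} follow from \eqref{eq:A_m,alt1} and \eqref{eq:A_m,alt2} by exactly the same evaluate-and-regroup observation used to express $A_m$ as $G_{m,F}(n)-G_{m,F}(0)$. I expect the main obstacle to be the $m$-independence step: without the identity \eqref{eq:=B_p}, there would be no reason for the low-order Taylor coefficients of $G_{m,F}$ and $G_{m_0,F}$ around $n$ to agree, and hence no way to reduce the difference to a remainder controlled by the single decaying derivative $f^{(2m_0-1)}$. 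Granted that identity, the argument boils down to Taylor's theorem plus the elementary fact that a pointwise limit at infinity is uniform on bounded translates.
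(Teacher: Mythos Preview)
Your proof is correct and follows essentially the same route as the paper's: both reduce to showing $G_{m,F}(n)-G_{m_0,F}(n)\to0$ via a Taylor expansion of $F$ near $n$, with Proposition~\ref{lem:=B_p} forcing the polynomial parts to agree and hypothesis~\eqref{eq:f^ to0} killing the remainder. Your handling of the polynomial step is in fact slightly more direct than the paper's (which first shows $G_{m,T}(0)=G_{m_0,T}(0)$ and then reapplies the main identity to deduce $G_{m,T}(n)=G_{m_0,T}(n)$), thanks to your choice of expansion point $n$ rather than $n-1$, which makes the shifts $d=j/2-i-1$ match~\eqref{eq:=B_p} immediately.
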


The limit $\sum_{k\ge0}^\Alt f(k)$ in \eqref{eq:series} may be referred to as the (generalized) sum of the possibly divergent series $\sum_{k=0}^\infty f(k)$ by means of the Alt formula \eqref{eq:}. 

\begin{remark}\label{rem:m_0=1}
The centering/stabilizing term $G_{m_0,F}(n)$ in \eqref{eq:series} equals $F(n-1/2)$ for all natural $n$ if one can take $m_0=1$. 
Thus, if $F(\infty):=\lim_{x\to\infty}F(x)$ exists and is finite, and if $F$ is chosen so that $F(\infty)=0$, then we will have $\sum_{k\ge0}^\Alt f(k)=\sum_{k=0}^\infty f(k)$. 
\end{remark}

The key point in the proof of 
Proposition~\ref{prop:series} is that, under the conditions of Proposition~\ref{prop:series},  
$G_{m,F}(n)-G_{m_0,F}(n)\underset{n\to\infty}\longrightarrow0$. 

The EM summation formula, too, can be used for summing possibly divergent series. The following proposition is rather similar to 
Proposition~\ref{prop:series}. 

\begin{proposition}\label{prop:seriesEM}
Let $m_0$ be a 
natural number, and suppose that $m\ge m_0$. 
Suppose that 
\begin{equation}\label{eq:f^^ to0}
	f^{(2j-1)}(x)\underset{x\to\infty}\longrightarrow0 \text{\quad for $j=m_0,\dots,m-1$ }
\end{equation}
and 
\begin{equation}\label{eq:REM unif}
	\int_0^{\infty} \dd x\,|f^{(2m-1)}(x)|<\infty. 
\end{equation}
Let $F$ be any antiderivative of $f$, so that $F'=f$. 
Then 
\begin{equation}\label{eq:seriesEM}
\sum_{k\ge0}^\EM f(k):=	\lim_{n\to\infty}\Big(\sum_{k=0}^{n-1} f(k)-G^\EM_{m_0,F}(n)\Big)=
	f(0)-G^\EM_{m,F}(1)+R^\EM_{m,f}(\infty), 
\end{equation}
where (cf.\ \eqref{eq:A^EM})  
\begin{align}
G^\EM_{m,F}(n):=& F(n-1) 
    + \frac{F'(n-1)}2  
    +
    \sum_{j=1}^{m-1}\frac{B_{2j}}{(2j)!}F^{(2j)}(n-1)
	\label{eq:G^EM_m} 
\end{align}
and (cf.\ \eqref{eq:R^EM}) 
\begin{equation*}
	R^\EM_{m,f}(\infty):=
	\frac1{(2m-1)!}\,\int_0^{\infty} \dd x\,f^{(2m-1)}(x)\,
	B_{2m-1}(x-\lfloor x\rfloor). 
\end{equation*} 
\end{proposition}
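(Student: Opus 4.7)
The plan is to derive this as a straightforward corollary of the EM identity \eqref{eq:EM}, by rewriting the right-hand side in terms of the function $F$ and isolating contributions at the left endpoint $0$ from those at the moving right endpoint $n$. Since the hypotheses are already chosen to kill the $n$-dependent pieces in the limit, the main work is careful bookkeeping; the only analytic input is a dominated-convergence argument for the remainder integral.

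First, I would rewrite $A^{\EM}_m$ from \eqref{eq:A^EM} using $\int_0^{n-1}\dd x\,f(x)=F(n-1)-F(0)$ and $f^{(2j-1)}=F^{(2j)}$. Grouping the terms evaluated at $n-1$ and those evaluated at $0$ separately, and comparing with the definition \eqref{eq:G^EM_m} of $G^{\EM}_{m,F}$, a direct computation gives the key identity
\begin{equation*}
A^{\EM}_m=G^{\EM}_{m,F}(n)-\bigl[F(0)-f(0)/2+\textstyle\sum_{j=1}^{m-1}\frac{B_{2j}}{(2j)!}f^{(2j-1)}(0)\bigr]=G^{\EM}_{m,F}(n)-G^{\EM}_{m,F}(1)+f(0),
\end{equation*}
where the only subtlety is the sign flip on the $f(0)/2$ coming from $+\frac{f(n-1)+f(0)}{2}$ versus $G^{\EM}_{m,F}(1)=F(0)+f(0)/2+\cdots$. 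Substituting this into \eqref{eq:EM} yields
\begin{equation*}
\sum_{k=0}^{n-1}f(k)-G^{\EM}_{m_0,F}(n)=\bigl[G^{\EM}_{m,F}(n)-G^{\EM}_{m_0,F}(n)\bigr]-G^{\EM}_{m,F}(1)+f(0)+R^{\EM}_m.
\end{equation*}

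Next, I would take $n\to\infty$ term by term. The bracketed difference telescopes to the tail
$\sum_{j=m_0}^{m-1}\frac{B_{2j}}{(2j)!}f^{(2j-1)}(n-1)$, each of whose summands vanishes in the limit by hypothesis \eqref{eq:f^^ to0}; if $m=m_0$ the sum is empty. The constants $-G^{\EM}_{m,F}(1)+f(0)$ survive unchanged, giving the leading two terms of the claimed right-hand side of \eqref{eq:seriesEM}.

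Finally, for the remainder, the formula \eqref{eq:R^EM} exhibits $R^{\EM}_m$ as the integral of $f^{(2m-1)}(x)B_{2m-1}(x-\lfloor x\rfloor)$ over $[0,n-1]$. Since $B_{2m-1}$ is bounded on $[0,1]$ (e.g.\ by the explicit estimate preceding \eqref{eq:R^EM<}) and $|f^{(2m-1)}|$ is integrable on $[0,\infty)$ by \eqref{eq:REM unif}, dominated convergence gives $R^{\EM}_m\to R^{\EM}_{m,f}(\infty)$ as $n\to\infty$. Combining all three limits with the displayed identity delivers $f(0)-G^{\EM}_{m,F}(1)+R^{\EM}_{m,f}(\infty)$, and in particular shows that the limit defining $\sum_{k\ge0}^{\EM}f(k)$ exists. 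The only ``hard'' step is really the algebraic rearrangement that peels off $f(0)$ correctly; the convergence arguments are standard once \eqref{eq:f^^ to0} and \eqref{eq:REM unif} are in hand.
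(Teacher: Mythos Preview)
Your proof is correct and follows essentially the same route as the paper: rewrite $A^{\EM}_m$ as $G^{\EM}_{m,F}(n)-G^{\EM}_{m,F}(1)+f(0)$, subtract $G^{\EM}_{m_0,F}(n)$, and let $n\to\infty$ using \eqref{eq:f^^ to0} for the bracketed difference and the integrability \eqref{eq:REM unif} for the remainder. The only difference is cosmetic: you spell out the dominated-convergence step for $R^{\EM}_m\to R^{\EM}_{m,f}(\infty)$, whereas the paper just cites \eqref{eq:REM unif}.
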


The limit $\sum_{k\ge0}^\EM f(k)$ in \eqref{eq:seriesEM} may be referred to as the (generalized) sum of the possibly divergent series $\sum_{k=0}^\infty f(k)$ by means of the EM formula \eqref{eq:EM}. 

\begin{remark}\label{rem:m_0}
Suppose that the function $f$ satisfies conditions of Proposition~\ref{prop:cauchy}. 
Then, in view of the bound \eqref{eq:der-bound} in the proof of Proposition~\ref{prop:cauchy}, conditions \eqref{eq:f^ to0}, \eqref{eq:R unif}, \eqref{eq:f`to0}, and \eqref{eq:REM unif} will all hold if $2m_0>1+\la$ and $m>m_0$. 
\end{remark}

A curious fact is that the centering/stabilizing terms, $G_{m_0,F}(n)$ in \eqref{eq:series} and $G^\EM_{m_0,F}(n)$ in \eqref{eq:seriesEM}, are asymptotically interchangeable. More specifically, one has 
 
\begin{proposition}\label{prop:same}
Let the conditions of Propositions~\ref{prop:series} and \ref{prop:seriesEM} hold. Then  
\begin{equation}\label{eq:same}
	G_{m_0,F}(n)-G^\EM_{m_0,F}(n)\underset{n\to\infty}\longrightarrow0,  
\end{equation}
and hence the generalized sums in \eqref{eq:series} and \eqref{eq:seriesEM} are equal to each other: 
\begin{equation}\label{eq:Alt=EM}
\sum_{k\ge0}^\Alt f(k)=\sum_{k\ge0}^\EM f(k). 	
\end{equation}
Moreover, $G_{m,P}(n)=G^\EM_{m,P}(n)$ for any polynomial $P$ of degree $\le2m_0-1$ and any $n$. 
\end{proposition}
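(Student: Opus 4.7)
The plan is to match $G_{m_0,F}(n)$ against $G^\EM_{m_0,F}(n)$ by Taylor expanding $F$ about the point $u:=n-1$. First I would expand each summand $F(n-1+j/2-i)$ in \eqref{eq:G} as $\sum_{p=0}^{2m_0-1}\frac{F^{(p)}(u)}{p!}(j/2-i)^p$ plus a Taylor remainder of order $2m_0$. Since $|j/2-i|\le m_0/2$ for $1\le j\le m_0$ and $0\le i\le j-1$, and since $F^{(2m_0)}=f^{(2m_0-1)}$, the absolute value of this remainder is uniformly bounded (after weighting by $|\ga_{m_0,j}|$ and summing) by a constant times $\sup_{|t|\le m_0/2}|f^{(2m_0-1)}(u+t)|$, which tends to $0$ as $n\to\infty$ by hypothesis \eqref{eq:f^ to0}.

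Collecting terms, $G_{m_0,F}(n)=\sum_{p=0}^{2m_0-1}\frac{F^{(p)}(u)}{p!}\,g_p+o(1)$, where $g_p:=\sum_{j=1}^{m_0}\ga_{m_0,j}\sum_{i=0}^{j-1}(j/2-i)^p$. The key algebraic step is to identify $g_p$ with the Bernoulli polynomial $B_p$ evaluated at $1$: expanding $(j/2-i)^p=\sum_{k=0}^p\binom{p}{k}(j/2-i-1)^k$ and invoking identity \eqref{eq:=B_p} with $m=m_0$ on each inner sum (legitimate because $k\le p\le 2m_0-1$) yields $g_p=\sum_{k=0}^p\binom{p}{k}B_k=B_p(1)$. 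Using the standard values $B_p(1)=B_p$ for $p\ne 1$ and $B_1(1)=\frac12$, together with $B_{2j+1}=0$ for $j\ge 1$, the finite sum $\sum_{p=0}^{2m_0-1}\frac{F^{(p)}(u)}{p!}g_p$ collapses exactly to $F(u)+\frac12 F'(u)+\sum_{j=1}^{m_0-1}\frac{B_{2j}}{(2j)!}F^{(2j)}(u)=G^\EM_{m_0,F}(n)$, proving \eqref{eq:same}. Subtracting the defining identities \eqref{eq:series} and \eqref{eq:seriesEM} then gives $\sum_{k\ge0}^\Alt f(k)-\sum_{k\ge0}^\EM f(k)=\lim_{n\to\infty}[G^\EM_{m_0,F}(n)-G_{m_0,F}(n)]=0$, which is \eqref{eq:Alt=EM}.

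For the moreover clause, I would rerun the same argument with $m_0$ replaced by $m$ and $F$ replaced by a polynomial $P$ of degree at most $2m_0-1\le 2m-1$. Because $P^{(q)}\equiv 0$ for $q\ge 2m$, the Taylor expansion of $P$ about $n-1$ is exact up to order $2m-1$, so the remainder vanishes identically; the same $g_p=B_p(1)$ computation (now valid in the range $p\le 2m-1$) yields the exact equality $G_{m,P}(n)=G^\EM_{m,P}(n)$ for every $n$, not merely in the limit.

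The principal technical point is the unit-shift bookkeeping: the Taylor expansion is performed about $n-1$, so the argument of $F$ in \eqref{eq:G} differs from $n-1$ by $j/2-i$, whereas identity \eqref{eq:=B_p} is stated in terms of $j/2-i-1$. The binomial expansion that converts the former into the latter is precisely what promotes $B_p$ into $B_p(1)$, and it is essential that $B_1(1)=\frac12$ (rather than $B_1=-\frac12$) delivers exactly the coefficient $\frac12$ of $F'(n-1)$ appearing in \eqref{eq:G^EM_m}; any off-by-one or sign mishap would break the term-by-term cancellation on which the entire argument rests.
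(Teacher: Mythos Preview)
Your proof is correct and follows essentially the same approach as the paper: Taylor-expand $F$ about $n-1$, invoke identity \eqref{eq:=B_p} to identify the resulting coefficients with Bernoulli numbers, and control the remainder via hypothesis \eqref{eq:f^ to0}. The paper organizes the argument slightly differently---it first proves the polynomial (``moreover'') case by checking both $G_{m,P_\be}(n)$ and $G^\EM_{m,P_\be}(n)$ equal $B_\be$ for the basis $P_\be(x)=(x-n)^\be$, and then reduces the general case to it by subtracting off the Taylor polynomial---whereas you compute directly and recover the polynomial case as a remainder-free corollary; the mathematical substance is the same.
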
 

Proposition~\ref{prop:same} suggests some curious ``objectivity'' in summing possibly divergent series, when the two seemingly quite different methods of summation yield the same result. 
Note that each of the generalized sums $\sum_{k\ge0}^\Alt f(k)$ and $\sum_{k\ge0}^\EM f(k)$ in \eqref{eq:series} and \eqref{eq:seriesEM} depends on the choice of the additive constant in the expression of an antiderivative $F$ of $f$ and is thus similar to the indefinite integral $\int f(x)\dd x$; yet, these two generalized sums are equal to each other for any choice of an antiderivative $F$ of $f$, provided that the conditions of Propositions~\ref{prop:series} and \ref{prop:seriesEM} hold. 

Further, one may note that 
\begin{equation*}
	\Big[\sum_{k\ge0}^\Alt f(k)=\Big]\sum_{k\ge0}^\EM f(k)=F(0)+\sum_{k\ge0}^\Ra f(k), 
\end{equation*}
again provided that the conditions of Propositions~\ref{prop:series} and \ref{prop:seriesEM} hold, where 
$\sum_{k\ge0}^\Ra f(k)$ denotes the Ramanujan constant of the series $\sum_{k=0}^\infty f(k)$. 
Cf.\ e.g.\ the expression for the Ramanujan constant $\sum_{k\ge1}^{\mathcal R} f(k)$ of the series $\sum_{k=1}^\infty f(k)$ in formula (23) in \cite{candel-etal}; to match the expression on the right-hand side of formula \eqref{eq:seriesEM} in the present paper with that in \cite[(23)]{candel-etal}, one should accordingly replace there $N$, $\partial^{k-1}f(1)$, and $\int_1^\infty$ by $2m-1$,  $\partial^{k-1}f(0)
[=f^{(k-1)}(0)]$, and $\int_0^\infty$, respectively, and also recall that $B_1=-1/2$ and $B_3=B_5=\dots=0$. 
So, choosing the antiderivative $F$ of $f$ determined by the condition $F(0)=0$, we would have $\sum_{k\ge0}^\Alt f(k)=\sum_{k\ge0}^\EM f(k)=\sum_{k\ge0}^\Ra f(k)$. 

However, such a choice of $F$ may not always be the most natural one. 
For instance, by \eqref{eq:Alt=EM}, \eqref{eq:seriesEM}, and \eqref{eq:zeta=Z}, 
$\sum_{k\ge0}^\Alt f(k)=\sum_{k\ge0}^\EM f(k)=\zeta(p,\de)$ for $f=f_{p,\de}$ as in \eqref{eq:f_p,de}, $F=F_{p,\de}$ as in \eqref{eq:F_p,de}, $p\in\CC\setminus\{1\}$, and $\de\in\CC\setminus(-\infty,0]$, whereas $\sum_{k\ge0}^\Ra f_{p,\de}(k)=\zeta(p,\de)+F_{p,\de}(0)=\zeta(p,\de)+\tfrac1{1-p}\,\de^{1-p}\ne\zeta(p,\de)$ (cf.\ \cite[formula~(52)]{candel-etal}. 
Cf.\ also Remark~\ref{rem:m_0=1}, which suggests that, in the case when the series $\sum_{k=0}^\infty f(k)$ converges, the natural choice of $F$ will usually be given by the condition $F(\infty)=0$, rather than $F(0)=0$. 


\medskip

To compute the generalized sums 
$\sum_{k\ge0}^\Alt f(k)$ and $\sum_{k\ge0}^\EM f(k)$ effectively, one has to make sure that the remainders $R_{m,f}(\infty)$ and $R^\EM_{m,f}(\infty)$ can be made arbitrarily small. It can be seen that 
the bounds in \eqref{eq:|R|<}--\eqref{eq:<M} and \eqref{eq:R^EM<} are rather tight. Therefore, usually the only way to ensure that the remainders $R_{m,f}(\infty)$ and $R^\EM_{m,f}(\infty)$ be small will be to make the high-order derivatives $f^{(2m)}$  and $f^{(2m-1)}$ small. This can be achieved by the following simple trick. 

For any function $h\colon\R\to\R$ and any real $c$, let $h_c$ denote the $c$-shift of $h$ defined by the formula 
\begin{equation*}
	h_c(x):=h(x+c)
\end{equation*}
for all real $x$. Let now $c$ be any natural number, and suppose the conditions in Proposition~\ref{prop:series} hold. Note that 
\begin{equation}\label{eq:G_c}
G_{m,F}(n+c)=G_{m,F_c}(n)	
\end{equation}
for all natural $n$. So, 
\begin{equation*}
\sum_{k=0}^{n+c-1} f(k)-G_{m_0,F}(n+c)
=\sum_{k=0}^{c-1} f(k)+\Big(\sum_{k=0}^{n-1} f_c(k)-G_{m_0,F_c}(n)\Big).   
\end{equation*}
Letting now $n\to\infty$ and using \eqref{eq:G_c} again (now with $0$ in place of $n$), we see 
that Proposition~\ref{prop:series} immediately yields 

\begin{corollary}\label{cor:series}
Under the conditions in Proposition~\ref{prop:series}, for any natural $c$ 
\begin{equation}\label{eq:series,c}
\sum_{k\ge0}^\Alt f(k)
=
	\sum_{k=0}^{c-1} f(k)-G_{m,F}(c)-R_{m,f_c}(\infty).  
\end{equation}
\end{corollary}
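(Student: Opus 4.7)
The plan is to reduce the corollary directly to Proposition~\ref{prop:series} applied to the shifted function $f_c$. Since $c$ is a natural number, $f_c(x)=f(x+c)$ inherits the hypotheses \eqref{eq:f^ to0} and \eqref{eq:R unif} from $f$, because each derivative $f_c^{(j)}(x)=f^{(j)}(x+c)$ and uniform convergence of $\sum_{k\ge0}f^{(2m)}(k+w)$ for $w$ in a bounded interval implies the same for the $c$-shifted sum. Moreover, the natural antiderivative of $f_c$ is $F_c$, since $F_c'(x)=F'(x+c)=f(x+c)=f_c(x)$. So Proposition~\ref{prop:series}, applied with $(f,F)$ replaced by $(f_c,F_c)$, yields
\begin{equation*}
\sum_{k\ge0}^\Alt f_c(k)=\lim_{n\to\infty}\Big(\sum_{k=0}^{n-1}f_c(k)-G_{m_0,F_c}(n)\Big)=-G_{m,F_c}(0)-R_{m,f_c}(\infty).
\end{equation*}

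The central observation, which the preparatory text already records, is the shift identity \eqref{eq:G_c}: $G_{m,F}(n+c)=G_{m,F_c}(n)$ for any natural $n$ (and in particular for $n=0$). This is immediate from the definition \eqref{eq:G}, since $G_{m,F}(n+c)$ is built from the values $F(n+c-1+j/2-i)=F_c(n-1+j/2-i)$, and the same identity holds with $m_0$ in place of $m$. Specializing to $n=0$ gives $G_{m,F_c}(0)=G_{m,F}(c)$, which is the term appearing on the right-hand side of \eqref{eq:series,c}.

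Next I would write down the straightforward splitting
\begin{equation*}
\sum_{k=0}^{n+c-1}f(k)-G_{m_0,F}(n+c)=\sum_{k=0}^{c-1}f(k)+\Big(\sum_{k=0}^{n-1}f_c(k)-G_{m_0,F_c}(n)\Big),
\end{equation*}
which combines the telescoping $\sum_{k=0}^{n+c-1}f(k)=\sum_{k=0}^{c-1}f(k)+\sum_{k=0}^{n-1}f_c(k)$ with the $m_0$-version of the shift identity. Letting $n\to\infty$, the left-hand side tends to $\sum_{k\ge0}^\Alt f(k)$ by the very definition \eqref{eq:series}, while the parenthesized expression on the right tends to $\sum_{k\ge0}^\Alt f_c(k)$. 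Substituting the evaluation from the first paragraph and the identity $G_{m,F_c}(0)=G_{m,F}(c)$ then produces \eqref{eq:series,c}.

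There is no real obstacle here; the entire argument is already essentially sketched in the text preceding the statement. The only point requiring a moment's care is the verification that the shift identity \eqref{eq:G_c} holds for \emph{both} $m$ and $m_0$ (needed to rewrite $G_{m_0,F}(n+c)$ and then later $G_{m,F_c}(0)$), and that the hypotheses of Proposition~\ref{prop:series} transfer from $f$ to $f_c$ unchanged; both facts are transparent consequences of the definitions.
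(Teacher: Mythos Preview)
Your proposal is correct and follows essentially the same approach as the paper: apply Proposition~\ref{prop:series} to the shifted pair $(f_c,F_c)$, use the shift identity \eqref{eq:G_c} at both $m_0$ (to rewrite the centering term in the splitting) and $m$ (to identify $G_{m,F_c}(0)=G_{m,F}(c)$), and pass to the limit $n\to\infty$. The paper's proof is exactly this argument, sketched in the paragraph preceding the corollary.
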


That is, \eqref{eq:series} holds if $G_{m,F}(0)$ and $R_{m,f}(\infty)$ are replaced there by $-\sum_{k=0}^{c-1} f(k)+G_{m,F}(c)$ and $R_{m,f_c}(\infty)$, respectively. 
Under the condition \eqref{eq:R unif}, one can indeed make the remainder $R_{m,f_c}(\infty)$ arbitrarily small by taking a large enough $c$. The extra price to pay for this is the need to compute the additional term $\sum_{k=0}^{c-1} f(k)$.

Similarly, Proposition~\ref{prop:seriesEM} immediately yields 

\begin{corollary}\label{cor:seriesEM}
Under the conditions in Proposition~\ref{prop:seriesEM}, for any natural $c$ 
\begin{equation}\label{eq:series,c,EM}
\sum_{k\ge0}^\EM f(k)
=
	\sum_{k=0}^{c
	} f(k)-G^\EM_{m,F}(c+1)+R^\EM_{m,f_c}(\infty).  
\end{equation}
\end{corollary}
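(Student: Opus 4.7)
The plan is to mirror exactly the derivation given for Corollary \ref{cor:series}, using the shift trick, but with all objects replaced by their EM analogues. The starting observation is that the centering term $G^\EM_{m,F}$ intertwines with the $c$-shift in the same way as $G_{m,F}$. Indeed, if $F_c(x):=F(x+c)$ then $F_c^{(j)}(x)=F^{(j)}(x+c)$ for every $j$, and thus, directly from the definition \eqref{eq:G^EM_m},
\begin{equation*}
G^\EM_{m,F_c}(n)=F(n-1+c)+\tfrac12 F'(n-1+c)+\sum_{j=1}^{m-1}\tfrac{B_{2j}}{(2j)!}F^{(2j)}(n-1+c)=G^\EM_{m,F}(n+c).
\end{equation*}
This is the exact EM counterpart of identity \eqref{eq:G_c}.

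Next, I would split the partial sum along the shift, just as in the display preceding Corollary~\ref{cor:series}:
\begin{equation*}
\sum_{k=0}^{n+c-1}f(k)-G^\EM_{m_0,F}(n+c)=\sum_{k=0}^{c-1}f(k)+\Bigl(\sum_{k=0}^{n-1}f_c(k)-G^\EM_{m_0,F_c}(n)\Bigr),
\end{equation*}
where the displayed EM-shift identity above (with $m_0$ in place of $m$) is what makes the bracketed piece match the definition of $\sum^\EM_{k\ge0}f_c(k)$ in \eqref{eq:seriesEM}. Letting $n\to\infty$ then yields
\begin{equation*}
\sum_{k\ge0}^\EM f(k)=\sum_{k=0}^{c-1}f(k)+\sum_{k\ge0}^\EM f_c(k).
\end{equation*}

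Finally, I would apply Proposition~\ref{prop:seriesEM} directly to the shifted function $f_c$. Because $f_c^{(j)}(x)=f^{(j)}(x+c)$, hypotheses \eqref{eq:f^^ to0} and \eqref{eq:REM unif} for $f$ immediately imply the same hypotheses for $f_c$ (with $F_c$ as antiderivative), so the proposition applies and gives
\begin{equation*}
\sum_{k\ge0}^\EM f_c(k)=f_c(0)-G^\EM_{m,F_c}(1)+R^\EM_{m,f_c}(\infty)=f(c)-G^\EM_{m,F}(c+1)+R^\EM_{m,f_c}(\infty),
\end{equation*}
using $G^\EM_{m,F_c}(1)=G^\EM_{m,F}(c+1)$ from the intertwining identity with $n=1$. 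Adding the term $f(c)$ to $\sum_{k=0}^{c-1}f(k)$ produces $\sum_{k=0}^{c}f(k)$, which yields exactly formula \eqref{eq:series,c,EM}.

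I do not foresee any genuine obstacle: everything is a transcription of the Alt argument with $G$ replaced by $G^\EM$ and $R$ by $R^\EM$. The only point that merits mild care is verifying that the intertwining $G^\EM_{m,F}(\cdot+c)=G^\EM_{m,F_c}(\cdot)$ holds for all the relevant arguments (both $n$ in the limit and the single value $n=1$ at the end), and that Proposition~\ref{prop:seriesEM}'s hypotheses are inherited by $f_c$ — both of which follow at once from $f_c^{(j)}=f^{(j)}(\cdot+c)$.
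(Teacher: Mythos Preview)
Your proposal is correct and follows essentially the same approach as the paper, which simply states that Proposition~\ref{prop:seriesEM} ``immediately yields'' the corollary by the same shift-trick argument used to derive Corollary~\ref{cor:series}. Your write-up just makes explicit the EM analogue of \eqref{eq:G_c} and the absorption of the extra term $f(c)$ into the partial sum.
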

Under the condition \eqref{eq:REM unif}, one can make the remainder $R^\EM_{m,f_c}(\infty)$ arbitrarily small by taking a large enough $c$. 

One can view \eqref{eq:series,c} and \eqref{eq:series,c,EM} as extrapolation formulas (in a broad enough sense), with the correction terms $-G_{m,F}(c)$ and $-G^\EM_{m,F}(c+1)$ added to the partial sums $\sum_{k=0}^{c-1} f(k)$ and $\sum_{k=0}^c f(k)$ of the series $\sum_{k=0}^\infty f(k)$ 
to obtain better approximations to its generalized sum $\sum_{k\ge0}^\Alt f(k)=\sum_{k\ge0}^\EM f(k)$.  

The special case of 
Corollary~\ref{cor:seriesEM} with $f(x)=\frac1{x+1}$ for $x\ge0$ was, essentially, the basis of Knuth's method in \cite{knuth62} to compute a decimal approximation to Euler's constant; cf.\ \cite[formula~(7)]{knuth62}. In that case, one can take 
$m_0=1$ and $F(x)=\ln(x+1)$ for $x\ge0$. Then the ``stabilizers'' in \eqref{eq:series,c} and  \eqref{eq:series,c,EM} will be $G_{m_0,F}(n)=\ln(n+1/2)$ and $G^\EM_{m_0,F}(n)=\ln n+\frac1{2n}
$, 
respectively, which in particular illustrates \eqref{eq:same}. 


\section{Choosing 
\texorpdfstring{$c$}{c} and \texorpdfstring{$m$}{m} 
for a desired accuracy}\label{c,m}


As was noted, the integer $c$ should be taken to be large enough to ensure that the remainders $R_{m,f_c}(\infty)$ and $R^\EM_{m,f_c}(\infty)$ in \eqref{eq:series,c} and  \eqref{eq:series,c,EM} be small. How large $c$ must be depends on the choice of $m$. In turn, one can see that the order $m$ of the approximation formulas \eqref{eq:} and \eqref{eq:EM}
should also be large enough for the remainders to be small. 

In the following two subsections, we shall consider these theses in some detail. As we shall see, there is a certain balance between the required values of $m$ and $c$. 
If the value of $m$ is too small, then the required value of $c$ must be too large to ensure the desired accuracy. 
Thus, trying to decrease $m$ to reduce the volume of calculations to compute $G_{m,F}(c)$ or $G^\EM_{m,F}(c+1)$ in \eqref{eq:series,c} and \eqref{eq:series,c,EM} according to \eqref{eq:G}--\eqref{eq:G_m,alt2} and \eqref{eq:G^EM_m} may result in an increased volume of calculations to compute the sums $\sum_{k=0}^{c-1} f(k)$ and $\sum_{k=0}^{c} f(k)$ in \eqref{eq:series,c} and \eqref{eq:series,c,EM}. 

Suppose that $m\ge2$ and all the conditions in Proposition~\ref{prop:cauchy} hold with $\th_0=\pi/2$ -- except possibly the condition $a\ge(m+3)/2$, so that here 
\begin{equation}\label{eq:S=Pi}
S=\Pi_{-a}^+:=\{z\in\CC\colon\Re z\ge-a\}. 	
\end{equation}
Let $c$ be any natural number such that $a+c\ge(m+3)/2$ -- this latter condition replacing the condition $a\ge(m+3)/2$. 
Condition \eqref{eq:|f|<} will hold (now for all $z\in\Pi_{-c-a}^+$) with $f_c$ and $a_c:=a+c$ in place of $f$ and $a$, respectively. 

\subsection{Choosing \texorpdfstring{$c$}{c} and \texorpdfstring{$m$}{m} nearly optimally in \texorpdfstring{\eqref{eq:series,c}}{} 
}\label{c,m,alt}

In view of Corollary~\ref{cor:R bound}, 
\begin{align}
	|R_{m,f_c}(\infty)|\le R^*_{m,c}:=R^*_{m,c;\mu,\la,a}
	&:=
	\frac{1.001\pi\mu3^\la}{(2m+1)(2m-1-\la)}\,\Big(\frac{\La_*}4\Big)^m 
	\frac{m^{2m+1}}{(c+a-m/2-1/2)^{2m-1-\la}} \label{eq:R_c,infty}\\ 
	&=\Big(\frac{\ka m}{c-m/2}\Big)^{(2+o(1))m} \notag
\end{align}
for $m\to\infty$ and $c-m/2\ge(\ka+\vp)m$ for some fixed real $\vp>0$, where 
\begin{equation}\label{eq:ka}
	\ka:=\sqrt{\frac{\La_*}4}=0.27754\dots. 
\end{equation}
So, to ensure that 
\begin{equation*}
	|R_{m,f_c}(\infty)|\le\tfrac12\,10^{-d} 
\end{equation*}
for a large enough natural $d$, an appropriate choice of $c$ will be 
as follows: 
\begin{equation}\label{eq:c=}
	c=\lceil c_{d,m}\rceil\approx m/2+\ka m10^{d/(2m)},  
\end{equation}
where $c_{d,m}=c_{d,m;\mu,\la,a}$ is the root $c$ of the equation $R^*_{m,c}=\frac12\,10^{-d}$. 

Let now $T_f=T_f(d)$ denote the time needed to compute one value of the function $f$. We suppose here that this time does not depend significantly on the value of the argument of $f$ in the range $\{0,\dots,c-1\}$; otherwise, take the average over the range. 

However, $T_f=T_f(d)$ will depend on the working accuracy needed to attain the desired accuracy of $\tfrac12\,10^{-d}$ of the ultimate result. We shall see at the end of this subsection that, for large $d$, this working accuracy will have to be $10^{-d_1}$, where $d_1$ exceeds $d$ only by a summand \,$\asymp\ln d$. 
  
Similarly introduce $T_F=T_F(d)$, the time ``cost'' per value of the antiderivative $F$ of $f$, and $T_\tau=T_\tau(d)$, the time ``cost'' per value of $\tau_{\cdot,\cdot}$ in \eqref{eq:G_m,alt2}. 
Then the total time needed to compute the approximate value $\sum_{k=0}^{c-1} f(k)-G_{m,F}(c)$ of the generalized sum $\sum_{k\ge0}^\Alt f(k)$ in \eqref{eq:series,c} is 
\begin{equation}\label{eq:T}
	\T\approx T_f c+T_\tau m+T_F\times2m
	\approx \T(m):=K m\big(1+\om\,10^{d/(2m)}\big)
\end{equation}
in view of \eqref{eq:c=}, where 
\begin{equation}\label{eq:om}
	K:=T_f/2+T_\tau+2T_F\quad\text{and}\quad \om:=\frac{\ka T_f}K. 
\end{equation}
We can now find an approximately optimal value of $m$ by minimizing $\T(m)$ in $m$, and then choose $c$ in accordance with \eqref{eq:c=}. 
Assuming that $T_F\ge T_f$ and hence $K>2.5T_f$, 
we will have 
\begin{equation}\label{eq:om<.1}
	\om<\frac\ka{2.5}\approx0.1. 
\end{equation}

It is easy to see that $\T(m)$ is strictly convex in $m\ge1$, $\T(m)\to\infty$ as $m\to\infty$, and 
\begin{equation*}
	\T'(1)/K=1+\om10^{d/2}(1-\tfrac d2\,\ln10)<1-10^{252}\om<0
\end{equation*}
provided that $d\ge500$ (which will be the case in the examples to be considered in this paper) and $\om>10^{-252}$. The latter condition will hold in all realistic situations -- when the time ``cost'' per value of the antiderivative $F$ is not quite prohibitively high. 
Therefore, $\T(m)$ attains its minimum in $m\ge1$ at the unique root $m=m_\om\in(1,\infty)$ of the equation $\T'(m)=0$, which can be rewritten as 
\begin{equation}\label{eq:T'(m)=0}
1+\om 10^{d/(2m)}(1-\tfrac d{2m}\,\ln10)=0. 	
\end{equation}
This root is given by the formula 
\begin{equation}\label{eq:m_om}
	m_\om=\frac{\ln10}{1+L(\tfrac{1}{e \om})}\,\frac d2
	\approx
	\frac{\ln10}{2\ln\tfrac{1}{\om}}\,d,   
\end{equation}
where $L$ is the Lambert product-log function, so that for all positive real $z$ and $u$ one has $z=L(u)\iff u=ze^z$; 
the approximate equalities here involving $\om$ hold under the natural assumption that $\om$ is small; recall \eqref{eq:om<.1} and the corresponding discussion.  
For $m=m_\om$, it follows from \eqref{eq:T'(m)=0} that 
\begin{equation*}
	\om 10^{d/(2m)}=\frac1{\frac d{2m}\,\ln10-1}
	=\frac1{L\big(\tfrac{1}{e \om}\big)}
	\approx
	\frac1{\ln\frac1{e\om}}.  
\end{equation*}
So, by \eqref{eq:c=}, \eqref{eq:m_om}, and \eqref{eq:T}, an appropriate choice of $c$ will be
\begin{equation}\label{eq:c approx}
c
\approx m_\om\Big(\frac12+\frac\ka{\om L\big(\tfrac{1}{e \om}\big)}\Big)
\approx \frac{\ln10}{1+L(\tfrac{1}{e \om})}\,\Big(\frac12+\frac\ka{\om L\big(\tfrac{1}{e \om}\big)}\Big)\,\frac d2
\approx\frac{\ka\ln10}{2\om\ln\frac1\om\,\ln\frac1{e\om}}\, d 	
\approx\frac{\ka\ln10}{2\om\ln^2\frac1\om}\, d, 
\end{equation}
whence  
\begin{equation}\label{eq:T sim}
\T(m)
=\frac{K\ln 10}{L(\tfrac{1}{e \om})}\,\frac d2
\approx \frac{K\ln 10}{2\ln\frac1\om}\,d. 	
\end{equation}

One can see that it is rather straightforward to find nearly optimal values of $m$ and $c$ for  \eqref{eq:series,c}. Formulas \eqref{eq:m_om} and \eqref{eq:c approx} show that such values of $m$ and $c$ are both approximately proportional to the desired number $d$ of digits of accuracy, and the proportionality coefficients are rather moderate in size unless $\om$ is very small. 

Usually, $T_\tau$ will be small compared to $T_f$ and $T_F$; then, $\om$ will be very small only if $T_F$ is much greater than $T_f$. 
In particular, in the rather typical case when $T_F\approx T_f>>T_\tau$, nearly optimal values of $m$ and $c$ are as follows: $m\approx0.55\,d$ and $c\approx1.5\,d$. 

Also, according to \eqref{eq:T sim} and \eqref{eq:om}, the needed time-per-digit $\T(m)/d$ when using 
formula \eqref{eq:series,c} will be approximately proportional to the ``aggregated'' time ``cost-per-value'' $K=T_f/2+T_\tau+2T_F$ for values of $f$, $\tau_{\cdot,\cdot}$, and $F$ -- again with a proportionality coefficient \big($\frac{\ln 10}{2\ln\frac1\om}$\big) that is moderate in size unless $\om$ is very small.  

Again, by \eqref{eq:m_om} and \eqref{eq:c approx}, for optimal values of $m$ and $c$ we have $m\asymp d$ and $c\asymp d$. Hence, the needed working accuracy $d_1$ is $d+O(\log_{10}d)$, which is $\sim d$ for large $d$. 
Mathematica keeps a rigorous record of the accuracy in its calculations with arbitrary-precision numbers. 
From the Mathematica tutorial/ArbitraryPrecisionNumbers:

\begin{quote}
[...] the Wolfram Language keeps track of which digits in your result could be affected by unknown digits in your input. It sets the precision of your result so that no affected digits are ever included. This procedure ensures that all digits returned by the Wolfram Language are correct, whatever the values of the unknown digits may be. 
\end{quote}  

\subsection{Choosing \texorpdfstring{$c$}{c} and \texorpdfstring{$m$}{m} in \texorpdfstring{\eqref{eq:series,c,EM}}{} for a desired accuracy}\label{c,m,EM}
In this subsection, we still be assuming the conditions stated in the paragraph containing \eqref{eq:S=Pi}. 
First here, instead of Corollary~\ref{cor:R bound}, let us use \eqref{eq:R_m^EM<} to get 
\begin{align}
	|R^\EM_{m,f_c}(\infty)|\le R^{\EM,*}_{m,c}:=R^{\EM,*}_{m,c;\mu,\la,a}
	&:=
	\frac{2.02\mu\,3^\la}{2m-2-\la}\,\frac{(2m-1)!}{(2\pi)^{2m-1}}\,
	\frac1{(c+a)^{2m-2-\la}} \label{eq:REM_c,infty} \\ 
	&=\Big(\frac{\ka^\EM m}{c}\Big)^{(2+o(1))m}  \notag 
\end{align}
for $m\to\infty$ and $c\ge(\ka^\EM+\vp)m$ for some fixed real $\vp>0$, 
where 
\begin{equation*}
	\ka^\EM:=\frac1{\pi e}\approx0.12. 
\end{equation*}
So, to ensure that 
\begin{equation*}
	|R^\EM_{m,f_c}(\infty)|\le\tfrac12\,10^{-d} 
\end{equation*}
for a large enough natural $d$, an appropriate choice of $c$ will be 
as follows: 
\begin{equation}\label{eq:c=,EM}
	c=\big\lceil c^\EM_{d,m}\big\rceil \approx\ka^\EM m10^{d/(2m)},  
\end{equation}
where 
$c^\EM_{d,m}=c^\EM_{d,m;\mu,\la,a}$ is the root $c$ of the equation $R^{\EM,*}_{m,c}=\frac12\,10^{-d}$. 

Let $T_f=T_f(d)$ still denote the time needed to compute one value of the function $f$. 

In comparison with dealing with formula \eqref{eq:series,c}, 
it will usually be much more difficult to find nearly optimal choices of $m$ and $c$ for the EM calculations according to \eqref{eq:series,c,EM} -- mainly because it is difficult to assess, especially in general terms, the time needed to compute the values of the derivatives 
\begin{equation}\label{eq:ders}
F^{(2j)}(c)=f^{(2j-1)}(c)\quad\text{for}\quad j=1,\dots,m-1,	
\end{equation}
needed in \eqref{eq:series,c}. 

For instance, suppose that the expression for the function $f$ contains the product $gh$ of two non-polynomial functions $g$ and $h$. Even if the time to compute the values $g^{(i)}(c)$ and $h^{(i)}(c)$ is not growing with $i$, still\ \;$\asymp j$ multiplications will be needed to compute the value $(gh)^{(j)}(c)=\sum_{i=0}^j\binom ji g^{(i)}(c)h^{(j-i)}(c)$ by the Leibniz formula. 
Here we are not taking into account the efforts to compute the $g^{(i)}(c)$'s, $h^{(i)}(c)$'s, and the binomial coefficients $\binom ji$.  
So, the time to compute all the derivatives \eqref{eq:ders} will be $\OG m^2$ -- which may be compared with the time $Km
$ to compute the values of one function, $F$, needed in \eqref{eq:series}.  

If the expression for the function $f$ contains the composition $g\circ h$ of two functions $g$ and $h$, then the derivatives of $g\circ h$ of required orders can be computed by a recursive version of the Fa\`a di Bruno formula: 
$(g\circ h)^{(j)}(c)=P_j$, where $P_j:=P_j(g_0,\dots,g_j,h_1,\dots,h_j)$ is a polynomial in $g_0,\dots,g_j,h_1,\dots,h_j$ of degree $j+1$ given recursively by the formulas $P_0(g_0):=g_0$ and 
\begin{equation*}
	P_{j+1}:=\Big(\sum_{i=0}^j \frac{\partial P_j}{\partial g_i}g_{i+1}\Big)h_1
	+\sum_{i=1}^j \frac{\partial P_j}{\partial h_i}h_{i+1}  
\end{equation*}
for $j=0,1,\dots$, 
and $g_i:=g^{(i)}(h(c))$ and $h_i:=h^{(i)}(c)$ for all $i$. 
So, if the functions $g$ and $h$ are non-polynomial, here as well the time to compute all the derivatives \eqref{eq:ders} will be $\OG m^2$ -- counting neither the time to compute the $g^{(i)}(h(c))$'s and $h^{(i)}(c)$'s, nor the time to compute the partial derivatives of the polynomials $P_j$. Note that the number of monomials in the polynomial $P_j$ equals the number of partitions of $j$, which is known \cite{hardy-ramanuj} to be asymptotic to $\frac1{4j\sqrt3}\,e^{c_0\sqrt j}$ as $j\to\infty$, where $c_0:=\pi\sqrt{2/3}=2.56\dots$. 
So, here the time to compute $f^{(j)}(c)$ will grow for large $j$ much faster than any power of $j$. Of course, such a difficult situation can quickly become much worse when the expression for $f$ is more complicated than just the product or the composition of two non-polynomial functions. 

To use the EM formula \eqref{eq:EM}--\eqref{eq:A^EM}, one also needs to compute the Bernoulli numbers $B_2,\dots,B_{2m-2}$. In Mathematica, for large enough $m$ this is done using the Fillebrown algorithm \cite{fillebrown}, which requires $\;\asymp m^2/\ln m$ multiplications of integers represented by $\O m\ln m$ bits, according to the analysis in \cite{fillebrown}; note the typo in the abstract in \cite{fillebrown}, where it should be $m^2/\log m$ in place of $m^2\log m$. 

We can now see that calculations by the EM formula will in most cases involve different computational strands of different degrees of growth in complexity, and those strands can be mixed in different proportions. 
The rates of growth within the different strands and the proportions between the strands will of course strongly depend on the function $f$, as well as on the desired number $d$ of digits of accuracy and the choice of $m$ and $c$. These rates and proportions may also significantly vary with $j$, the index in $f^{(j)}(c)$ and $B_{2j}$. 
However, in most case the time needed to compute the Bernoulli numbers $B_2,\dots,B_{2m-2}$ will be much less than that for the derivatives \eqref{eq:ders}.  

It should be clear from the above discussion that we can only offer a rough, tentative analysis pertaining to the choice of nearly optimal values of $m$ and $c$ in \eqref{eq:series,c,EM} in general -- assuming that the time to compute the derivatives \eqref{eq:ders} and the Bernoulli numbers $B_2,\dots,B_{2m-2}$ can be modeled in the considered range of values of $d$  
by an expression of the form $T_\der m^{2+\vp}$ for some positive real ``constants'' $\vp$ and $T_\der$;  $T_\der$ usually will, and $\vp$ may, depend on the desired accuracy $\frac12\,10^{-d}$.

So, the total time needed to compute the approximate value $\sum_{k=0}^{c-1} f(k)-G^\EM_{m,F}(c)$ of the generalized sum $\sum_{k\ge0}^\EM f(k)$ in \eqref{eq:series,c,EM} will be as follows:  
\begin{equation}\label{eq:T^EM}
\begin{aligned}
	\T^\EM&\approx \T^\EM(m):=\tT_f m10^{d/(2m)}+T_\der m^{2+\vp}, 
\end{aligned}	
\end{equation}
in view of \eqref{eq:c=,EM}, where $T_B$ is some positive constant and 
\begin{equation*}
	\tT_f:=\ka^\EM T_f.  
\end{equation*}

We can now find an approximately optimal value of $m$ by minimizing $\T^\EM(m)$ in $m$, and then choose $c$ in accordance with \eqref{eq:c=,EM}. 
It is easy to see that $\T^\EM(m)$ is strictly convex in $m\ge1$, $\T^\EM(m)\to\infty$ as $m\to\infty$, and 
\begin{equation}\label{eq:T',EM}
	\big(\T^\EM\big)'(m)=\tT_f 10^{d/(2m)}(1-d\ln10/(2m))+(2+\vp)T_\der m^{1+\vp}
\end{equation}
tends to $-\infty$ if $d\to\infty$ but $m$ stays bounded. So, $\T^\EM(m)$ is minimized in $m$ only when $m$ is the root of of the equation $\big(\T^\EM\big)'(m)=0$, and the minimizer $m$ tends to $\infty$; here and in the rest of this somewhat informal analysis, we assume that $d\to\infty$. 
If $m\to\infty$ but $d/m$ stays bounded, then, by \eqref{eq:T',EM}, $\big(\T^\EM\big)'(m)\to\infty$. So, for the minimizer $m$ of $\T^\EM(m)$, we have $m\to\infty$ and $d/m\to\infty$; hence, again by \eqref{eq:T',EM}, 
\begin{equation}\label{eq:10^}
	\tT_f 10^{d/(2m)}\,d\ln10/(2m)
	\sim (2+\vp)T_\der m^{1+\vp}, 
\end{equation}
whence $d/(2m)\sim
(1+\vp)\log_{10}m$ and  
\begin{equation}\label{eq:m sim}
	m\sim\frac{d}{2(1+\vp)\log_{10}d}.  
\end{equation}
It also follows from \eqref{eq:T^EM}, \eqref{eq:10^}, and \eqref{eq:m sim} that 
\begin{equation}\label{eq:T^EM sim}
	\T^\EM(m)\sim T_\der m^{2+\vp}\sim
	T_\der \Big(\frac{d}{2(1+\vp)\log_{10}d}\Big)^{2+\vp}
	\asymp \Big(\frac{d}{\log_{10}d}\Big)^{2+\vp},  
\end{equation}
which may be compared with $\T(m)	\asymp d$ according to \eqref{eq:T sim}. 

\section{Parallelization and memory use}\label{par}
It was made clear in Section~\ref{c,m} that both $m$ and $c$ should be large enough in order for the remainders $R_{m,f_c}(\infty)$ and $R^\EM_{m,f_c}(\infty)$ in \eqref{eq:series,c} and  \eqref{eq:series,c,EM} to be small. It is therefore important to consider whether calculations of the terms $\sum_{k=0}^{c-1} f(k)$ and $G_{m,F}(c)$ in \eqref{eq:series,c} and $\sum_{k=0}^{c}f(k)$ and $G^\EM_{m,F}(c+1)$ in \eqref{eq:series,c,EM} can be parallelized, for such large values of $m$ and $c$. 

Let $k_*$ denote the number of computer cores available for parallel computation. 
The terms $\sum_{i=0}^{c-1} f(i)$ in \eqref{eq:series,c} and $\sum_{i=0}^{c}f(i)$ in \eqref{eq:series,c,EM} can be easily computed about $k_*$ times as fast in parallel on the $k_*$ cores as on one such core or on a single-core CPU. To do such a parallel calculation, one can partition the index range, say $R:=\{0,\dots,c-1\}$, into $k_*$ sub-ranges $R_1,\dots,R_{k_*}$, each approximately of same size $\approx c/k_*$, compute each of the corresponding parts $S_k:=\sum_{i\in R_k} f(i)$ of the sum $\sum_{i=0}^{c-1} f(i)$ 
on (say) core $k$ of the $k_*$ parallelly engaged cores, and then quickly add the partial sums $S_1,\dots,S_{k_*}$. In Mathematica, such a calculation can be done by issuing the command \\ 
\verb!ParallelSum[SetAccuracy[f[i],d1],{i,0,c-1},Method->"CoarsestGrained"]!, \\ 
where \verb!d1! is the accuracy set for each summand $f(i)$. 
In distinction with \verb!"FinestGrained"!, the \verb!"CoarsestGrained"! method minimizes the overhead caused by data interchange between the controlling process (master kernel) and the subordinate processes (subkernels, running on available parallel cores). 

However, there seems to be no way in general to parallellize the calculation of the consecutive derivatives  
$F^{(2j)}(c)=f^{(2j-1)}(c)$ for $j=1,\dots,m-1$ 
in the expression 
of the term 
$G^\EM_{m,F}(c+1)$ in \eqref{eq:series,c,EM}, defined by \eqref{eq:G^EM_m}. 
Usually, this will be the bottleneck in using the EM summation formula for high-precision calculations. 

It is possible to compute the Bernoulli numbers in parallel \cite{harvey10}. However, such parallelization is not done in Mathematica, and it has not been done in the computer experiments to be described in the examples in Section~\ref{examples} -- 
mainly because, as was noted, except for a very narrow set of functions $f$, the time needed to compute the Bernoulli numbers $B_2,\dots,B_{2m-2}$ will be much less than that for the derivatives \eqref{eq:ders}. 
The usually very large amount of time needed to compute those derivatives certainly precludes values of $m>\frac12\,10^4$. On the other hand, for values of $m\le\frac12\,10^4$, the only execution time reported in \cite{harvey10} is for $B_{2m}=B_{10^4}$, with $m=\frac12\,10^4$ -- only for a one-core calculation, which took $0.25$ sec (on a 16-core 2.6 GHz AMD Opteron (64-bit) machine with
96 GB RAM, running Ubuntu Linux). In comparison, it took Mathematica just about $0.05$ sec 
to compute the same number, $B_{10^4}$ (on a roughtly comparable 12-core 2.30 GHz Intel Xeon (64-bit) machine with
128 GB RAM, running Windows 7). 
The programming in \cite{harvey10} was done in C++. 
Relevant here may be the following 
quote from \cite{c++vsMathca} : 
`` 
Mathematica is only about three times slower than C++, but only after a considerable
rewriting of the code to take advantage of the peculiarities of the language. The baseline
version of our algorithm in Mathematica is considerably slower.''  
The code in \cite{harvey10} may be significantly more complicated than the code for the Bernoulli numbers used in Mathematica, so that the overhead caused by the complexity of the code used in \cite{harvey10} may be relatively too large for not too large values of $m$.  
Note also that it usually takes about $2$ to $4$ sec to launch several kernels in Mathematica. For all these reasons, it seems to make little (if any) sense to parallelize the calculation of the Bernoulli numbers $B_2,\dots,B_{2m-2}$ when $m$ is not very large.  

In contrast with the term $G^\EM_{m,F}(c+1)$ in \eqref{eq:series,c,EM}, the calculation of the term $G_{m,F}(c)$ in \eqref{eq:series,c} can be almost fully parallelized. For large $m$, of the three expressions \eqref{eq:G}--\eqref{eq:G_m,alt2} for $G_{m,F}(c)$, 
one should use the one in \eqref{eq:G_m,alt2} as containing the fewest number ($m$) of summands, versus $\sim2m$ summands in \eqref{eq:G_m,alt1} and $\sim m^2/2$ summands in \eqref{eq:G}; cf.\ Remark~\ref{rem:A_m-alt}. 
Next, the values of the function $F$ in
\eqref{eq:G_m,alt2} (with $n=c$) can of course be easily computed in parallel, on several cores. 

Also, with a little trick, the calculations of the coefficients 
$$\tau_j:=\tau_{m,j}$$ 
in \eqref{eq:G_m,alt2} can be almost entirely parallelized, and this can be done so that very little memory space is needed. 
Indeed, assume for simplicity that the large natural number $m$ is even. Let $0=m_0,m_1,\dots,m_{k_*}=m$ be even numbers such that 
\begin{equation}\label{eq:ell}
	\ell_k:=m_k-m_{k-1}\approx m/k_*, 
\end{equation}
so that $\ell_k$ is even; here and in what follows, $k$ is an arbitrary number in the set $\{1,\dots,k_*\}$. 
\big(Note that in this section the meaning of $m_0$ is quite different from that in other parts of this paper -- such as formula \eqref{eq:series,c}, for example.\big) 
 
A particular way to specify values of the $\ell_k$'s and $m_k$'s is as follows. Let $q$ and $r$ denote the nonnegative integers that are, respectively, the quotient and the remainder of the division of $m/2$ by $k_*$, so that $m/2=k_* q+r$ and $r<k_*$. 
Let  
\begin{equation}\label{eq:ell_k,m_k}
	\ell_k:=2\big(q+\ii\{k\le r\}\big)\quad\text{and}\quad m_k:=2(kq+k\wedge r),  
\end{equation}
where $\ii\{A\}$ denotes the indicator of an assertion $A$.  
Then indeed the $m_k$'s are even numbers, $m_{k_*}=m$ and condition \eqref{eq:ell} holds. Let us also assume the quite natural condition that $m$ is large enough so as 
\begin{equation}\label{eq:ell ge 2}
	m\ge4k_*\quad\text{and hence}\quad \ell_k\ge4\text{\quad and\quad }\ell_k/2-1\ge1.  
\end{equation}

By \eqref{eq:G_m,alt2} with $\tau_j:=\tau_{m,j}$, 
\begin{gather}
	G_{m,F}(c)=\sum_{k=1}^{k_*}\Si_k,\quad\text{where}\quad
	\Si_k:=\sum_{j=m_{k-1}+1}^{m_k}\tau_j H_j=\Si_k^\ev+\Si_k^\od, \label{eq:G(c)=} \\ 
	H_j:=F(c-j/2)+F(c+j/2-1)\ii\{j\ge2\}, \label{eq:H_j=} \\ 
	\Si_k^\ev:=\sum_{i=0}^{\ell_k/2-1}\tau_{j_{k,i}}H_{j_{k,i}}, \quad
	\Si_k^\od:=\sum_{i=0}^{\ell_k/2-1}\tau_{j_{k,i}-1}H_{j_{k,i}-1}, \notag \\ 
	j_{k,i}:=m_k-2i; \label{eq:j} 
\end{gather}
here, the superscripts ${}^\ev$ and ${}^\od$ allude to ``even'' and ``odd'', respectively. 

The key in computing $\Si_k^\ev$ and $\Si_k^\od$ is the simple identities  
\begin{equation}
	\Si_k^\ev=\tau_{m_k+2}\,\eta_{k,\ell_k/2-1}^\ev +\si_{k,\ell_k/2-1}^\ev\quad\text{and}\quad
	\Si_k^\od=\tau_{m_k+1}\,\eta_{k,\ell_k/2-1}^\od +\si_{k,\ell_k/2-1}^\od, \label{eq:key}
\end{equation}
where 
\begin{gather}
\eta_{k,s}^\ev:=\sum_{i=0}^s H_{j_{k,i}},\quad \si_{k,s}^\ev:=\sum_{i=0}^s\th_{k,i}^\ev H_{j_{k,i}}, \label{eq:eta,si ev}
\\ 
\eta_{k,s}^\od:=\sum_{i=0}^s H_{j_{k,i}-1},\quad \si_{k,s}^\od:=\sum_{i=0}^s\th_{k,i}^\od H_{j_{k,i}-1}, \label{eq:eta,si od} \\ 
	\th_{k,i}^\ev:=\tau_{j_{k,i}}-\tau_{m_k+2}=\ga_{j_{k,i}}+\ga_{j_{k,i}+2}+\dots+\ga_{m_k}, \label{eq:thev:=} \\
	\th_{k,i}^\od:=\tau_{j_{k,i}-1}-\tau_{m_k+1}=\ga_{j_{k,i}-1}+\ga_{j_{k,i}+1}+\dots+\ga_{m_k-1},   \label{eq:thod:=}  
\end{gather}
and $\ga_j:=\ga_{m,j}$; 
the last equalities in the last two lines of the above display follow 
by \eqref{eq:tau_j}, which in particular implies $\tau_{m_k+2}=0=\tau_{m_k+1}$ for $k=k_*$.  
So, 
\begin{gather}
	\th_{k,0}^\ev=\tga_{k,0}^\ev;\quad \th_{k,i}^\ev=\th_{k,i-1}^\ev+\tga_{k,i}^\ev\quad \forall i=1,\dots,\ell_k/2-1, \label{eq:thev} \\ 
	\th_{k,0}^\od=\tga_{k,0}^\od;\quad \th_{k,i}^\od=\th_{k,i-1}^\od+\tga_{k,i}^\od\quad \forall i=1,\dots,\ell_k/2-1, \label{eq:thod}	
\end{gather}
where  
\begin{gather}\label{eq:gaev,gaod}
	\tga_{k,i}^\ev:=\ga_{j_{k,i}}=\frac{\trho_{k,i}^\ev}{j_{k,i}},\quad  
	\tga_{k,i}^\od:=\ga_{j_{k,i}-1}=\frac{\trho_{k,i}^\od}{j_{k,i}-1}, \\ 
	\trho_{k,i}^\ev:=\rho_{j_{k,i}}\quad \trho_{k,i}^\od:=\rho_{j_{k,i}-1},  \notag 
\end{gather}
with $\rho_\cdot$ defined by \eqref{eq:rho:=}. 
Recalling that $m_k$ is even and using also \eqref{eq:ga_j}, \eqref{eq:j}, and \eqref{eq:rho iter}, we have 
\begin{gather}
	\trho_{k,0}^\ev=-2\binom{2m}{m+m_k}\Big/ \binom{2m}{m} \quad\text{and}\quad  
	\quad \trho_{k,0}^\od=\trho_{k,0}^\ev\frac{m+m_k}{m_k-m-1}, 
	\label{eq:rho0}\\ 
	\trho_{k,i}^\ev=\trho_{k,i-1}^\od\frac{m+j_{k,i}+1}{j_{k,i}-m}
	\quad\text{and}\quad 
	\trho_{k,i}^\od=\trho_{k,i}^\ev\frac{m+j_{k,i}}{j_{k,i}-m-1}
\quad\forall i=1,\dots,\ell_k/2-1. \label{eq:rho} 
\end{gather}

For each $k=1,\dots,k_*$, one can compute $\th_{k,\ell_k/2-1}^\ev, \eta_{k,\ell_k/2-1}^\ev,  \si_{k,\ell_k/2-1}^\ev, \th_{k,\ell_k/2-1}^\od, 
\eta_{k,\ell_k/2-1}^\od,\si_{k,\ell_k/2-1}^\od$ on core $k$. This is done in $\ell_k/2$ steps, indexed by $i=0,\dots,\ell_k/2-1$. At the initial step $i=0$, one can compute $\trho_{k,0}^\ev$ and  $\trho_{k,0}^\od$ by formula \eqref{eq:rho0}, then $\tga_{k,0}^\ev=\trho_{k,0}^\ev/m_k$ and  $\tga_{k,0}^\od=\trho_{k,0}^\od/(m_k-1)$ by \eqref{eq:gaev,gaod} and \eqref{eq:j}, then $\th_{k,0}^\ev$ and  $\th_{k,0}^\od$ by \eqref{eq:thev} and \eqref{eq:thod}, and finally 
\begin{equation}\label{eq:step 0}
\eta_{k,0}^\ev=H_{m_k},\quad \eta_{k,0}^\od=H_{m_k-1},\quad 
\si_{k,0}^\ev=\tga_{k,0}^\ev H_{m_k},\quad \si_{k,0}^\od=\tga_{k,0}^\od H_{m_k-1}	
\end{equation}
in accordance with \eqref{eq:eta,si ev}, \eqref{eq:eta,si od}, \eqref{eq:thev}, \eqref{eq:thod}, and \eqref{eq:j}. 
After that, at each step \break 
$i=1,\dots,\ell_k/2-1$, one computes $\trho_{k,i}^\ev$ and  $\trho_{k,i}^\od$ (in this order) by formula \eqref{eq:rho}, then $\tga_{k,i}^\ev=\trho_{k,i}^\ev/j_{k,i}$ and	$\tga_{k,i}^\od=\trho_{k,i}^\od/(j_{k,i}-1)$ by \eqref{eq:gaev,gaod}, then $\th_{k,i}^\ev$ and  $\th_{k,i}^\od$ by the recursions in \eqref{eq:thev} and \eqref{eq:thod}, and finally, in accordance with \eqref{eq:eta,si ev} and \eqref{eq:eta,si od}, with $j:=j_{k,i}$,  
\begin{equation}\label{eq:step i}
\begin{gathered}
\eta_{k,i}^\ev=\eta_{k,i-1}^\ev+H_{j},\quad 
\si_{k,i}^\ev=\si_{k,i-1}^\ev+\th_{k,i}^\ev H_{j}, \\  
\eta_{k,i}^\od=\eta_{k,i-1}^\od+H_{j-1},\quad 
\si_{k,i}^\od=\si_{k,i-1}^\od+\th_{k,0}^\od H_{j-1}. 	
\end{gathered}
\end{equation} 

Then, for each $k=1,\dots,k_*$, the six computed values of 
\begin{equation}\label{eq:output}
\th_{k,\ell_k/2-1}^\ev,\quad \th_{k,\ell_k/2-1}^\od,\quad \eta_{k,\ell_k/2-1}^\ev,\quad \eta_{k,\ell_k/2-1}^\od,\quad \si_{k,\ell_k/2-1}^\ev,\quad  
\si_{k,\ell_k/2-1}^\od	
\end{equation}
are transmitted from kernel $k$ (running on core $k$) to the master kernel. 
By \eqref{eq:thev:=}, \eqref{eq:thod:=}, \eqref{eq:j}, and \eqref{eq:ell}, 
$\th_{k,\ell_k/2-1}^\ev=\tau_{m_{k-1}+2}-\tau_{m_k+2}$ and 
$\th_{k,\ell_k/2-1}^\od=\tau_{m_{k-1}+1}-\tau_{m_k+1}$. 
So, the values of  
\begin{equation}\label{eq:taus}
\tau_{m_k+2}=\th_{k+1,\ell_{k+1}/2-1}^\ev+\dots+\th_{k_*,\ell_{k_*}/2-1}^\ev
\quad\text{and}\quad
	\tau_{m_k+1}=\th_{k+1,\ell_{k+1}/2-1}^\od+\dots+\th_{k_*,\ell_{k_*}/2-1}^\od
\end{equation} 
for all $k=1,\dots,k_*$ 
can be very quickly computed in the master kernel. 
Now the calculation of $G_{m,F}(c)$ can be very quickly completed by \eqref{eq:G(c)=} and 
\eqref{eq:key}. 

We see that the bulk of this calculation is to compute -- for each $k=1,\dots,k_*$, on core $k$ -- the values \eqref{eq:output}, which takes $\ell_k/2\approx m/(2k_*)$ steps, in view of \eqref{eq:ell}. 

Each of these steps -- except for the initial step, corresponding to $i=0$ -- involves just 6 operations of multiplication/division and 8 operations of addition/subtraction 
of real numbers of a given accuracy (say $d_1$), not counting the additions/subtractions of $1$ in \eqref{eq:gaev,gaod} and \eqref{eq:rho}. 
Moreover, at each step $i=1,\dots,\ell_k/2-1$, there is no need to keep in memory the values $\th_{k,i-1}^\ev,\th_{k,i-1}^\od,\trho_{k,i-1}^\od,\eta_{k,i-1}^\ev,\eta_{k,i-1}^\od,\si_{k,i-1}^\ev,\si_{k,i-1}^\od$ computed at the previous step $i-1$. 
For instance, the recursion in \eqref{eq:thev} can be realized in programming code as $\th_{k,\cdot}^\ev\leftarrow\th_{k,\cdot}^\ev+\tga_{k,i}^\ev$, with the single value of $\th_{k,\cdot}^\ev$ updated in the memory for each $i=1,\dots,\ell_k/2-1$. 

Therefore, all the steps $i=1,\dots,\ell_k/2-1$ together require memory storage of only $\asymp k_*$ real numbers of the accuracy $d_1$ -- in addition to the memory needed to compute and store the current value of $H_\cdot$, defined in \eqref{eq:H_j=}. 


The most significant computational difference of the 
initial step $i=0$ from steps $i=1,\dots,\ell_k/2-1$ is the calculation of the binomial coefficients in \eqref{eq:rho0}. This can be done very quickly using a version of the divide-and-conquer algorithm, as it is done e.g.\ by Mathematica. However, then the needed memory storage will be much greater than $\asymp k_*$ of real numbers of the accuracy $d_1$. 

Yet, it is not hard to figure out how to compute the values of $\trho_{k,0}^\ev$ in \eqref{eq:rho0} very fast, in an amount of time negligible as compared with the total time to compute $G_{m,F}(c)$, and still requiring memory storage of only $\asymp k_*$ real numbers of the accuracy $d_1$. 
Indeed, letting $\trho_{0,0}^\ev:=-2$, it is easy to see that for each $k=1,\dots,k_*$ 
\begin{equation*}
	\trho_{k,0}^\ev=\trho_{k-1,0}^\ev\, N_k, 
\end{equation*}
where 
\begin{equation}\label{eq:N_k}
N_k:=N_{k,\ell_k}:=\prod_{j=m_{k-1}+1}^{m_k}\nu_j=\prod_{i=1}^{\ell_k}\nu_{m_{k-1}+i}\quad\text{with}\quad \nu_j:=\frac{j-m-1}{m+j},  	
\end{equation}
can be computed recursively in kernel $k$ in $\ell_k\approx m/k_*$ steps, as follows: 
\begin{equation*}
	N_{k,1}=\nu_{m_{k-1}+1};\quad
	N_{k,i}=N_{k,i-1}\nu_{m_{k-1}+i}\quad\forall i=2,\dots,\ell_k. 
\end{equation*}
Similarly to a previous comment, note here that there is no need to keep in memory the value $N_{k,i-1}$ computed at the previous step $i-1$. 

Thus -- in addition to the calculation of the values of $H_1,\dots,H_m$ in accordance with \eqref{eq:H_j=} --  
the calculation of $G_{m,F}(c)$ requires only $\asymp m/k_*$ arithmetical operations (a.o.'s) in each of the parallel $k_*$ kernels plus only $\asymp k_*$ a.o.'s in the master kernel, and the required memory storage is only for $\asymp k_*$ real numbers of the accuracy $d_1$.

\section{Examples}\label{examples}

Four examples will be considered in this section, to illustrate applications of the Alt formula to the summation of possibly divergent series and measure its performance in terms of the execution time and memory use, in comparison with the performance of the EM formula -- and also with the performance of Mathematica and the Richardson extrapolation process (REP) where the latter tools are applicable. 
Applications of the REP to summation rely on asymptotic expansions, which appear to have been designed on an ad hoc basis, depending on the function $f$, as was e.g.\ done in \cite[Appendix~E.2]{sidi}. 

In the first of these examples, considered in Subsection~\ref{sqrt}, both the function $f$ and its antiderivative $F$ are given by rather simple expressions, but high-order derivatives of $f$ are significantly more complicated. As expected, here the Alt formula greatly outperforms the EM one, both in the execution time and memory use. 

In the example considered in Subsection~\ref{erfInv}, both the function $f$ and its antiderivative $F$ are complicated, and high-order derivatives of $f$ are significantly more complicated yet. Here the advantages of the Alt formula over the EM one, especially in the execution time, are even more pronounced. In particular, Alt calculations yield $500$ (decimal) digits of the results in about $0.8$ sec, whereas it takes the EM formula over half an hour to produce just $150$ digits.  

In the example considered in Subsection~\ref{zeta} -- where an array of values of the Hurwitz generalized zeta function is computed with various degrees of accuracy -- 
the function $f$, its antiderivative $F$, and high-order derivatives of $f$ are more or less equally complicated. Here the execution time and memory use numbers of the Alt and EM formulas are within a factor of $2$ from each other for up to $d=16\times10^3$ digits of accuracy of the result of the calculations, with the EM formula being better for the calculations without parallelization, and the comparison reversed when the calculations are parallelized. In this example, calculations of the coefficients $\tau_{m,r}$ and $B_{2j}$ in the Alt and EM formulas take relatively small fractions of the execution time and memory use. However, for very large values of $d$ it is expected that here too the Alt formula will significantly outperform the EM one even without parallelization, because of the comparatively fast growth in $d$ of the execution time and memory use needed to compute the Bernoulli numbers $B_{2j}$. 
As for the comparison with Mathematica, both the EM formula and, especially,
the Alt one in the parallelized version perform much faster than the (non-parallelizable) built-in Mathematica command \texttt{HurwitzZeta[]};  
however, \texttt{HurwitzZeta[]} is better in terms of memory use than our implementations of the EM and Alt  formulas. 
The Richardson extrapolation scheme (REP) is also applicable here, but its performance in this situation is much inferior even to that of Mathematica's \texttt{HurwitzZeta[]}, let alone the EM and Alt formulas.   

Finally, in the example considered in Subsection~\ref{euler}, the function $f$ and its high-order derivatives of $f$ are simple, but the antiderivative $F$ of $f$ is significantly more complicated. Here the comparison between the EM and Alt formulas is somewhat similar to that in the previous example; see Subsection~\ref{euler} for details. Again, for very, very large values of the number $d$ of the required digits of accuracy of the result it is expected that in this example too the Alt formula will significantly outperform the EM one even without parallelization. In this situation as well, the REP's performance is much inferior to that of the EM and Alt formulas. 

\bigskip

Recall now again that, to use the EM formula, one will usually need to have an antiderivative $F$ of $f$ in tractable/computable form. Then $f$ (being the derivative of $F$) will usually be of complexity no less than that of $F$. So, the latter two of the four mentioned examples are among the relatively few settings least favorable to the Alt formula, in comparison with the EM one. 
It may therefore be noted that such unfavorable examples are rather disproportionally represented among the four examples considered in Section~\ref{examples}. 
Yet, even then, the Alt formula can be expected to significantly outperform the EM one when very high accuracy is needed. In other cases, Alt will usually be significantly better than EM even for moderately high accuracy.     

The annotated code and details of the calculations in the mentioned examples are given in the Mathematica notebooks and their pdf images in 
the zip file AltSum.zip at the Selected Works site \url{https://works.bepress.com/iosif-pinelis/}. 
Our calculations were verified by comparing with each other the computed decimal approximations to the values of the corresponding generalized sums, $\sum_{k\ge0}^\Alt f(k)$ and $\sum_{k\ge0}^\EM f(k)$, and also, in the examples in Subsections~\ref{zeta} and \ref{euler}, by comparing those decimal approximations with the ones computed using the built-in Mathematica commands \texttt{HurwitzZeta[]} and \texttt{EulerGamma}. 

\subsection{Example: simple \texorpdfstring{$f$}{f} and \texorpdfstring{$F$}{f}, complicated derivatives \texorpdfstring{$f^{(2j)}$}{}}\label{sqrt}

Here we consider summing the divergent series $\sum_{k=0}^\infty f(k)$ according to Corollaries~\ref{cor:series} and \ref{cor:seriesEM}, where 
\begin{equation}\label{eq:sqrt f}
	f(x):=\frac{3x^3}{\sqrt{x^2 + 1}}.  
\end{equation}
Then for an antiderivative $F$ of $f$ one can take the function given by the formula 
\begin{equation}\label{eq:sqrt F}
	F(x)=(x^2-2)\sqrt{x^2 + 1}. 
\end{equation}

The remainder $R_{m,f_c}(\infty)$ in \eqref{eq:series,c} will be bounded according to \eqref{eq:R_c,infty}. Let 
\begin{equation}\label{eq:a,la,mu--ex2}
	a=-2\quad\text{and}\quad\la=2. 
\end{equation}
Then for all $z\in\Pi_{-a}^+$ (recall here the definition in \eqref{eq:S=Pi}) we have $|z|\ge\Re z\ge-a=2$ and $|z-1|\ge|z|-1\ge2-1=1$, whence 
$$|z^2+1|\ge|z|^2-1=|z|^2(1-1/|z|^2)\ge|z|^2(1-1/2^2)=\tfrac34\,|z|^2,$$ 
$|z|\le|z-1|+1\le2|z-1|$, 
and 
$$|f(z)|\le\frac{3|z|^3}{\sqrt{\frac34}\,|z|}=2\sqrt{3}|z|^2
\le8\sqrt{3}|z-1|^2
=8\sqrt{3}|z+a+1|^2,$$ so that condition \eqref{eq:|f|<} holds with $\mu=8\sqrt{3}=13.85\dots$. 
Using Remark~\ref{rem:mu}, this possible value for $\mu$ can be a bit improved, to the optimal value 
\begin{equation}\label{eq:mu=}
	\mu=\frac{24}{\sqrt5}=10.73\dots;  
\end{equation}
for details of this calculation, see Mathematica notebook 
\verb!\sqrt\mu.nb! and its pdf image \verb!\sqrt\mu.pdf!;  
\begin{center}
\framebox{
	\parbox{4.in}{all the files and folders mentioned here and in what follows are contained in 
the mentioned earlier zip file AltSum.zip at the Selected Works site \url{https://works.bepress.com/iosif-pinelis/}.
}
}
\end{center}
The value $\mu$ in \eqref{eq:mu=} will be assumed in the rest of the consideration of this example. One may note that with this value of $\mu$ the inequality in \eqref{eq:|f|<} turns into the equality for $z=-a=2$. 


So, \eqref{eq:R_c,infty} will hold if 
\begin{equation}\label{eq:m,c>}
\text{$m\ge2$\quad and\quad $c\ge2+(m+3)/2=(m+7)/2$.}	
\end{equation}

By Remark~\ref{rem:m_0}, Corollaries~\ref{cor:series} and \ref{cor:seriesEM} will hold with 
\begin{equation}\label{eq:m>m0=2}
	m>m_0=2, 
\end{equation}
which will also be assumed in this example. 
Then, by \eqref{eq:G} and \eqref{eq:G^EM_m}, one has the following, rather complicated expressions for $G_{m_0,F}(n)$ and $G^\EM_{m_0,F}(n)$: 
\begin{equation*}
	G_{m_0,F}(n)=
	\frac{(8 n^2-8n-14) \sqrt{ n^2- n+5/4}-(n^2-2 n-1)\sqrt{n^2-2 n+2}-(n^2-2) \sqrt{n^2+1}}6
\end{equation*}
and 
\begin{equation*}
	G^\EM_{m_0,F}(n)=\frac{4 n^6-18 n^5+32 n^4-22 n^3-15 n^2+34 n-23}{4 (n^2-2 n+2)^{3/2}}. 
\end{equation*} 
Writing the terms of the form $(n^2+Bn+C)^p$ in these expressions for $G_{m_0,F}(n)$ and $n^i G^\EM_{m_0,F}(n)$ as \break 
$n^{2p}(1+B\vp+C\vp^2)^p$ with $\vp:=\frac1n$, and then expanding $(1+B\vp+C\vp^2)^p$ into the powers of $\vp$,
%
we have 
\begin{equation*}
	G_{m_0,F}(n)=n^3-\frac{3 n^2}{2}-n+\frac{3}{4}-\frac{9}{8 n}-\frac{9}{16 n^2}+\frac{1}{8 n^3}+\frac{15}{32
   n^4}+\frac{31}{128 n^5}+O\Big(\frac{1}{n^6}\Big) 
\end{equation*}
and 
\begin{equation*}
G^\EM_{m_0,F}(n)=	n^3-\frac{3 n^2}{2}-n+\frac{3}{4}-\frac{9}{8 n}-\frac{9}{16 n^2}+\frac{1}{8 n^3}+\frac{15}{32
   n^4}+\frac{19}{128 n^5}+O\Big(\frac{1}{n^6}\Big). 
\end{equation*}
So, for this example, we confirm the asymptotic relation \eqref{eq:same} and see that here one can replace $G_{m,F}(n)$ and $G^\EM_{m,F}(n)$ in \eqref{eq:series,c} and \eqref{eq:series,c,EM} by 
$$n^3-\frac{3 n^2}{2}-n+\frac{3}{4}.$$ 
We can also see that here in fact $G_{m_0,F}(n)-G^\EM_{m_0,F}(n)=O\big(\frac{1}{n^5}\big)$.

In this example, concerning the quantities $T_f$, $T_F$, and $T_\tau$ introduced in Subsection~\ref{c,m,alt}, we have $T_f\approx T_F>>T_\tau$, so that, by \eqref{eq:om}, \eqref{eq:om<.1},  \eqref{eq:m_om}, and \eqref{eq:c approx}, 
\begin{equation*}
\om\approx\frac\ka{2.5}\approx0.1,\quad m=m_\om\approx0.53d,\quad\text{and}\quad 
c=c_{d,m_\om}\approx m_\om\Big(\frac12+\frac\ka{\om L\big(\tfrac{1}{e \om}\big)}\Big)\approx1.55 d. 
\end{equation*}

Accordingly, we will take here 
\begin{equation}\label{eq:m=}
	m=2\lceil \tfrac12\,0.53d\rceil,  
\end{equation}
for $m$ to be an even integer, as was assumed in Section~\ref{par}. 
However, $c$ will be taken to be, more accurately, the least integer majorizing the solution for $c$ of the equation $R^*_{m,c}=\frac12\,10^{-d}$, where $R^*_{m,c}$ is as in \eqref{eq:R_c,infty} and $d$ is the desired number of known digits of the generalized sum $\sum_{k\ge0}^\Alt f(k)$ in \eqref{eq:series,c} after the decimal point: 
\begin{equation}\label{eq:cc=}
	c=\Big\lceil \frac{m+1}2-a
	+\Big(2\times10^d\,\frac{1.001\pi\mu3^\la \ka^{2m}m^{2m+1}}{(2m+1)(2m-1-\la)}\Big)^{1/(2m-1-\la)} 
	\Big\rceil, 
\end{equation}
with $\ka$ as defined in \eqref{eq:ka}. 
Then conditions \eqref{eq:m,c>} and \eqref{eq:m>m0=2} will hold if $d\ge6$ (say), which will of course be the case. 

Table~\ref{tab:sqrt} presents the wall-clock time (in sec) and the required memory use (in bytes) to compute $d$ digits of the generalized sum $\sum_{k\ge0}^\Alt f(k)$ in \eqref{eq:series,c}, based on the Alt summation formula \eqref{eq:}, for each of the selected values of $d$, ranging from $2\times10^3$ to $32\times10^4$. 
It appears sensible enough to present the memory use per digit of accuracy. For instance, the recorded memory use $20\,d$ in Table~\ref{tab:sqrt} for $d=2\times10^3$ and a one-core calculation means that only about $20$ bytes of memory were used per one decimal digit of the $2\times10^3$ correct computed digits of $\sum_{k\ge0}^\Alt f(k)$. One may recall here that one byte can represent about $\log_{10}(2^8)\approx2.41$ decimal digits.  
The calculations were done  
on one core and also on $12$ parallel cores of a 12-core 2.30GHz Intel Xeon (64-bit) machine with 128 GB RAM, running Windows 7. The values of $m$ and $c$ in \eqref{eq:series,c} were chosen, for each selected value of $d$, in accordance with the above discussion. 
Deployment of multiple parallel cores may take a few seconds; so, it is justified only if the execution time is at least a couple of seconds. Therefore, the very small amount of time of the calculation for $d=2\times10^3$ on parallel cores recorded in Table~\ref{tab:sqrt} shows that, for $f$ as in this example, it may make sense to use parallelization only for $d>2\times10^3$. 
One can see that the ratios of the one-core times to the corresponding 12-core times in Table~\ref{tab:sqrt} tend to get close to $12$, the number of parallel cores, as $d$ increases from $2\times10^3$ to $16\times10^4$. 
The calculation on one core for $d=32\times10^4$ has not been done, since it is expected to take a very long time, about $1600\ \text{sec}\times12\approx5$ hrs. 
The annotated code and details of these calculations are given in Mathematica notebooks \verb!\sqrt\ASqrt.nb! and \verb!\sqrt\AParSqrt.nb! (for the one-core and $12$-core Alt calculations, respectively) and their pdf images \verb!\sqrt\ASqrt.pdf! and \verb!\sqrt\AParSqrt.pdf!.   

\setlength\tabcolsep{3.pt}
\renewcommand{\arraystretch}{1.5}
\begin{table}[h]
	\centering
  \begin{tabular}{c|m{.8cm}||c|c|c|c|c|c|c}
\raisebox{-6pt}[-5pt][-5pt]{\large Alt} & \multicolumn{1}{c||}{\raisebox{-6pt}[-5pt][-5pt]{\# cores}} &\multicolumn{7}{c}{$d$} \\
\cline{3-9}
    \ce $ $ & & $2\times 10^3$ & $10^4$ & $2\times 10^4$ & $4\times 10^4$ & $8\times 10^4$ & $16\times 10^4$ & $32\times 10^4$  \\ 
    \hline\hline
     \ce time & \ce 1 & $0.24$ & $5.5$ & $28$ & $150$ & $730$ & $3600$ & $-$  
    \\ 
   \hline
     \ce memory & \ce 1  & $20\,d$  & $12\,d$ & $11\,d$ & $11\,d$ & $11\,d$ & $11\,d$ & $-$  
        \\  
   \hline\hline          
     \ce time & \ce 12  & $0.10$  & $0.72$ & $3.1$ & $15$ & $68$ & $330$ & $1600$  
    \\ 
    \hline    
   \ce memory & \ce 12  & $360\,d$  & $240\,d$ & $220\,d$ & $210\,d$ & $220\,d$ & $210\,d$ & $210\,d$ 
    \\ 
  \end{tabular}
  	\caption{The wall-clock time (in sec) and memory use (in bytes) to compute $d$ digits of the generalized sum $\sum_{k\ge0}^\Alt f(k)$ in \eqref{eq:series,c}, on one core and on $12$ parallel cores, 
  	for 
  	$f$ as in \eqref{eq:sqrt f} and 
  	each of the listed values of $d$ -- by using \eqref{eq:series,c}.}
	\label{tab:sqrt}
\end{table}

\setlength\tabcolsep{3.pt}
\renewcommand{\arraystretch}{1.5}
\begin{table}[h]
	\centering
  \begin{tabular}{c|m{.8cm}||c|c|c|c}
\raisebox{-6pt}[-5pt][-5pt]{\large EM}  & \multicolumn{1}{c||}{\raisebox{-6pt}[-5pt][-5pt]{\# cores}} &\multicolumn{4}{c}{$d$} \\
\cline{3-6}
    \ce $ $ & & $\frac12\times10^3$ & $10^3$ & $2\times10^3$ & $4\times 10^3$ \\ 
    \hline\hline
     \ce time & \ce 1 & $0.69$ & $3.0$ & $81$ & $1600$  
    \\ 
   \hline
     \ce memory & \ce 1  & $13\times10^3\,d$  & $21\times10^3\,d$ & $29\times10^3\,d$ & $24\times10^3\,d$  
        \\  
   \hline\hline          
     \ce time & \ce 12  & $0.47$  & $1.7$ & $38$ & $850$ 
    \\ 
    \hline    
   \ce memory & \ce 12  & $8.9\times10^3\,d$  & $15\times10^3\,d$ & $27\times10^3\,d$ & $24\times10^3\,d$ 
    \\ 
  \end{tabular}
  	\caption{The wall-clock time (in sec) and memory use (in bytes) to compute $d$ digits of the generalized sum $\sum_{k\ge0}^\EM f(k)$ in \eqref{eq:series,c,EM} on one core and on $12$ parallel cores, for $f$ as in \eqref{eq:sqrt f} and 
  	each of the listed values of $d$ -- by using \eqref{eq:series,c,EM}.}
	\label{tab:sqrt-EM}
\end{table}

Table~\ref{tab:sqrt-EM} presents the wall-clock times (in sec) and the required memory use (in bytes) to compute $d$ digits of the generalized sum $\sum_{k\ge0}^\EM f(k)$ in \eqref{eq:series,c,EM}, based on the EM summation formula \eqref{eq:EM}, for each of the selected values of $d$, ranging from $\frac12\times10^3$ to $4\times10^3$.   
The calculations were done on the same 12-core machine.  
The values of $m$ and $c$ in \eqref{eq:series,c,EM} were chosen, for each selected value of $d$, in accordance with the discussion in Subsection~\ref{c,m,EM}. 
However, the value of the exponent $2+\vp$ in \eqref{eq:T^EM} is not so easy to measure, and it may depend on $d$. Anyhow, reasonably strong efforts were exerted to bracket the optimal value of $2+\vp$ and then to
choose values of $m$ and $c$ accordingly. 
The annotated code and details of the calculations pertained to Table~\ref{tab:sqrt-EM} are given in Mathematica notebooks \verb!\sqrt\AEMSqrt.nb! and \verb!\sqrt\AEMParSqrt.nb! (for the one-core and $12$-core Alt calculations, respectively) and their pdf images \verb!\sqrt\AEMSqrt.pdf! and \verb!\sqrt\AEMParSqrt.pdf!.  
The ratios of the one-core times to the corresponding 12-core times in Table~\ref{tab:sqrt-EM} are at best about $2$, which reflects the fact that only the calculation of the term 
$\sum_{k=0}^{c} f(k)$ in the approximation $\sum_{k=0}^{c} f(k)-G^\EM_{m,F}(c+1)$ to the generalized sum $\sum_{k\ge0}^\EM f(k)$ in \eqref{eq:series,c,EM} can be easily parallelized. 

One can see that, for $f$ as in \eqref{eq:sqrt f}, the calculations based on the Alt summation formula \eqref{eq:} are much faster than those based on the EM formula. For instance, the one-core times for $d=2\times10^3$ in Tables~\ref{tab:sqrt} and \ref{tab:sqrt-EM} are about $0.24$ sec and $81$ sec, respectively, with the ratio $81/0.24\approx340$. Such ratios seem to grow fast with $d$. For instance, 
the Alt summation formula \eqref{eq:} allows one to compute $2.5$ times as many digits ($10^4$ vs.\ $4\times10^3$) in a $0.72/850\approx1/1200$ fraction of the time
needed by the EM formula.  

As for the memory use, one can see from Table~\ref{tab:sqrt} that the corresponding numbers for the calculations in accordance with the Alt formula \eqref{eq:} are remarkably small, especially for the one-core calculations, and they go down (per digit of the result) and then stabilize as the number $d$ of computed digits grows. The memory use for the $12$-core calculations is, as expected, more than $12$ times the memory use for the corresponding one-core calculation; here one may also note that the parallelized code is more complicated than its non-parallelized counterpart. 

It is also seen that the calculations here by the EM formula take much more memory than those by the Alt summation formula \eqref{eq:}: for $d=2\times10^3$, it is about $29\times10^3/20\approx1500$ times as much for one core and $27\times10^3/360=75$ times as much for $12$ cores. 

Overall, the Alt summation formula here works on a rather different, higher scale than the EM one.    

The labels {\large Alt} and {\large EM} in the upper left cells in 
Tables~\ref{tab:sqrt} and \ref{tab:sqrt-EM}, as well as in the similar tables in the examples to be given in Subsections~\ref{erfInv}--\ref{euler}, 
indicate the use of the Alt summation formula \eqref{eq:} and the EM formula \eqref{eq:EM}, respectively. 

\subsection{Example: complicated \texorpdfstring{$f$}{f} and \texorpdfstring{$F$}{F}, very complicated derivatives \texorpdfstring{$f^{(2j)}$}{}}\label{erfInv}

Here we 
consider summing the convergent series $\sum_{k=0}^\infty f(k)$ according to Corollaries~\ref{cor:series} and \ref{cor:seriesEM}, where 
\begin{equation}\label{eq:f,erfInv}
	f(x):=\frac x{(x^2+2)\sqrt{x^2+1}}\,\erf^{-1}\Big(\arctan\frac1{\sqrt{x^2+1}}\Big)   
\end{equation}
for real $x\ge0$ and $\erf^{-1}$ is the function inverse to the error function $\erf$, given by the formula $\erf w:=\frac2{\sqrt\pi}\int_0^w e^{-v^2}\dd v$ for $w\in\CC$. 
Then for an antiderivative $F$ of $f$ one can take the function given by the formula 
\begin{equation}\label{eq:F,erfInv}
	F(x)=\frac1{\sqrt\pi}\,\exp\Big\{\erf^{-1}\Big(\arctan\frac1{\sqrt{x^2+1}}\Big)^2\Big\}. 
\end{equation}
One can see that Corollaries~\ref{cor:series} and \ref{cor:seriesEM} may be applicable, at least in principle, even when the function $f$ is rather far from elementary. 

Let 
\begin{equation*}
a:=-3,\quad \vp_1:=\tfrac13,\quad \vp_2:=\tfrac{38}{100},\quad
\de:=\tfrac{48}{100},\quad \ka_1:=\tfrac2{\sqrt\pi}\,(1-\de^2 e^{\de^2})\approx0.80.  	
\end{equation*}
For any real $r>0$, let $D_r:=\{z\in\CC\colon|z|\le r\}$. For all $v\in D_\de$ we have 
$|1-e^{-v^2}|= 
\big|\int_0^{-v^2}e^w\dd w\big|\le|v|^2 e^{|v|^2}
\le\de^2 e^{\de^2}$. So, for any $w_1$ and $w_2$ in $D_\de$ 
\begin{multline*}
	\tfrac{\sqrt\pi}2\,|\erf w_1-\erf w_2|=\Big|\int_{w_1}^{w_2} e^{-v^2}\dd v\Big| 
	\ge|w_1-w_2|-\Big|\int_{w_1}^{w_2} (1-e^{-v^2})\dd v\Big| \\ 
	\ge|w_1-w_2|(1-\de^2 e^{\de^2})=\tfrac{\sqrt\pi}2\,\ka_1|w_1-w_2|,  
\end{multline*}
so that the function $\erf$ is one-to-one on $D_\de$. 
It also follows that 
$|\erf w|\ge\ka_1\,|w|$\quad for all $w\in D_\de$, 
so that all the points in the image of the boundary of the disk $D_\de$ under the map $\erf$ are at distance $\ge\ka_1\de>\vp_2$ from $0$. 
It now follows by Darboux--Picard theorem (see e.g.\ Subsection~2.1 in \cite{dichot_published} or the more general Jordan Filling principle there) that $\erf(D_\de)\supseteq D_{\vp_2}$. 
Hence, the inverse function $\erf^{-1}$ can be extended to $D_{\vp_2}$, and 
\begin{equation}\label{eq:subset}
	\erf^{-1}(D_{\vp_2})\subseteq D_\de. 
\end{equation}
Next, for any $v\in\CC$ we have 
\begin{equation}\label{eq:<vp_2}
	|v|\le\vp_1\implies|\arctan v|=\Big|\int_0^v\frac{\dd u}{1+u^2}\Big|\le\frac{|v|}{1-|v|^2}
	\le\frac{\vp_1}{1-\vp_1^2}<\vp_2. 
\end{equation}

Further, 
recall here \eqref{eq:S=Pi} and take any $z\in\Pi_{-a}^+=\Pi_3^+$, so that $\Re z\ge3$. 
Let $s:=(\Re z)^2[\ge a^2=9]$ and $t:=(\Im z)^2[\ge0]$. 
Then $|z^2+1|^2=(1+s-t)^2+4st=:h(t)$. Clearly, $h(t)$ is convex in $t$ and $h'(0)=2(s-1)\ge16>0$ for $t\ge0$. So, $|z^2+1|^2\ge h(0)=(1+s)^2\ge(1+a^2)^2$. Thus, 
\begin{equation}\label{eq:<vp_1}
z\in\Pi_{-a}^+\implies	\big|\tfrac1{\sqrt{z^2+1}}\big|\le\tfrac1{\sqrt{a^2+1}}<\tfrac13=\vp_1. 
\end{equation}
Similarly, $\big|\frac z{z^2+2}\big|^2$ is a simple rational function of $s$ and $t$, which is easy to maximize over $s\ge a^2$ and $t\ge0$. Actually, in view of the maximum modulus principle, here one may assume that $s=a^2[=9]$. It is then easy to see that the derivative of the mentioned rational function in $t$ is $\le0$. So, the maximum of $\big|\frac z{z^2+2}\big|$ in $z\in\Pi_{-a}^+$ is attained at $z=-a=3$. Hence, 
\begin{equation*}
z\in\Pi_{-a}^+\implies\big|\tfrac z{z^2+2}\big|\le\tfrac{|a|}{a^2+2}=\tfrac3{11}. 
\end{equation*}

Combining this with \eqref{eq:f,erfInv}, \eqref{eq:<vp_1}, \eqref{eq:<vp_2}, and \eqref{eq:subset}, we have 
$
	|f(z)|\le\tfrac3{11}\,\tfrac13\,\de=\tfrac{48}{1100} 
$ 
for all $z\in\Pi_{-a}^+$. 
That is, 
condition \eqref{eq:|f|<} will hold with 
\begin{equation}\label{eq:mu=,erf}
	a=-3,\quad\la=0,\quad\text{and}\quad\mu=\tfrac{48}{1100},  
\end{equation}
which will be assumed in the rest of the consideration of this example. 
The remainder $R_{m,f_c}(\infty)$ in \eqref{eq:series,c} can now be bounded according to \eqref{eq:R_c,infty}, which will hold if 
\begin{equation}\label{eq:m,c>,erf}
\text{$m\ge2$\quad and\quad $c\ge3+(m+3)/2=(m+9)/2$.}	
\end{equation}

By Remark~\ref{rem:m_0}, Corollaries~\ref{cor:series} and \ref{cor:seriesEM} will hold with 
\begin{equation}\label{eq:m>m0=1}
	m>m_0=1, 
\end{equation}
which will also be assumed in this example. 
Then, by \eqref{eq:G} and \eqref{eq:G^EM_m}, one has 
\begin{equation*}
	G_{m_0,F}(n)=
	F(n-1)=
	\frac1{\sqrt\pi}\,\exp\Big\{\erf^{-1}\Big(\arctan\frac1{\sqrt{(n-1)^2+1}}\Big)^2\Big\}
	=\frac1{\sqrt\pi}+o(1)
\end{equation*}
and 
\begin{equation*}
	G^\EM_{m_0,F}(n)=F(n-1)+\frac12\,f(n-1)=\frac1{\sqrt\pi}+o(1). 
\end{equation*} 
So, for this example as well, we confirm the asymptotic relation \eqref{eq:same}. 

In this example, as in the example in Subsection~\ref{sqrt}, 
we have $T_f\approx T_F>>T_\tau$. \big(In fact, here the values of $T_f$ and $T_F$ are much greater than those in the example in Subsection~\ref{sqrt}, so that here the comparison $>>T_\tau$ is even more pronounced.\big) 
So, here we still take $m$ and $c$ according to \eqref{eq:m=} and \eqref{eq:cc=}. 

Tables~\ref{tab:erf} and \ref{tab:erfInv-EM} are similar to Tables~\ref{tab:sqrt} and \ref{tab:sqrt-EM}, respectively. The main difference is that here the values of $d$ are significantly smaller than the corresponding values in the example in Subsection~\ref{sqrt}, because the execution times in this setting are much greater. 
The annotated code and details of these calculations are given in Mathematica notebooks \verb!\erfInvAtan\AErfInvAtan.nb!, \verb!\erfInvAtan\AParErfInvAtan.nb!, \verb!\erfInvAtan\AEMErfInvAtan.nb!, \verb!\erfInvAtan\AEMParErfInvAtan.nb!,  and their pdf images, with file name extension \verb!.pdf! in place of \verb!.nb!. 

\setlength\tabcolsep{3.pt}
\renewcommand{\arraystretch}{1.5}
\begin{table}[h]
	\centering
  \begin{tabular}{c|m{.8cm}||c|c|c|c|c}
\raisebox{-6pt}[-5pt][-5pt]{\large Alt}  & \multicolumn{1}{c||}{\raisebox{-6pt}[-5pt][-5pt]{\# cores}} &\multicolumn{5}{c}{$d$} \\
\cline{3-7}
    \ce $ $ & & $\frac12\,\times 10^3$ & $10^3$ & $2\times 10^3$ & $4\times 10^3$ & $8\times 10^3$  \\ 
    \hline\hline
     \ce time & \ce 1 & $5.4$ & $30$ & $240$ & $1800$ & $-$  
    \\ 
   \hline
     \ce memory & \ce 1  & $8.2\times10^3\,d$  & $11\times10^3\,d$ & $14\times10^3\,d$ & $16\times10^3\,d$ & $-$  
        \\  
   \hline\hline          
     \ce time & \ce 12  & $0.78$  & $4.0$ & $29$ & $210$ & $1900$  
    \\ 
    \hline    
   \ce memory & \ce 12  & $21\times10^3\,d$  & $16\times10^3\,d$ & $20\times10^3\,d$ & $30\times10^3\,d$ & $50\times10^3\,d$  
    \\ 
  \end{tabular}
  	\caption{The wall-clock time (in sec) and memory use (in bytes) to compute $d$ digits of the generalized sum $\sum_{k\ge0}^\Alt f(k)$ in \eqref{eq:series,c}, on one core and on $12$ parallel cores, 
  	for 
  	$f$ as in \eqref{eq:f,erfInv} and 
  	each of the listed values of $d$ -- by using \eqref{eq:series,c}.}
	\label{tab:erf}
\end{table}

\setlength\tabcolsep{3.pt}
\renewcommand{\arraystretch}{1.5}
\begin{table}[h]
	\centering
  \begin{tabular}{c|m{.8cm}||c|c|c|c}
\raisebox{-6pt}[-5pt][-5pt]{\large EM}  & \multicolumn{1}{c||}{\raisebox{-6pt}[-5pt][-5pt]{\# cores}} &\multicolumn{4}{c}{$d$} \\
\cline{3-6}
    \ce $ $ & & $20$ & $50$ & $100$ & $150$ \\ 
    \hline\hline
     \ce time & \ce 1 & $1.1$ & $33$ & $670$ & $-$  
    \\ 
   \hline
     \ce memory & \ce 1  & $61\times10^3\,d$  & $560\times10^3\,d$ & $1100\times10^3\,d$ & $-$  
        \\  
   \hline\hline          
     \ce time & \ce 12  & $0.62$  & $14$ & $300$ & $1900$ 
    \\ 
    \hline    
   \ce memory & \ce 12  & $190\times10^3\,d$  & $1600\times10^3\,d$ & $1600\times10^3\,d$ & $2700\times10^3\,d$ 
    \\ 
  \end{tabular}
  	\caption{The wall-clock time (in sec) and memory use (in bytes) to compute $d$ digits of the generalized sum $\sum_{k\ge0}^\EM f(k)$ in \eqref{eq:series,c,EM} on one core and on $12$ parallel cores, for $f$ as in \eqref{eq:f,erfInv} and 
  	each of the listed values of $d$ -- by using \eqref{eq:series,c,EM}.}
	\label{tab:erfInv-EM}
\end{table}

We see that here the Alt formula operates on a much higher scale than the EM one. Based on Table~\ref{tab:erfInv-EM}, a rather conservative prediction 
of the execution times on the 12 cores using the EM formula for $d=\frac12\,\times 10^3$,  $10^3$, $2\times 10^3$, $4\times 10^3$, and $8\times 10^3$ 
would be about 
$2.2$ days, $35$ days, $1.5$ years, $25$ years, and $390$ years -- versus the corresponding execution times $0.78$ sec, $4.0$ sec, $29$ sec, $210$ sec, and $1900$ sec in Table~\ref{tab:erf}. 
The predicted memory use for the same values of $d$ would be about $7$ GB, $33$ GB,  $150$ GB, $730$ GB, and $3400$ GB -- versus the corresponding memory use numbers $21\times10^3\times\frac12\,\times 10^3\text{ B}\approx0.011$ GB, $16\times10^3\times 10^3\text{ B}\approx0.016$ GB, $20\times10^3\times 2\times 10^3\text{ B}\approx0.04$ GB, $30\times10^3\times 4\times 10^3\text{ B}\approx0.12$ GB, and $50\times10^3\times 8\times 10^3\text{ B}\approx0.4$ GB in Table~\ref{tab:erf}. 
Details on these predictions can be found in 
Mathematica notebook \verb!\erfInvAtan\prediction.nb!
and its pdf image \verb!\erfInvAtan\prediction.pdf!.

\subsection{Example (Calculation of an array of values of the Hurwitz generalized zeta function): complicated \texorpdfstring{$f$}{f} and \texorpdfstring{$F$}{F}, comparatively simple derivatives \texorpdfstring{$f^{(2j)}$}{} } \label{zeta}
Both the EM formula and the Alt summation formula are applicable when 
the function $f$ takes values in any normed space, rather than just real values. In particular, $f$ may be complex-valued. For instance, let us consider in this example the vector function 
\begin{equation}\label{eq:f,vect}
	f:=(f_{-1+i,i},\,f_{i,i},\,f_{1+i,i},\,f_{2+i,i}), 
\end{equation}
with values in $\CC^4$, where 
\begin{equation}\label{eq:f_p,de}
	f_{p,\de}(x):=(x+\de)^{-p}    
\end{equation}
for $x\ge0$, with $\de\in\CC\setminus\{0,-1,-2,\dots\}$ and $p\in\CC$; 
in this context, of course 
$i$ denotes the imaginary unit. 
As usual, for any $z\in\CC\setminus\{0\}$ with $\th:=\arg z\in(-\pi,\pi]$, we let $z^p:=|z|^p e^{i\th p}$. 
So, the vector series corresponding to the vector function $f$ in \eqref{eq:f,vect} is 
\begin{equation*}
	\sum_{k=0}^\infty f(k)=\Big(\sum_{k=0}^\infty(k+i)^{1-i},\sum_{k=0}^\infty(k+i)^{-i},\sum_{k=0}^\infty(k+i)^{-1-i},\sum_{k=0}^\infty(k+i)^{-2-i}\Big). 
\end{equation*}
Each of the first three components of this vector series is a divergent series in $\CC$, whereas the fourth component is a convergent series in $\CC$ -- whose value is $\zeta(2+i,i)$, where 
$\zeta$ is the Hurwitz generalized zeta function. The expression $\zeta(p,1)$ defines the Riemann zeta function. 

Usually (see e.g.\ \cite[page~15]{andrews}),
$\zeta(p,\de)$ is defined only for real $\de>0$ -- initially, as $\sum_{k=0}^\infty f_{p,\de}(k)$ for $p\in\Pi_{1+}^+$ 
\big(where $\Pi_{a+}^+:=\{z\in\CC\colon\Re z>a\}$ for real $a$; cf.\ the definition of $\Pi_\cdot^+$ in \eqref{eq:S=Pi}\big), and then extended by analytic continuation to all $p\in\CC\setminus\{1\}$. However, $\zeta(p,\de)$ can actually be defined 
for all $\de\in\CC\setminus(-\infty,0]$ and all $p\in\CC\setminus\{1\}$. One way to do this is as follows. 

Take any $\de\in\CC\setminus(-\infty,0]$. Again, consider first the case when $p\in\Pi_{1+}^+$. Then, by Proposition~\ref{prop:seriesEM} with $m\ge m_0=1$, 
\begin{equation}\label{eq:zeta=Z}
	\zeta(p,\de)=Z_m(p,\de):=f_{p,\de}(0)-G^\EM_{m,F_{p,\de}}(1)+R^\EM_{m,f_{p,\de}}(\infty), 
\end{equation}
where 
\begin{equation}\label{eq:F_p,de}
	F_{p,\de}(x):=\tfrac1{1-p}\,(x+\de)^{1-p}.   
\end{equation}
It is easy to see that $Z_m(p,\de)$ is analytic in $p\in\Pi_{(2-2m)+}^+\setminus\{1\}$. 
Note also that 
\begin{equation}\label{eq:p,m iff}
	p\in\Pi_{(2-2m)+}^+\iff m>1-\tfrac12\,\Re p. 
\end{equation}
So, the formula $\zeta(p,\de):=Z_m(p,\de)$ for any $p\in\CC\setminus\{1\}$ and any natural $m>1-\tfrac12\,\Re p$ provides a well-defined analytic continuation $\CC\setminus\{1\}\ni p\mapsto\zeta(p,\de)$, for each $\de\in\CC\setminus(-\infty,0]$. 
In fact, this extension is analytic in $\de\in\CC\setminus(-\infty,0]$ as well. 

Moreover, again by Proposition~\ref{prop:seriesEM} -- but now with any 
\begin{equation}\label{eq:m,m_0,zeta}
	m_0>1-\tfrac12\,\Re p\quad\text{and}\quad m\ge m_0
\end{equation}
(cf.\ \eqref{eq:p,m iff}), we deduce from \eqref{eq:zeta=Z}, Corollary~\ref{cor:seriesEM},  Proposition~\ref{prop:same}, and Corollary~\ref{cor:series} 
that 
\begin{align*}
	\zeta(p,\de) 
=&	\sum_{k\ge0}^\EM f_{p,\de}(k)
=
\sum_{k=0}^c f_{p,\de}(k)-G^\EM_{m,F_{p,\de}}(c+1)+R^\EM_{m,(f_{p,\de})_c}(\infty) 
\\ 
&\quad\quad \resizebox{4pt}{6pt}{$||$} \notag \\ 
&
	\sum_{k\ge0}^\Alt f_{p,\de}(k)=\sum_{k=0}^{c-1} f_{p,\de}(k)-G_{m,F_{p,\de}}(c)+R_{m,(f_{p,\de})_c}(\infty) \notag
\end{align*}
for any $p\in\CC\setminus\{1\}$, any $\de\in\CC\setminus(-\infty,0]$, any natural $c$, and any $m$ and $m_0$ as in \eqref{eq:m,m_0,zeta}. 

So,
\begin{align}
	\big(\zeta(-1+i,i),\zeta(i,i),\zeta(1+i,i),\zeta(2+i,i)\big) 
&=
\sum_{k=0}^c f(k)-G^\EM_{m,F}(c+1)+R^\EM_{m,f_c}(\infty) 
\label{eq:zeta=EM} 
\\ 
&=
\sum_{k=0}^{c-1} f(k)-G_{m,F}(c)+R_{m,f_c}(\infty), 
\label{eq:zeta=Alt} 	
\end{align}
where $f$ is the vector function as in \eqref{eq:f,vect},  
\begin{equation*}
	F:=(F_{-1+i,i},\,F_{i,i},\,F_{1+i,i},\,F_{2+i,i})  
\end{equation*}
is a vector function with values in $\CC^4$ which is 
an antiderivative of $f$, $F_{p,\de}$ is as in \eqref{eq:F_p,de}, 
$c$ is any natural number, and $m$ and $m_0$ are as in \eqref{eq:m>m0=2}. 
Thus, formulas \eqref{eq:zeta=EM} and \eqref{eq:zeta=Alt} present as a way to compute an array of values of the Hurwitz generalized zeta function. 

Suppose that a real number $a$ is such that $a\le\Re\de-1$. 
Recall then \eqref{eq:S=Pi} and take any $z\in\Pi_{-a}^+$, so that $\Re z\ge-a$, whence 
$|z+a+1|\ge\Re z+a+1\ge1$, 
$$|z+\de|\le|z+a+1|
+|\de-a-1|\le(1+|\de-a-1|)|z+a+1|,$$ 
$|z+\de|\ge\Re(z+\de)\ge-a+\Re\de\ge1$.  
Therefore, if $\Re p<0$, then  
\begin{equation*}
	|f_{p,\de}(z)|\le |z+\de|^{-\Re p}\, e^{|\Im p|\,\pi/2}
	\le e^{\pi\,|\Im p|/2}\,(1+|\de-a-1|)^{-\Re p}|z+a+1|^{-\Re p}, 
\end{equation*}
so that 
condition \eqref{eq:|f|<} holds with $\mu=e^{\pi\,|\Im p|/2}(1+|\de-a-1|)^{-\Re p}$, $\la=-\Re p$, and $f_{p,\de}$ in place of $f$. 
If $\Re p\ge0$, then 
\begin{equation*}
	|f_{p,\de}(z)|\le e^{\pi\,|\Im p|/2}, 
\end{equation*}
so that 
condition \eqref{eq:|f|<} holds with $\mu=e^{\pi\,|\Im p|/2}$, $\la=0$, and $f_{p,\de}$ in place of $f$. 
Thus, for each coordinate of the function $f$ in \eqref{eq:f,vect}, condition \eqref{eq:|f|<} holds with 
\begin{equation}\label{eq:mu=,vect}
a=-1,\quad\la=1,\quad	\mu=2e^{\pi/2},  
\end{equation}
which will be assumed in the rest of the consideration of this example. 
The remainder $R_{m,f_c}(\infty)$ in \eqref{eq:series,c} can now be bounded according to \eqref{eq:R_c,infty}, which will hold if \eqref{eq:m,c>,erf} holds. 
By Remark~\ref{rem:m_0}, Corollaries~\ref{cor:series} and \ref{cor:seriesEM} will hold if \eqref{eq:m>m0=2} holds,   
which will also be assumed in this example. 
Writing the terms of the form $(n+\th)^{1-p}$ in the expressions in \eqref{eq:G} and \eqref{eq:G^EM_m} for $G_{m_0,F}(n)$ and $n^i G^\EM_{m_0,F}(n)$ as $n^{1-p}(1+\th\vp)^{1-p}$ with $\vp:=\frac1n$, and then expanding $(1+\th\vp)^{1-p}$ into the powers of $\vp$, 
%
we have 
\begin{equation*}
	G_{m_0,F}(n)=\tfrac{4}{3} F(n-\tfrac{1}{2})-\tfrac{1}{6}F(n-1)-\tfrac{1}{6} F(n)=\tilde G(n)+O\big(\tfrac1n\big)	
\end{equation*}
and 
\begin{equation*}
	G^\EM_{m_0,F}(n)=F(n-1)+\tfrac12\,f(n-1)+\tfrac{B_2}2\,f'(n-1)=\tilde G(n)+O\big(\tfrac1n\big),	
\end{equation*}
where 
\begin{equation*}
	\tilde G(n):=\tfrac1{n^i}\,\big(
\tfrac{12 n^2 + 30 i\, n - 23 + 9 i}{12 (2 - i)},\ 
\tfrac{2 n + 1 + 3 i}{2 (1 - i)},\ 
\tfrac1{- i},\, 
0
\big)
\end{equation*}
and $O(\frac1n)$ stands for a vector of the form $\big(O(\frac1n),\,O(\frac1n),\,O(\frac1n),\,O(\frac1n)\big)$. 
So, for this example as well, we confirm the asymptotic relation \eqref{eq:same}. 
One may also note here that the factor $\frac1{n^i}=e^{-i\ln n}$ in the above expression for $\tilde G(n)$ is of modulus $1$, and it oscillates in $n$ with frequency $\frac{\ln n}{2\pi n}\underset{n\to\infty}\longrightarrow0$. 

In this example, as in the example in Subsection~\ref{sqrt}, 
we have $T_f\approx T_F>>T_\tau$. \big(In fact, here the values of $T_f$ and $T_F$ are much greater than those in the example in Subsection~\ref{sqrt}, so that here the comparison $>>T_\tau$ is even more pronounced.\big) 
Therefore, here we still take $m$ and $c$ according to \eqref{eq:m=} and \eqref{eq:cc=}. 

Tables~\ref{tab:vect} and \ref{tab:vect-EM} are similar to Tables~\ref{tab:sqrt} and \ref{tab:sqrt-EM}, respectively. As in the example in Subsection~\ref{erfInv}, here the values of $d$ are significantly smaller than the corresponding values in the example in Subsection~\ref{sqrt}, because the execution times in this setting are much greater.  
The annotated code and details of these calculations are given in 
Mathematica notebooks \verb!\vector\AVect.nb!, \verb!\vector\AParVect.nb!, \verb!\vector\AEMVect.nb!, \verb!\vector\AEMParVect.nb!,  and their pdf images, with file name extension \verb!.pdf! in place of \verb!.nb!. 

\setlength\tabcolsep{3.pt}
\renewcommand{\arraystretch}{1.5}
\begin{table}[h]
	\centering
  \begin{tabular}{c|m{.8cm}||c|c|c|c|c}
\raisebox{-6pt}[-5pt][-5pt]{\large Alt}  & \multicolumn{1}{c||}{\raisebox{-6pt}[-5pt][-5pt]{\# cores}} &\multicolumn{5}{c}{$d$} \\
\cline{3-7}
    \ce $ $ & & $10^3$ & $2\times 10^3$ & $4\times 10^3$ & $8\times 10^3$ & $16\times 10^3$  \\ 
    \hline\hline
     \ce time & \ce 1 & $3.0$ & $16$ & $110$ & $780$ & $-$  
    \\ 
   \hline
     \ce memory & \ce 1  & $15\times10^3\,d$ & $26\times10^3\,d$ & $48\times10^3\,d$ & $93\times10^3\,d$ & $-$  
        \\  
   \hline\hline          
     \ce time & \ce 12  &  $0.70$ & $2.8$ & $16$ & $100$ & $620$  
    \\ 
    \hline    
   \ce memory & \ce 12  &  $33\times10^3\,d$ & $42\times10^3\,d$ & $64\times10^3\,d$ & $110\times10^3\,d$ & $200\times10^3\,d$  
    \\ 
  \end{tabular}
  	\caption{The wall-clock time (in sec) and memory use (in bytes) to compute $d$ digits of the generalized sum $\sum_{k\ge0}^\Alt f(k)=\big(\zeta(-1+i,i),\zeta(i,i),\zeta(1+i,i),\zeta(2+i,i)\big)$ in \eqref{eq:series,c}, on one core and on $12$ parallel cores, 
  	for 
  	$f$ as in \eqref{eq:f,vect} and 
  	each of the listed values of $d$ -- by using \eqref{eq:series,c}.}
	\label{tab:vect}
\end{table}

\setlength\tabcolsep{4pt}
\renewcommand{\arraystretch}{1.5}
\begin{table}[h]
	\centering
  \begin{tabular}{c|m{.8cm}||c|c|c|c|c}
\raisebox{-6pt}[-5pt][-5pt]{\large EM}  & \multicolumn{1}{c||}{\raisebox{-6pt}[-5pt][-5pt]{\# cores}} &\multicolumn{5}{c}{$d$} \\
\cline{3-7}
    \ce $ $ & & $10^3$ & $2\times10^3$ & $4\times10^3$ & $8\times 10^3$ & $16\times 10^3$ \\ 
    \hline\hline
     \ce time & \ce 1 & $2.7$ & $10$ & $60$ & $400$ & $-$  
    \\ 
   \hline
     \ce memory & \ce 1  & $11\times10^3\,d$  & $17\times10^3\,d$ & $27\times10^3\,d$ & $59\times10^3\,d$ & $-$  
        \\  
   \hline\hline          
     \ce time & \ce 12  & $1.2$  & $5.1$ & $27$ & $160$ & $1200$ 
    \\ 
    \hline    
   \ce memory & \ce 12  & $42\times10^3\,d$  & $54\times10^3\,d$ & $94\times10^3\,d$ & $190\times10^3\,d$ & $450\times10^3\,d$ 
    \\ 
  \end{tabular}  	
  \caption{The wall-clock time (in sec) and memory use (in bytes) to compute $d$ digits of the generalized sum $\sum_{k\ge0}^\EM f(k)=\big(\zeta(-1+i,i),\zeta(i,i),\zeta(1+i,i),\zeta(2+i,i)\big)$ in \eqref{eq:series,c,EM} on one core and on $12$ parallel cores, 
  	for $f$ as in \eqref{eq:f,vect} and 
  	each of the listed values of $d$ -- by using \eqref{eq:series,c,EM}.}
	\label{tab:vect-EM}
\end{table}

The main difference between the present example and the examples in Subsections~\ref{sqrt} and \ref{erfInv}
is that in this exceptional case the values of the higher-order derivatives of $f$ are about as easy to compute as the values of the function $f$ itself, since 
\begin{equation}\label{eq:der-vect}
f_{p,\de}^{(2j-1)}(x)=-p(p+1)\dots(p+2 j-2) (x+\de)^{-p-2 j+1} 	
\end{equation}
for $j=1,2,\dots$ and $x\ge0$. 
Accordingly, we see that the numbers in Table~\ref{tab:vect} are of the same order of magnitude as the corresponding numbers in Table~\ref{tab:vect-EM}; the latter numbers are somewhat better for the one-core calculations and somewhat worse for the 12-core ones.

The execution time and memory use measurements reported in Tables~\ref{tab:vect} and \ref{tab:vect-EM} may be compared with the corresponding measurements for the Mathematica built-in command 
\texttt{HurwitzZeta[]}, which is one of the Mathematica commands that cannot be parallelized (in Mathematica). 
The execution time and memory use for \texttt{HurwitzZeta[]} are reported in Table~\ref{tab:vect-Ma}; for details, see again Mathematica notebook \verb!\vector\AEMVect.nb! and its pdf image \verb!\vector\AEMVect.pdf!.  
The label {\large Ma} in the upper left cell in 
Table~\ref{tab:vect-Ma} indicates the use of Mathematica. 

\setlength\tabcolsep{4pt}
\renewcommand{\arraystretch}{1.5}
\begin{table}[h]
	\centering
  \begin{tabular}{c|m{.8cm}||c|c|c|c}
\raisebox{-6pt}[-5pt][-5pt]{\large Ma}  & \multicolumn{1}{c||}{\raisebox{-6pt}[-5pt][-5pt]{\# cores}} &\multicolumn{4}{c}{$d$} \\
\cline{3-6}
    \ce $ $ & & $10^3$ & $2\times10^3$ & $4\times10^3$ & $8\times 10^3$  \\ 
    \hline\hline
     \ce time & \ce 1 & $6.9$ & $44$ & $310$ & $2300$   
    \\ 
   \hline
     \ce memory & \ce 1  & $730\,d$  & $310\,d$ & $410\,d$ & $710\,d$ 
    \\ 
  \end{tabular}  	
  \caption{The wall-clock time (in sec) and memory use (in bytes) to compute $d$ digits of  $\sum_{k\ge0}^\Alt f(k)=\sum_{k\ge0}^\EM f(k)=\big(\zeta(-1+i,i),\zeta(i,i),\zeta(1+i,i),\zeta(2+i,i)\big)$ on one core, 
  	for $f$ as in \eqref{eq:f,vect} and 
  	each of the listed values of $d$ -- by using the (non-parallelizable) built-in Mathematica command \texttt{HurwitzZeta[]}.}
	\label{tab:vect-Ma}
\end{table}

One can see that, in terms of the execution time, our implementation of the EM formula performs significantly better than the built-in command \texttt{HurwitzZeta[]}, being about $6$ times as fast for $d=1000$ digits of accuracy and about $14$ times as fast for $d=8000$. In particular, this suggests that our code of the calculations by the EM formula is rather efficiently optimized for the execution time. 

The Alt summation formula performs even better, especially in the parallelized version, being about $10$ times as fast as \texttt{HurwitzZeta[]} for $d=1000$ and about $23$ times as fast for $d=8000$.  

On the other hand, \texttt{HurwitzZeta[]} is substantially better in terms of memory use than our implementations of the EM and Alt summation formulas. 

It was also suggested to the author of this paper to consider the performance of the Richardson extrapolation process (REP), described e.g.\ in \cite{sidi}. 
Results of the corresponding numerical experiment are reported in Table~\ref{tab:vect-R}. 
Details are given in Mathematica notebook \verb!\vector\richardsonVect.nb! and its pdf image \verb!\vector\richardsonVect.pdf!. 
The label {\large REP} in the upper left cell in 
Table~\ref{tab:vect-R} (as well as in Table~\ref{tab:euler-R} to follow) indicates the use of the REP. 

\setlength\tabcolsep{4pt}
\renewcommand{\arraystretch}{1.5}
\begin{table}[h]
	\centering
  \begin{tabular}{c|m{.8cm}||c|c|c|c}
\raisebox{-6pt}[-5pt][-5pt]{\large REP}  & \multicolumn{1}{c||}{\raisebox{-6pt}[-5pt][-5pt]{\# cores}} &\multicolumn{4}{c}{$d$} \\
\cline{3-6}
    \ce $ $ & & $23$ & $31$ & $40$ & $49$  \\ 
    \hline\hline
     \ce time & \ce 1 & $9.3$ & $36$ & $160$ & $670$   
    \\ 
   \hline
     \ce memory & \ce 1   & $2.0\times10^6\,d$ & $5.7\times10^6\,d$ & $19\times10^6\,d$   & $62\times10^6\,d$ 
    \\ 
  \end{tabular}  	
  \caption{The wall-clock time (in sec) and memory use (in bytes) to compute $d$ digits of 
$\big(\zeta(-1+i,i),\zeta(i,i),\zeta(1+i,i),\zeta(2+i,i)\big)$ (on one core), 
  	for $f$ as in \eqref{eq:f,vect} and 
  	each of the listed values of $d$ -- by using the (non-parallelizable) REP.}
	\label{tab:vect-R}
\end{table}

One can see that that the convergence of the REP is very slow, in comparison: to gain just $8$ or $9$ digits of accuracy, one needs to quadruple the execution time.  
At this rate, it would take the REP about $\frac{670}{365\times 24\times 3600}\times 4^{95}
\approx3\times 10^{52}$ years to compute $1000$ digits of the generalized sum  
-- whereas the corresponding calculation using the Alt summation formula takes only $0.70$ sec, as shown in Table~\ref{tab:vect}. 

The main underlying reason for this stark contrast seems to be the fact that the REP takes into account only the exponents -- but not the coefficients -- in 
the asymptotic expansions on which that method is based; see lines 2--3 after formula (1.1.2) on page~21 in \cite{sidi}. 
Thus, much of the available information is neglected in the REP. 

Also, because of its recursive nature, it appears that calculations by the REP cannot be parallelized. 

Approximations to the value $\zeta(2)=\zeta(2,1)=\pi^2/6$ of the Riemann zeta function were also computed in \cite{sidi} by the 
GREP (Generalization of the Richardson Extrapolation Process). However, as stated on page~138 in \cite{sidi}, with one choice of the relevant parameters of the GREP, ``[a]dding more terms to the process does not improve
the accuracy; to the contrary, the accuracy dwindles quite quickly. With the [other choice of the parameters], we are able to improve the accuracy to almost machine
precision.'' So, at least in this case, GREP provides much less accuracy than even the original REP -- cf.\ Table~\ref{tab:vect-R}. 

As noted on page~57 in \cite{sidi} concerning generalizations of the REP, ``[the] problems of convergence and stability [...]
turn out to be very difficult mathematically, especially because of this generality. As a
result, the number of the meaningful theorems that have been obtained and that pertain
to convergence and stability has remained small.'' 

In contrast with the explicit and rather easy to use upper bounds on the remainders for EM formula and the Alt summation formula -- such as the ones given by \eqref{eq:R_m^EM<} and 
\eqref{eq:R bound}, only error bounds of generic form $O(\cdot)$ seem to be available for the REP, without specification of the corresponding constant factors. This makes it more difficult to design calculations by the REP or its generalizations. 

No specific execution time or memory use data on the REP or its generalizations seem to be given in \cite{sidi} or elsewhere.

\subsection{Example (Calculation of the Euler constant): simple \texorpdfstring{$f$}{f}, comparatively complicated \texorpdfstring{$F$}{F}, and simple derivatives \texorpdfstring{$f^{(2j)}$}{} } \label{euler}

The Euler constant is defined by the formula 
\begin{equation}\label{eq:ga:=}
	\gaeu:=\lim_{n\to\infty}(\H_n-\ln n),  
\end{equation}
where 
\begin{equation}\label{eq:H_c}
	\H_n:=\sum_{\al=1}^n\frac1\al,   
\end{equation}
the $n$th harmonic number. 
%
Let here 
\begin{equation}\label{eq:f,euler}
	f(x):=\frac1{x+1}, 
\end{equation}
with the antiderivative 
\begin{equation}\label{eq:F,euler}
	F(x):=\ln(x+1)  
\end{equation}
for real $x\ge0$. 
Note that here values of $F$ are significantly harder to compute with high accuracy than values of $f$. 

Take any natural $c\ge(m+3)/2$. 
Since $\Re z\ge0$ implies $|z+1|\ge\Re z+1\ge1$, all the conditions of Proposition~\ref{prop:cauchy} will hold for $f_c$ and $a_c:=c+a$ in place of $f$ and $a$ (respectively) if $a=0$, $\la=0$, and $\mu=1$. So, by Remark~\ref{rem:m_0}, Corollaries~\ref{cor:series} and \ref{cor:seriesEM} will hold with $m>m_0=1$, which 
will be assumed in this example. 
In view of Corollary~\ref{cor:series}, \eqref{eq:G_m,alt2}, \eqref{eq:F,euler}, and \eqref{eq:sum ga_j}, here 
the generalized sum $\sum_{k\ge0}^\Alt f(k)$ in \eqref{eq:series,c} equals 
\begin{equation*}
	\gaeu=\H_c+\ln2-
	\tau_{m,1}\,\ln(2c+1)
	-\sum_{j=2}^{m}\tau_{m,j}\,\ln\big((2c+1)^2-(j-1)^2\big)-R_{m,f_c}(\infty). 
\end{equation*}
To obtain the latter expression, we rewrote the term $F(c-1/2-\al/2)+F(c-1/2+\al/2)=\break 
\ln(c+1/2-\al/2)+\ln(c+1/2+\al/2)$ in the expression of $G_{m,F}(c)$ in \eqref{eq:series,c} (cf.\ \eqref{eq:G_m,alt2}) as  $-2\ln2
+\ln\big((2c+1)^2-(j-1)^2\big)$ for $j:=1+\al$, which allows one to almost halve the number of harder-to-compute values of the logarithm function. 

By Remark~\ref{rem:R}, one may take 
$
	M_{2m}=\dfrac{(2m-1)!}{(c-m/2+1/2)^{2m}}, 
$ 
whence, by \eqref{eq:|R|<} and \eqref{eq:sum gaj}, here we have 
\begin{align}
	|R_{m,f_c}(\infty)|\le R^{**}_{m,c}
	&:=
	\frac{1.001\pi}{(2m+1)2m}\,\Big(\frac{\La_*}4\Big)^m 
	\frac{m^{2m+1}}{(c-m/2+1/2)^{2m}} \label{eq:R_c,infty,eu},   
\end{align}
which in this particular case yields a slight improvement on the general bound $R^*_{m,c}$ in \eqref{eq:R_c,infty} on $|R_{m,f_c}(\infty)|$, and the previously assumed condition $c\ge(m+3)/2$ can now be relaxed to $c>(m-1)/2$. 

Similarly, in view of Corollary~\ref{cor:seriesEM}, \eqref{eq:G^EM_m}, and \eqref{eq:F,euler},  
the generalized sum $\sum_{k\ge0}^\EM f(k)$ in \eqref{eq:series,c,EM} equals 
\begin{equation*}
	\gaeu=\H_{c+1}-\ln(c+1)-\frac1{2(c+1)}+\sum_{j=1}^{m-1}\frac{B_{2j}}{2j\,(c+1)^{2j}}
+R^\EM_{m,f_c}(\infty). 
\end{equation*}
By \eqref{eq:R^EM<}, assuming $m\ge4$, here we have 
\begin{align}
	|R^\EM_{m,f_c}(\infty)|\le R^{\EM,**}_{m,c}
	&:=
	\frac{2.02(2m-2)!}{(2\pi)^{2m-1}(c+1)^{2m-1}} \label{eq:REM_c,infty,eu},   
\end{align}
which in this particular case yields a slight improvement on the general bound $R^{\EM,*}_{m,c}$ in \eqref{eq:REM_c,infty} on $|R^\EM_{m,f_c}(\infty)|$. 

In this example, in contrast with the previous ones,  
time measurements  
show that in the considered range of values of $d$ one has 
$T_F\approx10d^{1/2}T_f$; 
see 
Mathematica notebooks \verb!\euler\1-over-k-timing.nb! and \verb!\euler\log-timing.nb! and the corresponding pdf images  for details. 
So, here $T_F>>T_f\vee T_\tau$. 
Also, by the mentioned ``halving'' the number of ``costly'' values of $F$ to compute, here we should replace the term $2T_F$ in the expression of $K$ in \eqref{eq:om} by $T_F$. So, 
\begin{equation*}
\om\approx\om_d:=\frac\ka{10d^{1/2}}\quad\text{and}\quad m\approx m_{\om_d},  
\end{equation*}
in accordance with \eqref{eq:m_om}. 

Accordingly, we will take here 
\begin{equation*}
	m=2\lceil \tfrac12\,m_{\om_d}\rceil,  
\end{equation*}
for $m$ to be an even integer, as was assumed to be in Section~\ref{par}. 
Formula \eqref{eq:cc=} is replaced here by 
\begin{equation*}
	c=\Big\lceil \frac{m-1}2
	+\Big(2\times10^d\,\frac{1.001 \ka^{2m}m^{2m+1}}{(2m+1)2m}\Big)^{1/(2m)} \Big\rceil,  
\end{equation*}
since the bound $R^{**}_{m,c}$ in \eqref{eq:R_c,infty,eu} here replaces the bound $R^*_{m,c}$ in \eqref{eq:R_c,infty} on $|R_{m,f_c}(\infty)|$. 

Then 
conditions $c>(m-1)/2$ 
and $m>m_0=1$ will hold. 

Tables~\ref{tab:euler} and \ref{tab:euler-EM} are similar to Tables~\ref{tab:sqrt} and \ref{tab:sqrt-EM}, respectively. 
%
The annotated code and details of these calculations are given in
Mathematica notebooks \verb!\euler\AEuler.nb!, \verb!\euler\AParEuler.nb!, \verb!\euler\AEMEuler.nb!, \verb!\euler\AEMParEuler.nb!,  and their pdf images, with file name extension \verb!.pdf! in place of \verb!.nb!.  

\setlength\tabcolsep{3.pt}
\renewcommand{\arraystretch}{1.5}
\begin{table}[h]
	\centering
  \begin{tabular}{c|m{.8cm}||c|c|c|c|c|c|c|c}
\raisebox{-6pt}[-5pt][-5pt]{\large Alt}  & \multicolumn{1}{c||}{\raisebox{-6pt}[-5pt][-5pt]{\# cores}} &\multicolumn{8}{c}{$d$} \\
\cline{3-10}
    \ce $ $ & & $10^3$ & $2\times10^3$ & $4\times10^3$ & $8\times 10^3$ & $16\times 10^3$ & $32\times 10^3$ & $64\times 10^3$ & $128\times 10^3$ \\ 
    \hline\hline
     \ce time & \ce 1 & $0.09$ & $0.29$ & $1.1$ & $6.0$ & $31$ & $150$ & $820$ & $-$  
    \\ 
   \hline
     \ce memory & \ce 1  & $270\,d$  & $270\,d$ & $320\,d$ & $420\,d$ & $620\,d$ & $1500\,d$ & $680\,d$ & $-$ 
        \\  
   \hline\hline          
     \ce time & \ce 12  & $0.10$  & $0.13$ & $0.26$ & $0.87$ & $3.9$  & $21$  & $100$  & $510$ 
    \\ 
    \hline    
   \ce memory & \ce 12  & $1900\,d$  & $1700\,d$ & $2200\,d$ & $2200\,d$ & $6300\,d$ & $13000\,d$ & $240\,d$ & $230\,d$ 
    \\ 
  \end{tabular} 
    	\caption{The wall-clock time (in sec) and memory use (in bytes) to compute $d$ digits of the Euler constant, on one core and on $12$ parallel cores, 
  	for 
  	$f$ as in \eqref{eq:f,euler} and 
  	each of the listed values of $d$ -- by using \eqref{eq:series,c}.}
	\label{tab:euler}
\end{table}

\setlength\tabcolsep{3pt}
\renewcommand{\arraystretch}{1.5}
\begin{table}[h]
	\centering
  \begin{tabular}{c|m{.8cm}||c|c|c|c|c|c|c|c}
\raisebox{-6pt}[-5pt][-5pt]{\large EM}  & \multicolumn{1}{c||}{\raisebox{-6pt}[-5pt][-5pt]{\# cores}} &\multicolumn{8}{c}{$d$} \\
\cline{3-10}
    \ce $ $ & & $10^3$ & $2\times10^3$ & $4\times10^3$ & $8\times 10^3$ & $16\times 10^3$ & $32\times 10^3$ & $64\times 10^3$ & $128\times 10^3$ \\ 
    \hline\hline
     \ce time & \ce 1 & $0.06$ & $0.26$ & $1.0$ & $5.7$ & $14$ & $70$ & $390$ & $-$  
    \\ 
   \hline
     \ce memory & \ce 1  & $130\,d$  & $110\,d$ & $140\,d$ & $240\,d$ & $1700\,d$ & $2000\,d$ & $3100\,d$ & $-$ 
        \\  
   \hline\hline          
     \ce time & \ce 12  & $0.07$  & $0.17$ & $0.66$ & $3.0$ & $15$  & $71$  & $470$  & $2600$ 
    \\ 
    \hline    
   \ce memory & \ce 12  & $150\,d$  & $170\,d$ & $160\,d$ & $240\,d$ & $500\,d$ & $1000\,d$ & $25000\,d$ & $ 39000\,d$ 
    \\ 
  \end{tabular}  	
  \caption{The wall-clock time (in sec) and memory use (in bytes) to compute $d$ digits of the Euler constant on one core and on $12$ parallel cores, 
  	for $f$ as in \eqref{eq:f,euler} and 
  	each of the listed values of $d$ -- by using \eqref{eq:series,c,EM}.}
	\label{tab:euler-EM}
\end{table}

We have also considered the performance of the Richardson extrapolation process (REP) in the present example. Results of the corresponding numerical experiment are reported in Table~\ref{tab:euler-R}. 
Details are given in 
Mathematica notebook \verb!\euler\richardsonEuler.nb! and its pdf image \verb!\euler\richardsonEuler.pdf!.

\setlength\tabcolsep{4pt}
\renewcommand{\arraystretch}{1.5}
\begin{table}[h]
	\centering
  \begin{tabular}{c|m{.8cm}||c|c|c|c|c|c|c}
\raisebox{-6pt}[-5pt][-5pt]{\large REP}  & \multicolumn{1}{c||}{\raisebox{-6pt}[-5pt][-5pt]{\# cores}} &\multicolumn{7}{c}{$d$} \\
\cline{3-9}
    \ce $ $ & & $74$ & $92$ & $113$ & $136$ & $161$ & $188$ & $218$  \\ 
    \hline\hline
     \ce time & \ce 1 & $0.36$ & $1.4$ & $2.6$ & $19$  & $75$ & $320$ & $1300$   
    \\ 
   \hline
     \ce memory & \ce 1   & $590\,d$ & $520\,d$ & $510\,d$   & $420\,d$  & $400\,d$ & $380\,d$   & $360\,d$ 
    \\ 
  \end{tabular}  	
  \caption{The wall-clock time (in sec) and memory use (in bytes) to compute $d$ digits of the Euler constant on one core, 
  	for $f$ as in \eqref{eq:f,euler} and 
  	each of the listed values of $d$ -- by using the (non-parallelizable) REP.}
	\label{tab:euler-R}
\end{table}

Projections based on the data presented in Table~\ref{tab:euler-R} 
suggest that it would take the REP about $18\times10^4$ years to compute $1000$ digits of $\gaeu=\sum_{k\ge0}^\Alt f(k)=\sum_{k\ge0}^\EM f(k)$ (see again Mathematica notebook \verb!\euler\richardsonEuler.nb! and its pdf image \verb!\euler\richardsonEuler.pdf! for details) -- compared with $0.06$ sec by the EM formula itself, without the Richardson extrapolation. 

The present example is to an extent similar to the example in Subsection~\ref{zeta}, the main difference between these two examples being that here the time $T_F$ to compute a value of $F$ is much greater for large $d$ than the time $T_f$ to compute a value of $f$.  
On the other hand, the main difference between the present example and 
the examples in Subsections~\ref{sqrt} and \ref{erfInv}
is that in this exceptional case as well the values of the higher-order derivatives of $f$ are about as easy to compute as the values of the function $f$ itself; see \eqref{eq:der-vect} with $p=1$ and $\de=1$. 
Thus,  
the present example 
represents one of the few least favorable situations for the Alt summation formula in comparison with the EM one. 

Yet, we see that, as in 
the example in Subsection~\ref{zeta}, the execution time numbers in Table~\ref{tab:euler} are of the same order of magnitude as the corresponding numbers in Table~\ref{tab:euler-EM}; the latter numbers are somewhat better for the one-core calculations and somewhat worse for the 12-core ones. 
However, according to \cite{brent-log}, $d$ values of the logarithmic function can be computed with $\asymp d$ digits of precision in time $T_{\log} \asymp M(d)\,d\,\log d\asymp d^2\, \log^2 d\, \log\log d$, where $M(d)\asymp d\, \log d\, \log\log d$ is the time needed to perform precision $d$ multiplication. On the other hand, the best known algorithms for Bernoulli numbers \cite{fillebrown,harvey10} will compute the first $d$ Bernoulli numbers with with $\asymp d$ digits of precision in time
$\;\asymp (d^2/\log d)\,M(d\,\log d)\asymp d^3\,\log d\,\log\log d>>T_{\log}$.  
So, for very large $d$, it should be expected that even on one core the Alt summation formula will perform faster than the EM one.

The memory use numbers in Table~\ref{tab:euler} are more or less similar to the corresponding numbers in Table~\ref{tab:euler-EM} for the one-core calculations, but about 10 times as large for the 12-core ones. One may note the big drop in the reported memory use from $13000\,d$ to $240\,d$ in the last row of Table~\ref{tab:euler} (and also the smaller drop $1500\,d$ to $680\,d$ in the one-core memory use row of the same table) when $d$ increases from $32\times 10^3$ to $64\times 10^3$. 
 
Quite a similar drop occurs in a much simpler, distilled version of this situation, when just the sum of a large number of values of the logarithmic function is computed with high accuracy; see details in Mathematica notebook \verb!\euler\memoryMatter.nb! and its pdf image \verb!\euler\memoryMatter.pdf!.  
Such a drop has not been observed in Maple \cite{ierley-maple}. 
According to a representative of Wolfram Research (the developer of Mathematica), the drop occurs because Mathematica switches from one method to another depending on various parameters of the computational process. 
Somewhat similar drops (in that case not only in the memory use but also in the execution time) occur in Mathematica calculations of the Bernoulli numbers; see details in 
Mathematica notebook \verb!B-time,mem.nb! and its pdf image \verb!B-time,mem.pdf!.
It is possible that thresholds for the values of parameters of the computational process determining the mentioned switches in methods have not been chosen or updated to be near their optimal values. 
This suggests some potential for further improvement in the execution time and memory use in Mathematica.



\section{Proofs
}\label{proof}
\begin{normalsize}
\begin{proof}[Proof of Theorem~\ref{th:}]
Take any $k=0,\dots,n-1$ and consider the Taylor expansion 
\begin{equation*}
	f(x)=\sum_{i=0}^{2m-1}\frac{f^{(i)}(k)}{i!}\,u^i
	+\frac{u^{2m}}{(2m-1)!}\,\int_0^1\dd s\,(1-s)^{2m-1}f^{(2m)}(k+su)
\end{equation*}
for all $x\in(k-m/2,k+m/2]$, where $u:=x-k$. 
Integrating both sides of this identity in $x\in(k-j/2,\break 
k+j/2]$ (or, equivalently, in $u\in(-j/2,+j/2]$) for each $j=1,\dots,m$, then multiplying by $\ga_{m,j}$, and then summing in $j$, one has 
\begin{equation}\label{eq:A=S+R}
	A_{m,k}=S_{m,k}+R_{m,k},
\end{equation}
where 
\begin{align}
	A_{m,k}&:=\sum_{j=1}^m\ga_{m,j}\int_{k-j/2}^{k+j/2}\dd x\,f(x), \notag \\ 
	S_{m,k}&:=\sum_{\al=0}^{m-1}\frac{f^{(2\al)}(k)}{(2\al+1)!\,2^{2\al}}\,\sum_{j=1}^m\ga_{m,j} j^{2\al+1}, 
	\label{eq:S_mk}\\ 
	R_{m,k}&:=\frac1{(2m-1)!}\,\int_0^1\dd s\,(1-s)^{2m-1}
	\sum_{j=1}^m\ga_{m,j} \int_{-j/2}^{j/2}\dd u\,u^{2m} f^{(2m)}(k+su) \notag \\ 
	&\,=\frac1{(2m-1)!\,2^{2m+1}}\,\int_0^1\dd s\,(1-s)^{2m-1}\int_{-1}^{1}\dd v\,v^{2m}
	\sum_{j=1}^m\ga_{m,j}j^{2m+1} f^{(2m)}(k+jsv/2). \notag 
\end{align}
Clearly, by \eqref{eq:R_m}, 
\begin{equation}\label{eq:sum R=R}
	\sum_{k=0}^{n-1}R_{m,k}=R_m. 
\end{equation}

Next, take any $\al=0,\dots,m-1$. Then, by \eqref{eq:ga_j},   
\begin{equation}\label{eq:sum_j}
\begin{gathered}
	-\binom{2m}{m}\,\sum_{j=1}^m\ga_{m,j} j^{2\al+1}
	=2\sum_{j=1}^m (-1)^j\binom{2m}{m+j} j^{2\al}
	=2\sum_{j=-m}^{-1} (-1)^j\binom{2m}{m+j} j^{2\al} \\ 
	=\sum_{j=-m}^{m} (-1)^j\binom{2m}{m+j} j^{2\al}-\binom{2m}{m}\,\ii\{\al=0\}. 
\end{gathered}	
\end{equation}
Here and elsewhere, 
$\ii\{\cdot\}$ denotes the indicator function. 
%
The power function $\psi_\al$ defined by the formula $\psi_\al(z):=z^{2\al}$ for real $z$ is obviously a polynomial of degree $2\al<2m$. Hence, $\De^{2m}\psi_\al=\psi_\al^{(2m)}=0$, where (for any natural $p$) 
$\De^p$ is the $p$th power of the symmetric difference operator $\De$ defined by the formula $(\De\phi)(z):=\phi(z+1/2)-\phi(z-1/2)$ for all functions $\phi\colon\R\to\R$ and all real $z$, so that 
\begin{equation}\label{eq:De^p}
	(\De^p\phi)(z)=\sum_{\be=0}^p(-1)^\be\binom p\be\phi(z+p/2-\be).  
\end{equation}
Therefore,
\begin{equation*}
	(-1)^m\sum_{j=-m}^{m} (-1)^j\binom{2m}{m+j} j^{2\al}
	=\sum_{\be=0}^{2m}(-1)^\be\binom{2m}{\be}(m-\be)^{2\al}
	=(\De^{2m}\psi_\al)(0)=0. 
\end{equation*}
It follows from \eqref{eq:sum_j} that 
\begin{equation}\label{eq:canceling}
\sum_{j=1}^m\ga_{m,j} j^q=\ii\{q=1\}\quad\text{for }q=1,3,\dots,2m-1, 	
\end{equation}
which in particular confirms 
the equality of the first two sums in 
\eqref{eq:sum ga_j}, involving the $\ga_{m,j}$'s, to $1$.  
The second equality in \eqref{eq:sum ga_j} now follows from, say, yet to be proved equality \eqref{eq:A_m,alt1} by taking there any natural $n\ge m$ and letting $f=\ii_{[m/2-1,n-m/2]}$; then each of the integrals in \eqref{eq:A_m}--\eqref{eq:A_m,alt1} equals $n-m+1\ne0$. 

In view of \eqref{eq:S_mk}, it also follows from \eqref{eq:canceling} that  
\begin{equation}\label{eq:S=f}
	S_{m,k}=f(k). 
\end{equation}

Take 
any $j=1,\dots,m$ and let $G(y):=G_j(y):=F(y-j/2)$ for all real $y$ -- where, recall, $F$ is any antiderivative of the function $f$. Then 
\begin{align*}
	\sum_{k=0}^{n-1}\int_{k-j/2}^{k+j/2}
	&=\sum_{k=0}^{n-1}[F(k+j/2)-F(k-j/2)] \\ 
	&=\sum_{k=0}^{n-1}G(k+j)-\sum_{k=0}^{n-1}G(k) \\ 
	&	=\sum_{k=j}^{n-1+j}G(k)-\sum_{k=0}^{n-1}G(k) \\ 
	&	=\sum_{k=n}^{n-1+j}G(k)-\sum_{k=0}^{j-1}G(k) \\ 
	&	=\sum_{i=0}^{j-1}G(n-1+j-i)-\sum_{i=0}^{j-1}G(i) \\ 
	&	=\sum_{i=0}^{j-1}[F(n-1+j/2-i)-F(i-j/2)]
	=\sum_{i=0}^{j-1}\int_{i-j/2}^{n-1+j/2-i} 
	=\sum_{i=0}^{j-1}\int_{-1+j/2-i}^{n-1+j/2-i},  
\end{align*}
since $\{i-j/2\colon i=0,\dots,j-1\}=\{-1+j/2-i\colon i=0,\dots,j-1\}$. 
So, 
\begin{equation}\label{eq:sum A=A}
	\sum_{k=0}^{n-1}A_{m,k}=\sum_{k=0}^{n-1}\sum_{j=1}^m\ga_{m,j}\int_{k-j/2}^{k+j/2}
	=\sum_{j=1}^m\ga_{m,j}\sum_{k=0}^{n-1}\int_{k-j/2}^{k+j/2}
		=\sum_{j=1}^m\ga_{m,j}\sum_{i=0}^{j-1}\int_{i-j/2}^{n-1+j/2-i}=A_m   
\end{equation}
by 
the definition of $A_m$ in 
\eqref{eq:A_m}, 
and the second equality in \eqref{eq:A_m} also follows.  
Now \eqref{eq:} follows from \eqref{eq:A=S+R}, \eqref{eq:sum R=R}, \eqref{eq:S=f}, and \eqref{eq:sum A=A}. 

To show that the 
first expression in \eqref{eq:A_m,alt1} equals that in \eqref{eq:A_m}, note that the conjunction of the conditions $j\in\{1,\dots,m\}$ and $i\in\{0,\dots,j-1\}$ is equivalent to the conjunction of the conditions $\al\in\{1-m,\dots,m-1\}$, $j\in\{1+|\al|,\dots,m\}$, and $j=1+|\al|$ mod $2$, where $\al:=2i-j+1$. So, in view of \eqref{eq:A_m}, 
\begin{equation*}
	A_m=\sum_{\al=1-m}^{m-1}\,\int_{\al/2-1/2}^{n-1/2-\al/2}{\kern-5pt}\dd x\,f(x)
	\;\sum_{j=1+|\al|}^m\ga_{m,j}\ii\{j=1+|\al|\text{ mod }2\}. 
\end{equation*}
In view of 
\eqref{eq:tau_j}, the first expression of $A_m$ in 
\eqref{eq:A_m,alt1} equals that in \eqref{eq:A_m}.  
The second equality in \eqref{eq:A_m,alt1} follows because $\{\al/2-1/2\colon\al=1-m,\dots,m-1\}=\{-1/2-\al/2\colon\al=1-m,\dots,m-1\}$.  
As for 
the two equalities in \eqref{eq:A_m,alt2}, they are obvious. 

Inequality \eqref{eq:|R|<} 
is obvious. 

Theorem~\ref{th:} is completely proved. 
\end{proof}

\begin{proof}[Proof of Proposition~\ref{lem:=B_p}]
Let $f(x)=x^p$. Then, in view of the condition 
$p=0,\dots,2m-1$, we have $f^{(2m)}=0$ and hence $R_m=0$.  
So, by \eqref{eq:}, with $C:=-\frac1{p+1}\sum_{j=1}^m\sum_{i=0}^{j-1}(i-j/2)^{p+1}$,  
\begin{align*}
	\sum_{k=0}^{n-1}k^p&=C+\frac1{p+1}\sum_{j=1}^m\ga_{m,j}\sum_{i=0}^{j-1}\Big(n-1+\frac j2-i\Big)^{p+1} \\ 
	&=C+\frac1{p+1}\sum_{j=1}^m\ga_{m,j}\sum_{i=0}^{j-1}
	\sum_{\al=0}^{p+1}\binom{p+1}\al\Big(\frac j2-i-1\Big)^\al n^{p+1-\al} \\ 
	&=
	C+\frac1{p+1}\sum_{\al=0}^{p+1}\binom{p+1}\al n^{p+1-\al}
	\sum_{j=1}^m\ga_{m,j}\sum_{i=0}^{j-1}\Big(\frac j2-i-1\Big)^\al,  
\end{align*}
which is a polynomial in $n$. Comparing the coefficient of $n^{p+1-\al}$ for $\al=p$ in this polynomial with the corresponding coefficient in the Faulhaber formula \eqref{eq:faulhaber}, we obtain the first equality in \eqref{eq:=B_p}. The second equality there is obtained quite similarly to the equalities in \eqref{eq:A_m,alt1} and \eqref{eq:A_m,alt2}. 
\qed 

\rule{0pt}{0pt}\big(A different, longer proof of Proposition~\ref{lem:=B_p}, which does not use \eqref{eq:}, was given by Amdeberhan \cite{amdeber}.\big)
\end{proof}

\begin{proof}[Proof of Proposition~\ref{prop:cauchy}]
Take any nonnegative integer $\al$ and any real $x\ge-m/2-1/2$. Then the condition $a\ge(m+3)/2$ yields $x+a\ge1$. Let $C_x(r_x)$ denote the circle $\{z\in\CC\colon
|z-x|=r_x\}$, 
where 
$
r_x:=(x+a)\sin\th_0. 	
$ 
Then it is easy to see that $C_x(r_x)$ is contained in the convex set $S$. So, by the Cauchy integral formula, 
\begin{equation*}
	|f^{(\al)}(x)|=\Big|\frac{\al!}{2\pi i}\,\oint_{C_x(r_x)}\frac{\dd z\,f(z)}{(z-x)^{\al+1}}\Big|
	=\Big|\frac{\al!}{2\pi {r_x^\al}}\,\int_0^{2\pi}\dd t\,e^{-i\al t}\,f(x+r_xe^{it})\Big|. 
\end{equation*}
Next, for any $t\in[0,2\pi]$ the conditions $x+a\ge1$ and $r_x=(x+a)\sin\th_0$ imply 
by \eqref{eq:|f|<} and the conditions $\mu\ge0$ and $\la\ge0$, 
\begin{equation*}
	|f(x+r_xe^{it})|\le\mu\,|x+r_xe^{it}+a+1|^\la
	\le\mu\,(x+a)^\la(2+\sin\th_0)^\la. 
\end{equation*} 
Thus, for all real $x\ge-m/2-1/2$ 
\begin{equation}\label{eq:der-bound}
	|f^{(\al)}(x)|
	\le\mu\al!\frac{(2+\sin\th_0)^\la}{\sin^\al\th_0}\frac1{(x+a)^{\al-\la}}=:g_\al(x). 
\end{equation}
To complete the proof of \eqref{eq:R_m<}, it remains to refer to \eqref{eq:|R|<} and \eqref{eq:M,eu-const}, and to do straightforward calculations. 
Inequality \eqref{eq:R_m^EM<} is obtained similarly, using \eqref{eq:R^EM<} and the inequality $\zeta(2m-1)<1.01$ for $m\ge4$ instead of \eqref{eq:|R|<} and \eqref{eq:M,eu-const}.  
Proposition~\ref{prop:cauchy} is now proved.  
\end{proof}

\begin{proof}[Proof of Proposition~\ref{prop:sum gaj}]
The key to this proof is 

\begin{lemma}\label{lem:gaj}
For $j=1,\dots,m-1$ 
\begin{equation*}
	|\ga_{m,j}|j
	<\trho_{m,j}:=2m^{2m+1}(m-j)^{j-m-1/2}(m+j)^{-m-j-1/2}. 
\end{equation*}
\end{lemma}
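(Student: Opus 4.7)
The plan is to reduce the inequality to a statement about the error term in Stirling's formula. First I would use the definition \eqref{eq:ga_j} to rewrite
$$|\ga_{m,j}|\,j = \frac{2\binom{2m}{m+j}}{\binom{2m}{m}} = \frac{2(m!)^2}{(m+j)!\,(m-j)!}.$$
Then I would apply Stirling's formula in its exact form $n! = \sqrt{2\pi n}\,(n/e)^n e^{r_n}$ (with $r_n > 0$) to $(m!)^2$ and to $(m+j)!\,(m-j)!$. A direct simplification gives
$$\frac{(m!)^2}{(m+j)!(m-j)!} = \frac{m^{2m+1}}{\sqrt{m^2-j^2}\,(m+j)^{m+j}(m-j)^{m-j}}\cdot e^{2r_m - r_{m+j} - r_{m-j}}.$$
On the other hand, regrouping the exponents in the definition of $\trho_{m,j}$ yields
$$\trho_{m,j} = \frac{2\,m^{2m+1}}{\sqrt{m^2-j^2}\,(m-j)^{m-j}(m+j)^{m+j}},$$
so these two computations combine into the clean identity
$$|\ga_{m,j}|\,j \;=\; \trho_{m,j}\cdot \exp\bigl(2r_m - r_{m+j} - r_{m-j}\bigr).$$

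Thus the lemma reduces to proving the inequality $2r_m < r_{m+j} + r_{m-j}$ for $1\le j\le m-1$; equivalently, that the Stirling correction term $r$ is strictly convex as a function on $(0,\infty)$ (the constraint $j\le m-1$ is exactly what keeps the argument $m-j$ at least $1$). I expect this convexity to be the main obstacle, since one cannot simply compare the crude bounds $1/(12n+1) < r_n < 1/(12n)$ evaluated at $m-j,\,m,\,m+j$ — such bounds fail precisely for small $j$.

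The cleanest way around this is Binet's second representation
$$r(x) \;=\; 2\int_0^\infty \frac{\arctan(t/x)}{e^{2\pi t}-1}\,\dd t\qquad(x>0),$$
which is absolutely convergent and admits differentiation under the integral sign (the integrand and its $x$-derivatives are dominated by an exponentially decaying function of $t$, uniformly on compact $x$-intervals in $(0,\infty)$). Differentiating twice yields
$$r''(x) \;=\; 2\int_0^\infty \frac{1}{e^{2\pi t}-1}\cdot\frac{2xt}{(x^2+t^2)^2}\,\dd t \;>\; 0,$$
so $r$ is strictly convex on $(0,\infty)$. Applying this to the symmetric triple $m-j,\,m,\,m+j$ with $j\ge 1$ gives $r_{m+j}+r_{m-j} > 2r_m$, hence $e^{2r_m - r_{m+j} - r_{m-j}} < 1$, and the strict inequality $|\ga_{m,j}|\,j < \trho_{m,j}$ follows from the boxed identity above.
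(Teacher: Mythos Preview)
Your proof is correct and takes a genuinely different route from the paper's. You reduce the inequality to the strict convexity of the Stirling remainder $r(x)$ and dispatch that in one stroke via Binet's second integral representation. The paper instead works entirely elementarily: it forms the ratio $r_{m,j}:=\trho_{m,j}/(|\ga_{m,j}|j)$ and the successive quotient $q_{m,j}:=r_{m,j+1}/r_{m,j}$, then chains together several monotonicity and convexity computations (e.g.\ $\partial^2_j\ln q_{m,j}>0$, $\ln q_{m,0}>0$, $(\partial_j\ln q_{m,j})|_{j=0}>0$, $r_{m,1}>1$) to conclude that $r_{m,j}>1$. Your argument is shorter and more conceptual, at the cost of invoking Binet's formula as an external input; the paper's argument is self-contained calculus but requires several separate verifications. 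Both establish the same strict inequality; your identification of the inequality with convexity of $r$ makes transparent \emph{why} the Stirling-based bound $\trho_{m,j}$ works.
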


The bound $\trho_{m,j}$ on $|\ga_{m,j}|j$ was obtained by using the definition \eqref{eq:ga_j} of $\ga_{m,j}$ together with the Stirling approximation formula. 
Therefore, this bound is asymptotically tight when $m-j$ is large. 

On the other hand, the values of $|\ga_{m,j}|j$ are comparatively small when $m$ is large but $m-j$ is not, and so, the contribution of these terms into the sum $\sum_{j=1}^m|\ga_{m,j}|j^{2m+1}$ in \eqref{eq:sum gaj} is relatively small. 


\begin{proof}[Proof of Lemma~\ref{lem:gaj}] 
Consider the ratio 
\begin{equation*}
	r_{m,j}:=\frac{\trho_{m,j}}{|\ga_{m,j}|j} 
\end{equation*}
and then the ratio 
\begin{equation*}
	q_{m,j}:=\frac{r_{m,j+1}}{r_{m,j}}=\left(\frac{m-j}{m-1-j}\right)^{m-j-1/2} \left(\frac{m+j}{m+j+1}\right)^{m+j+1/2}.    
\end{equation*}
for $j=1,\dots,m-2$. Let us take here the liberty to deal with $q_{m,j}$ as a function of real arguments $m\in(1,\infty)$ and and $j\in[0,m-1)$, with the value of the function at the point $(m,j)$ defined as the value of the latter displayed expression. Then we have 
\begin{equation*}
	\pd 2j\ln q_{m,j}=\frac{2 m(2 j+1) \left(m^2+j^2+j\right)}{(m-1-j)^2(m-j)^2 (m+j)^2 (m+j+1)^2}>0, 
\end{equation*}
so that $\ln q_{m,j}$ is strictly convex in $j$. 

Next, $\pd 2m\ln q_{m,0}=\frac2{m (m^2-1)^2}>0$, so that $\ln q_{m,0}$ is strictly convex in $m$. Moreover, $\ln q_{m,0}\to0$ as $m\to\infty$. So, $\ln q_{m,0}>0$. 

Further, $\fd{}m\big[\big(\pd{}j\ln q_{m,j}\big)\big|_{j=0}\big]=-\frac{m^2+1}{m^2 (m^2-1)^2}<0$, so that $\big(\pd{}j\ln q_{m,j}\big)\big|_{j=0}$ is decreasing in $m$, with $\big(\pd{}j\ln q_{m,j}\big)\big|_{j=0}\to0$ as $m\to\infty$. So, $\big(\pd{}j\ln q_{m,j}\big)\big|_{j=0}>0$. 

Recalling now that $\ln q_{m,0}>0$ and $\ln q_{m,j}$ is convex in $j$, we conclude that $\ln q_{m,j}>0$ and $q_{m,j}>1$. 
So, in view of the definition of $q_{m,j}$, we see that $r_{m,j}$ is increasing in $j=1,\dots,m-1$. 

We also have $\fd2m\ln r_{m,1}=\frac2{m (m^2-1)^2}>0$, so that $\ln r_{m,1}$ is strictly convex in $m$. Moreover, $\ln r_{m,1}\to0$ as $m\to\infty$. So, $\ln r_{m,1}>0$ and $r_{m,1}>1$. Since $r_{m,j}$ is increasing in $j=1,\dots,m-1$, we now have $r_{m,j}>1$ for all $j=1,\dots,m-1$. Thus, in view of the definition of $r_{m,j}$, the proof of Lemma~\ref{lem:gaj} is complete. \qed
\end{proof}

Let us now turn back to the the proof of Proposition~\ref{prop:sum gaj}. 
The last sentence of Proposition~\ref{prop:sum gaj} is trivial. So, assume that $m\ge2$. 
By Lemma~\ref{lem:gaj} and \eqref{eq:La}, 
\begin{equation*}
	|\ga_{m,j}|j^{2m+1}<\trho_{m,j}j^{2m}
	=2m^{2m}\La(j/m)^m(1-j^2/m^2)^{-1/2}
	\le2m^{2m}\La_*^m(1-j^2/m^2)^{-1/2}  
\end{equation*}
for $j=1,\dots,m-1$, 
whence 
\begin{equation}\label{eq:sum ga<}
	\sum_{j=1}^m|\ga_{m,j}|j^{2m+1}
	<2m^{2m}\La_*^m\,\sum_{j=1}^{m-1}\frac1{\sqrt{1-j^2/m^2}}+|\ga_{m,m}|m^{2m+1}. 
\end{equation}
Since $\frac1{\sqrt{1-x^2}}$ is convex in $x\in[0,1)$, we have 
\begin{equation}\label{eq:sum 1/sq<}
	\sum_{j=1}^{m-1}\frac1{\sqrt{1-j^2/m^2}}\le\sum_{j=1}^{m-1}\int_{j-1/2}^{j+1/2}\frac{\dd x}{\sqrt{1-x^2/m^2}}
	<\int_0^m\frac{\dd x}{\sqrt{1-x^2/m^2}}=\frac\pi2\,m. 
\end{equation}
Next, let us show that for natural $m$ 
\begin{equation}\label{eq:ka>1}
	\ka_m:=\binom{2m}m\Big/\Big(2^{2 m} e^{-1/(8 m)}/\sqrt{\pi m}\Big)>1. 
\end{equation}
Let 
\begin{equation*}
	K_m:=\frac{\ka_{m+1}}{\ka_m}=\frac{(2 m+1)}{2 \sqrt{m (m+1)}}\,e^{-\frac{1}{8 m(m+1)}}.  
\end{equation*}
Then $\fd{}m\ln K_m=1/(8 m^2 (m+1)^2 (2m+1))>0$, so that $K_m$ is increasing in $m$, with $K_m\to1$ as $m\to\infty$. Hence, $K_m<1$ for all $m$. That is, $\ka_m$ is decreasing in $m$, with $\ka_m\to1$ as $m\to\infty$. Thus, the inequality in \eqref{eq:ka>1} is checked. It now follows from \eqref{eq:sum ga<}, \eqref{eq:sum 1/sq<}, and \eqref{eq:ga_j} that 
\begin{equation}
		\sum_{j=1}^m|\ga_{m,j}|j^{2m+1}
	<2m^{2m+1}\Big(\frac\pi2\La_*^m+2^{-2 m} e^{1/(8 m)} \sqrt{\pi/m}\Big)
	=\pi m^{2m+1}\La_*^m\,(1+\vp_m),  
\end{equation}
where $\vp_m:=2(4\La_*)^{-m} e^{1/(8 m)}/\sqrt{\pi m}$. Clearly, $\vp_m$ is decreasing in $m\ge1$. Also, 
$\vp_{26}<0.001$. So, $\vp_m<0.001$ for $m\ge26$.   
To complete the proof of Proposition~\ref{prop:sum gaj}, it remains to note that, by direct calculations, inequality \eqref{eq:sum gaj} holds for all $m=2,\dots,25$. 
\end{proof}

\begin{proof}[Proof of Proposition~\ref{prop:series}]
Take any natural $n$. Let 
\begin{equation}\label{eq:R_{m,f}}
	R_{m,f}(n):=R_m, 
\end{equation}
with $R_m$ as defined in \eqref{eq:R_m}.
Then, by \eqref{eq:R unif}, 
\begin{equation}\label{eq:R to R}
	R_{m,f}(n)\underset{n\to\infty}\longrightarrow R_{m,f}(\infty) 
\end{equation}
and, by \eqref{eq:}--\eqref{eq:A_m}, 
\begin{equation}\label{eq:=G-G-R}
	\sum_{k=0}^{n-1} f(k)=G_{m,F}(n)-G_{m,F}(0)-R_{m,f}(n).  
\end{equation}
So, 
\begin{equation}\label{eq:sum-G=}
	\sum_{k=0}^{n-1} f(k)-G_{m_0,F}(n)=[G_{m,F}(n)-G_{m_0,F}(n)]-G_{m,F}(0)-R_{m,f}(n). 
\end{equation}
Next, by the linearity of $G_{m,F}$ in $F$,   
\begin{equation}\label{eq:G-G=}
	G_{m,F}(n)-G_{m_0,F}(n)=[G_{m,T}(n)-G_{m_0,T}(n)]+[G_{m,F-T}(n)-G_{m_0,F-T}(n)],  
\end{equation}
where $T=T_{n,m_0,F}$ is the Taylor polynomial of order $p:=2m_0-1$ for the function $F$ at the point $n-1$, so that 
\begin{equation*}
	T(x)=\sum_{\al=0}^p\frac{F^{(\al)}(n-1)}{\al!}\,(x-n+1)^\al
\end{equation*}
for real $x$. 

By Proposition~\ref{lem:=B_p}, $G_{m,P_\be}(0)=B_\be=G_{m_0,P_\be}(0)$ for all $\be=0,\dots,p$, where $P_\be(x):=x^\be$ for real $x$. 
%
Since $T$ is a polynomial of degree $\le p$, it is a linear combination of the polynomials $P_0,\dots,P_p$.  
Therefore and because $G_{m,F}$ is linear in $F$, one has $G_{m,T}(0)=G_{m_0,T}(0)$. Since $p=2m_0-1$ and $m\ge m_0$, it also follows that $T^{(2m)}=0$ and hence, by \eqref{eq:R_{m,f}}, $R_{m,T'}(n)=0$. Thus, by \eqref{eq:=G-G-R}, 
\begin{equation*}
	\sum_{k=0}^{n-1} T'(k)=G_{m,T}(n)-G_{m,T}(0)=G_{m_0,T}(n)-G_{m_0,T}(0).   
\end{equation*}
Therefore and because $G_{m,T}(0)=G_{m_0,T}(0)$, we have 
\begin{equation}\label{eq:G=G}
G_{m,T}(n)=G_{m_0,T}(n), 	
\end{equation}
for all natural $n$. 
Further, the remainder $(F-T)(n-1+w)$ at point $n-1+w$ of the Taylor approximation $T$ of $F$ at $n$ equals $$F^{(p+1)}(n-1+\th w)w^{p+1}/(p+1)!=f^{(2m_0-1)}(n-1+\th w)w^{p+1}/(p+1)!$$ 
for all real $w$ and some $\th=\th_{f,n,w,m_0}\in(0,1)$. So, by \eqref{eq:f^ to0}, 
\begin{equation*}
(F-T)(n-1+w)\underset{n\to\infty}\longrightarrow0 	
\end{equation*}
for each real $w$, and so, by \eqref{eq:G}, 
\begin{equation}\label{eq:G_F-T to0}
G_{m,F-T}(n)\underset{n\to\infty}\longrightarrow0. 	
\end{equation}
It follows now from \eqref{eq:G-G=} and \eqref{eq:G=G} that $G_{m,F}(n)-G_{m_0,F}(n)\underset{n\to\infty}\longrightarrow0$. 
To complete the proof of \eqref{eq:series}, it remains to refer to \eqref{eq:sum-G=} and \eqref{eq:R to R}. 
As for the equalities \eqref{eq:G_m,alt1} and \eqref{eq:G_m,alt2}, their proof is quite similar to that of \eqref{eq:A_m,alt1} and \eqref{eq:A_m,alt2}. 
Proposition~\ref{prop:series} is now completely proved. 
\end{proof}

\begin{proof}[Proof of Proposition~\ref{prop:seriesEM}]
Take any natural $n$. Let 
\begin{equation}\label{eq:R^EM_{m,f}}
	R^\EM_{m,f}(n):=R^\EM_m, 
\end{equation}
with $R^\EM_m$ as defined in \eqref{eq:R^EM}.
Then, by \eqref{eq:REM unif}, 
\begin{equation}\label{eq:REM to REM}
	R^\EM_{m,f}(n)\underset{n\to\infty}\longrightarrow R^\EM_{m,f}(\infty) 
\end{equation}
and, by \eqref{eq:EM}--\eqref{eq:A^EM}, 
$
	\sum_{k=0}^{n-1} f(k)=f(0)+G^\EM_{m,F}(n)-G^\EM_{m,F}(1)+R^\EM_{m,f}(n).  
$ 
So, 
\begin{equation}\label{eq:sum-G^EM=}
	\sum_{k=0}^{n-1} f(k)-G^\EM_{m_0,F}(n)=f(0)+[G^\EM_{m,F}(n)-G^\EM_{m_0,F}(n)]-G^\EM_{m,F}(1)+R^\EM_{m,f}(n). 
\end{equation}
But 
\begin{equation*}
	G^\EM_{m,F}(n)-G^\EM_{m_0,F}(n)=\sum_{j=m_0}^{m-1}\frac{B_{2j}}{(2j)!}F^{(2j)}(n-1)
	=\sum_{j=m_0}^{m-1}\frac{B_{2j}}{(2j)!}f^{(2j-1)}(n-1)\underset{x\to\infty}\longrightarrow0
\end{equation*}
by \eqref{eq:f^^ to0}. 
To complete the proof of Proposition~\ref{prop:seriesEM}, it now remains to refer to \eqref{eq:sum-G^EM=} and \eqref{eq:REM to REM}. 
\qed 
\end{proof}

\begin{proof}[Proof of Proposition~\ref{prop:same}]
Let us first verify the last sentence of this proposition. Here it is enough to assume that $P(x)=P_\be(x):=(x-n)^\be$ for an arbitrary $\be=0,\dots,2m_0-1$. 
Then, by \eqref{eq:G} and Proposition~\ref{lem:=B_p},   
\begin{equation}\label{eq:=B}
	G_{m,P_\be}(n)=\sum_{j=1}^m\ga_{m,j}\sum_{i=0}^{j-1}(-1+j/2-i)^\be=B_\be, 
\end{equation}
whereas, in view of \eqref{eq:G^EM_m} and because $B_0=1$, $B_1=-1/2$, and $B_3=B_5=\dots=0$,  
\begin{multline*}
	G^\EM_{m,P_\be}(n)=(-1)^\be 
    + \frac\be2\,(-1)^{\be-1}  
    +
    \sum_{j=1}^{m-1} B_{2j} \binom\be{2j} (-1)^\be
    =\sum_{\al=0}^{2m-1} B_\al \binom\be\al (-1)^\be \\ 
    =\sum_{\al=0}^\be B_\al \binom\be\al (-1)^\be 
    =\Big(B_\be+\sum_{\al=0}^{\be-1} B_\al \binom\be\al\Big) (-1)^\be.   
\end{multline*}
So, if $\be\ne1$, then 
\begin{equation*}
	(-1)^\be[G^\EM_{m,P_\be}(n)-G_{m,P_\be}(n)]=[1-(-1)^\be]B_\be+\sum_{\al=0}^{\be-1} B_\al \binom\be\al
	=[1-(-1)^\be]B_\be,    
\end{equation*}
by a well-known identity for the Bernoulli numbers -- see e.g.\ \cite[formula~(2), page~229]{ireland-rosen}. 
If $\be$ is even, then $[1-(-1)^\be]B_\be=0$. If $\be=3,5,\dots$, then again $[1-(-1)^\be]B_\be=0$. So, $G^\EM_{m,P_\be}(n)=G_{m,P_\be}(n)$ for all $\be\in\{0,\dots,2m_0-1\}\setminus\{1\}$. 
Also, $G^\EM_{m,P_1}(n)=-1/2=B_1=G_{m,P_1}(n)$, by \eqref{eq:=B}. This completes the verification of the last sentence of Proposition~\ref{prop:same}. 

Let now $T=T_{n,m_0,F}$ be the Taylor polynomial of order $p=2m_0-1$ for the function $F$ at the point $n-1$, as in the proof of Proposition~\ref{prop:series}. Then, by \eqref{eq:G_F-T to0}, $G_{m_0,F-T}(n)\underset{n\to\infty}\longrightarrow0$. Also, 
by \eqref{eq:G^EM_m}, 
$G^\EM_{m_0,F-T}(n)
=0$. 
On the other hand, $T$ is a polynomial of degree $\le2m_0-1$, and so, by the already proved last sentence of Proposition~\ref{prop:same}, $G_{m_0,T}(n)=G^\EM_{m_0,T}(n)$. 
To complete the proof of Proposition~\ref{prop:same}, it remains to use the linearity of $G_{m,F}$ and $G^\EM_{m,F}$ in $F$, which yields $G_{m_0,F}(n)-G^\EM_{m_0,F}(n)=[G_{m_0,T}(n)-G^\EM_{m_0,T}(n)]+G_{m_0,F-T}(n)-G^\EM_{m_0,F-T}(n)
=G_{m_0,F-T}(n)\underset{n\to\infty}\longrightarrow0$. 
\qed
\end{proof}


\bibliographystyle{abbrv}      

\bibliography{P:/mtu_pCloud_02-02-17/bib_files/citations12.13.12}

\end{normalsize}

\end{document}